\crefname{equation}{}{}
\crefname{figure}{Figure}{Figure}
\title{Minimax Optimality of Classical Scaling\\ Under General Noise Conditions}
\author[1]{Siddharth Vishwanath} 
\author[1,2]{Ery Arias-Castro}
\affil[1]{\small Department of Mathematics, University of California, San Diego} 
\affil[2]{\small Halıcıoğlu Data Science Institute, University of California, San Diego}
\date{}
\begin{document}
\maketitle
\vspace*{-3em}
\begingroup
\let\clearpage\relax


\begin{abstract}
We establish the consistency of classical scaling under a broad class of noise models, encompassing many commonly studied cases in literature. Our approach requires only finite fourth moments of the noise, significantly weakening standard assumptions. We derive convergence rates for classical scaling and establish matching minimax lower bounds, demonstrating that classical scaling achieves minimax optimality in recovering the true configuration even when the input dissimilarities are corrupted by noise.
\end{abstract}



\section{Introduction}
\label{sec:introduction}

Given a matrix of dissimilarities, $\Del = (\del_{ij}) \in \R^{n \times n}$, which corresponds to the collection of pairwise dissimilarities between $n$ items, the objective of \textit{multidimensional scaling} is to represent the constituent items as an embedding $\qty{\h x_1, \h x_2, \cdots, \h x_n} \in \R^p$ in Euclidean space {for a given $p$}, such that the embedding {reproduces} the pairwise dissimilarities as closely as possible \citep{de198213}.  Multidimensional scaling plays an important role in unsupervised machine learning, and has found extensive applications across a range of diverse fields; we refer the reader to \cite{borg2007modern} and the references therein for a comprehensive overview.

In the classical multidimensional scaling setting, the matrix of dissimilarities, $\Del$, is assumed to be a \textit{Euclidean dissimilarity matrix} \citep{dattorro2010convex}, i.e., there exists a configuration $\qty{x_1, x_2, \cdots, x_n}\in \R^p$ such that $\del_{ij} := \norm{x_i - x_j}^2$. A classical result due to \cite{schoenberg1935remarks} establishes that $\Del$ is a Euclidean dissimilarity matrix if and only if the \textit{double centering} transformation of $\Del$, given by
\begin{align}
    \Delc := -\frac{1}{2}H\Del H, \qq{where} H = I - \frac{1}{n}\onev\onev\tr,
\end{align}
is positive {semi-}definite. Using $\X \in \R^{n \times p}$ to denote the matrix of {latent configurations} and $\Del\equiv \Del(\X)$ its associated Euclidean dissimilarity matrix, it is easy to see that $\Delc = \X\X\tr$. Schoenberg's result, essentially guarantees that the converse is true as well. \cite{gower1982euclidean} later notes that ``\emph{[Schoenberg's result] was a surprisingly late date for such a classical result in Euclidean geometry}''. This insight underpins the \textit{classical scaling algorithm} due to \cite{young1938discussion} and formalized by \cite{torgerson1952multidimensional,torgerson1958theory} and \cite{gower1966some}, wherein the embedding $\hX \in \Rnp$ is obtained by optimizing the following objective in the $\ell_2$-operator norm:
\begin{align}
    \hX = \argmin_{\Y \in \Rnp}\opnorm\big{ \Y\Y\tr - \Delc }.\label{eq:cs-objective}
\end{align}
In practice, the procedure for solving $\hX$ amounts to performing a spectral decomposition of the double centered matrix $\Delc$, as outlined in Algorithm~\ref{alg:cs}. Here, one can only hope to recover the true configuration up to an arbitrary rigid transformation. Indeed, for any rigid transformation $g \in \euc{p}$, $\Del(\X) = \Del( g(\X) )$. In other words, for $g(x) = Qx + w$ where $Q \in \orth{p}$ is an orthogonal matrix and $w \in \R^p$, it follows that $\norm{g(x_i) - g(x_j)}^2 = \norm{x_i - x_j}^2$, and the configuration $\qty{g(x_1), g(x_2), \cdots, g(x_n)}$ admits the same dissimilarity matrix $\Del$. It is, therefore, instructive to consider a loss function of the form
\begin{align}
    \loss(\hX, \X) = \min_{g \in \euc{p}} \norm\big{\hX - g(\X)}_\dagger,\label{eq:reconstruction-error}
\end{align}
as a metric to assess the quality of recovery in classical scaling, where $\norm{\cdot}_\dagger$ is a suitable norm on the space of $n \times p$ configuration matrices. The loss in \cref{eq:reconstruction-error} is often referred to as the \textit{reconstruction error} of the embedding. Some common choices for $\smallnorm{\hX - \X}_\dagger$ include the Frobenius norm, $\smallnorm{\hX - \X}_F$ and the $\ell\ttinft$-operator norm $\smallnorm{\hX - \X}\ttinft$. Respectively, these lead to the \textit{root mean-squared error loss} $\loss\rmse$ and the \textit{uniform loss} $\loss\ttinft$, given by
\begin{align}
    \loss\rmse(\hX, \X) = \qty(\min_{g \in \euc{p}} {\frac{1}{n}\sum_{i=1}^n\smallnorm{\hx_i - g(X_i)}^2})^{1/2} \qq{and} \loss\ttinft(\hX, \X) = \min_{g \in \euc{p}} \max_{i \in [n]}\smallnorm{\hx_i - g(X_i)}.\label{eq:losses}
\end{align}

\begin{algorithm}[t]
    \begin{minipage}{0.96\linewidth}
    \SetAlgoLined
    \caption{Classical Scaling}\label{alg:cs}
    \textbf{Input:} Dissimilarity matrix $D$ and embedding dimension $p$\\
    \textbf{Output:} Configuration $\hX = \mathsf{CS}(D, p)$
    \begin{algorithmic}[1]
        \State \textbf{Double centering of $D$:}
        $$
            \Dc = -\frac{1}{2} H D H, \qq{where} H = I - \frac{1}{n} J.
        $$
        \State\label{algline:cs-spectral-decomposition} \textbf{Spectral decomposition of $\Dc$:}
        \begin{align}
            \Dc = \hU \hL \hU\tr + \h{V}_\perp \hL_\perp \h{V}_\perp\tr,\label{eq:cs-spectral-decomposition}
        \end{align}
        where $\hL = \diag({\h\lambda_1, \h\lambda_2, \cdots, \h\lambda_p})$ is the $p\times p$ diagonal matrix of the top-$p$ eigenvalues of $\Dc$ and $\hU = [\h{u}_1\,\h{u}_2\,\cdots\,\h{u}_p]\tr \in \R^{n \times p}$ are the corresponding eigenvectors.
        \State \textbf{Spectral Embedding:}
        \begin{align}
            \hX = \hU \hL^{1/2}.\label{eq:hX}
        \end{align}
    \end{algorithmic}
\end{minipage}
\end{algorithm}

{If the latent configuration $\X \in \Rnp$ is in \textit{general position}\footnote{i.e., no subset of $(p+1)$ points lies within a $(p-1)$-dimensional affine subspace and $\rank(\X)=p$.}} and the observed dissimilarities are the pairwise distances between $\X$, then it is well known that the output of \cref{alg:cs} recovers the true configuration exactly (up to an arbitrary rigid transformation). In most practical applications, however, the observed dissimilarities are subject to measurement errors and/or corrupted by noise, and the observed dissimilarity matrix is given by
$$
D = \Del + \Eps.
$$ 
Despite its long history and widespread use, relatively little was known about the performance of classical scaling under perturbations and noise until recently. In this work, we aim to provide a detailed statistical analysis of classical scaling in the presence of noise.

\subsection{Related Work}
\label{sec:related-work}

\textit{Perturbation bounds}.\quad A detailed sensitivity analysis of classical scaling was first studied in a series of works by \cite{sibson1978studies,sibson1979studies,sibson1981studies}. For a small local perturbation {$\tilde{\Del} \asymp \Del + t\Phi$}, they show that {$\smallnorm{\tilde\X - \X}_2 \asymp C t \smallnorm{\Phi}_2$} via local first-order approximations. \cite{de2004sparse} subsequently extended the generality of these first order perturbations. More recently, \cite{arias2020perturbation} provide refined perturbation bounds and improve on earlier results in two ways. First, they show that the output of \cref{alg:cs} is stable with respect to arbitrary perturbations in the Schatten $r$-norm, i.e., ${\min_{g \in \euc{p}}\smallnorm{\tilde\X - g(\X)}_{r} \lesssim \smallnorm{\Delc(\tilde\X) - \Delc(\X)}_r^2}$. Second, by characterizing the stability of the orthogonal Procrustes problem, their  perturbation bounds explicitly characterize the arbitrary rigid transformation $g \in \euc{p}$ that minimizes the reconstruction error. As we shall see later on, this will play a key role in our analyses. However, the perturbation bounds in these works operate in the deterministic setting, and do not account for the randomness arising from $\X$ or the noise $\Eps$.

\textit{Noisy dissimilarities}.\quad When $\Eps$ is random, \cite{javanmard2013localization} study the problem of localization from noisy measurements, i.e., where the observed dissimilarity $D$ is both corrupted by noise and incomplete measurements. Their method considers a semidefinite relaxation of the objective in \cref{eq:cs-objective}, and establish conditions under which their solution recovers the true configuration with the $\loss\rmse$ loss. A similar problem is also considered by \cite{chatterjee2015matrix} in the context of matrix completion. Both of these works focus on the case where the noise is bounded, and provide guarantees on the reconstruction error when this bound is known \textit{a priori}. \cite{zhang2016distance} consider the case where the dissimilarities are completely observed but corrupted by \iid{} sub-Gaussian noise; in this setting (and, although the authors don't explicitly make the connection to the work of  \citealp{javanmard2013localization}), they show that the output from the same semidefinite relaxation is consistent in the $\loss\rmse$ loss.

\textit{Classical scaling with noisy dissimilarities.}\quad To the best of our knowledge, detailed statistical analyses of classical scaling in the presence of noise have only been considered recently. \cite{peterfreund2021multidimensional} and \cite{little2023analysis} consider the spiked model where: $d_{ij} = \norm{\tilde{x}_i - \tilde{x}_j}^2$ for ${x}'_i = x_i + z_i$ where $z_i$ are some (possibly high dimensional) noise. In other words, { the dissimilarities are the Euclidean distances associated with a noisy (low-dimensional) configuration} $\tilde{\X} = \X + \mathrm{Z}$. 
In the high-dimensional regime, and when the eigenvalues used in classical scaling are appropriately thresholded, \cite{peterfreund2021multidimensional} establish consistency in the $\loss\rmse$ loss. In a similar vein, \cite{little2023analysis} shed light on the signal-to-noise thresholds for optimal clustering using the output of classical scaling in the high-dimensional regime by establishing the convergence rates in $\loss_{\max}$ ({{where the matrix $\norm{\cdot}_{\max}$-norm is used in the reconstruction error from \cref{eq:reconstruction-error}}}). Notably, in both of these works, the noise is assumed to manifest in the configuration which generates the dissimilarities and not in the observed dissimilarities themselves. To this end, and, perhaps most relevant to the present work, \cite{li2020central} provide the first detailed analysis of the consistency of classical scaling when $D = \Del + \Eps$; they consider three types of noise models---all of which, in some way, rely on the noise being \iid{} and sub-Gaussian. Under these assumptions and for an appropriate rigid transformation $g \in \euc{p}$, they show that in the $\loss\avg$ loss, the output of classical scaling is $\sqrt{n}$-consistent. This allows them to establish a central limit theorem for the output of classical scaling: the embedding produced by classical scaling asymptotically converges to a Gaussian distribution centered on $g(\X)$---the true configuration after a rigid transformation. These contributions form the main backdrop for the present work. 

\subsection{Contributions}
\label{sec:contributions}

While these results described in \cref{sec:related-work} establish consistency of classical scaling, it is unclear whether their conclusions only hold for the specific type of noise models they consider, or if they hold more generally. Moreover, it is not known if the output of classical scaling is optimal in recovering the true configuration in the presence of noise. Building upon the existing literature, our work makes the following key contributions:

\begin{enumerate}[label=(\roman*)]
    \item We consider a flexible noise framework where the noise terms $\eps_{ij}$ in the observed dissimilarity matrix $D$ can be expressed as $\eps_{ij} = \Psi(\del_{ij}, \xi_{ij})$ for some function $\Psi$, with $\xi_{ij}$ random and independent of $\del_{ij}$ (\cref{noisy-setting}). This allows for cases where $\eps_{ij}$ are neither independent nor identical, and encompasses noise models previously considered in literature (see \cref{tab:noise-models}) and others. For instance, this includes multiplicative noise models of the form $\eps_{ij} = (1+\del_{ij}) \xi_{ij}$ for some $\xi_{ij}$ independent of $\del_{ij}$. Importantly, $\eps_{ij}$ is allowed to be unbounded, heavy-tailed and depend on $\del_{ij}$, requiring only finite moments up to order $q > 4$.
    \item Under the general noise framework, we establish consistency for the output of classical scaling from the noisy dissimilarity matrices, and establish their convergence rates in the $\ell_2$-operator norm (\cref{thm:X-opnorm}), the root mean-squared loss $\loss\rmse$ (\cref{cor:frobenius-norm}), and uniform convergence under the $\loss\ttinft$ loss (\cref{thm:X-ttinf}). The results hold for both fixed and random configurations $\X$---where the rows of $\X$ are sampled \iid{} from a sufficiently regular distribution (\cref{cor:iid}). In relation to previous work, the results make use of probabilistic symmetrization along with truncation to handle the heavy-tailed noise; see \cref{sec:consistency} for further discussion.
    \item We establish matching lower bounds on the minimax risk of estimating the configuration~$\X$ up to rigid transformation from noisy dissimilarities under the $\loss\rmse$ loss (\cref{thm:minimax-frobenius}) and under the $\loss\ttinft$ loss (\cref{thm:minimax-ttinf}). This shows that the output of classical scaling is minimax optimal in recovering the true configuration even when the input dissimilarities are subject to noise.
\end{enumerate}

\noindent With this background, the following result is a corollary
of more general results that are established in the paper. 

\begingroup
\renewcommand*{\thecorollary}{\!\!\!\:}
\begin{corollary}
    For fixed $p$, suppose the rows of $\X \in \Rnp$ are sampled \iid{} from a probability distribution $F$ with $\diam(\supp(F)) \le \Rx$ and $\frac{1}{\kappa^2}I_p \preccurlyeq \cov(F) \preccurlyeq \kappa^2 I_p$ where $\kappa, \Rx > 0$ and let $\Del$ be the pairwise Euclidean dissimilarity matrix. Let $D = \Del + \Eps$ be the observed dissimilarity matrix, where the noise $\eps_{ij}$ satisfies $\E[|\eps_{ij}|^q \mid \del_{ij}] \le \sigma^q$ for $q > 4$ in addition to some mild regularity conditions described in \ref{assumption:independence}--\ref{assumption:moments}. Let $\hX = \cs{D, p}$ be the configuration obtained using \cref{alg:cs}. Then, there exists $\enn(q, \sigma, \xpar)$ such that for all $n > \enn(q, \sigma, \xpar)$ and with high probability,
    \begin{align}
        \loss\rmse(\hX, \X) \asymp \frac{\sigma\kappa}{\sqrt{n}} \qq{and} \loss\ttinft(\hX, \X) \asymp \boldsymbol{c}(\kappa, \Rx) \sigma \sqrt{\frac{\log{n}}{{n}}},\label{eq:main-result}
    \end{align}
    where $\boldsymbol{c}(\kappa, \Rx)$ is a constant depending only on $\kappa, \Rx$.
\end{corollary}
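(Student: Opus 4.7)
The plan is to read the corollary as an assembly of the more general results quoted in the contributions: the upper bounds in \cref{eq:main-result} follow from the deterministic convergence rates \cref{thm:X-opnorm}, \cref{cor:frobenius-norm} and \cref{thm:X-ttinf}, specialized to the i.i.d.\ configuration regime via \cref{cor:iid}, while the matching lower bounds come from the minimax theorems \cref{thm:minimax-frobenius} and \cref{thm:minimax-ttinf}. The technical work is confined to verifying that an i.i.d.\ draw of $\X$ lands in the deterministic class under which those theorems apply.

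As a preparatory step, I would use matrix Bernstein/Chernoff for bounded random vectors to show that, under $\diam(\supp(F))\le \Rx$ and $\kappa^{-2} I_p \preccurlyeq \cov(F) \preccurlyeq \kappa^2 I_p$, there is an event of probability at least $1-n^{-c}$ on which
\[
\tfrac{1}{2\kappa^2} I_p \;\preccurlyeq\; \tfrac{1}{n}\X\tr\X \;\preccurlyeq\; 2\kappa^2 I_p
\]
once $n \ge \enn(q,\sigma,\xpar)$. This forces $\X$ into general position, yields $\lambda_p(\Delc) \gtrsim n/\kappa^2$, and supplies the row-wise regularity (a quantitative leverage-score/incoherence bound on the eigenvectors of $\Delc$) required by \cref{thm:X-ttinf}. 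All subsequent steps are carried out on this event.

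For the upper bounds, \cref{thm:X-opnorm} reduces the problem to a spectral-norm bound on $H\Eps H$. The assumption $\E[|\eps_{ij}|^q \mid \del_{ij}]\le \sigma^q$ with $q>4$ is precisely what allows a probabilistic symmetrization plus truncation argument to yield $\opnorm{H\Eps H} \lesssim \sigma\sqrt{n}$ on a further event of high probability. Dividing by $\lambda_p(\Delc)\gtrsim n/\kappa^2$ through the Procrustes-aware perturbation bound of \cite{arias2020perturbation} (the engine behind \cref{cor:frobenius-norm}) produces $\loss\rmse(\hX,\X) \lesssim \sigma\kappa/\sqrt{n}$. The uniform bound from \cref{thm:X-ttinf} then proceeds row-wise: controlling $\max_i \norm{e_i\tr H\Eps H}$ via a union bound over $n$ rows contributes the extra $\sqrt{\log n}$, while $\Rx$ and $\kappa$ enter the constant $\boldsymbol{c}(\kappa,\Rx)$, giving $\loss\ttinft(\hX,\X) \lesssim \boldsymbol{c}(\kappa,\Rx)\sigma\sqrt{\log n/n}$. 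For the matching lower bounds I would invoke \cref{thm:minimax-frobenius} and \cref{thm:minimax-ttinf}: since $F$ has non-degenerate covariance and a support of positive $p$-dimensional content, the two-point/Fano-type perturbations used in those proofs can be realized inside $\supp(F)$, which places the i.i.d.\ model inside the minimax parameter class and transfers those rates across as lower bounds for $\loss\rmse$ and $\loss\ttinft$ respectively.

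The delicate step is the preparation: checking that the randomness of $\X$ does not degrade the worst-case deterministic bounds. In particular, the row-wise regularity behind \cref{thm:X-ttinf} has to be shown uniformly over all $n$ rows of a random $\X$, and this event must be combined with the event controlling $H\Eps H$. Once both events are secured --- each with probability at least $1-n^{-c}$ --- the remainder of the argument is essentially bookkeeping through the quoted theorems.
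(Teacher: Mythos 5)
Your proposal is correct and follows essentially the same route as the paper: the preparatory concentration step is exactly the paper's \cref{lemma:iid} (with \cref{lemma:noise-models} supplying \ref{assumption:expectation} in the i.i.d.\ setting), the upper bounds are the specialization \cref{cor:iid} of \cref{thm:X-opnorm}, \cref{cor:frobenius-norm} and \cref{thm:X-ttinf}, and the matching rates come from \cref{thm:minimax-frobenius} and \cref{thm:minimax-ttinf}, which is precisely how the paper intends the $\asymp$ (rate-matching in the minimax sense rather than a pointwise high-probability lower bound for the particular draw of $\X$).
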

\endgroup

The remainder of the paper is organized as follows. In \cref{sec:background}, we describe the setting we consider in this work and introduce the main assumptions. In \cref{sec:consistency}, we derive the convergence rates for classical scaling under noise. \cref{sec:lower-bound} establishes the minimax lower bounds for the problem of recovering the true configuration from noisy dissimilarities. In the interest of clarity, we defer the proofs of the main results to \cref{sec:proofs}. Auxiliary results and proofs to more technical lemmas are collected in \cref{sec:auxiliary,sec:toolkit}.

\textbf{Notation.}\quad We use $\onev_k, \zerov_k, e^k_i \in \R^k$ to denote the vector of ones, the vector of zeros, and the $i$th canonical basis vector, respectively; and $I_k, J_k, \O_k$ to denote the $k \times k$ identity matrix, matrix of all ones, and matrix of all zeros, respectively. In most cases, we suppress the subscript $k$ when the dimension is clear from the context. For a vector $x \in \Rp$, $\norm{x}$ denotes the $\ell_2$-norm of $x$, and for a matrix $A \in \Rnp$, 
$$
\opnorm{A}\!\!:= \max_{\norm{x}=1}\norm{Ax}
,\quad 
\ttinf{A}\!\!:= \max_{\norm{x}=1}\norm\big{Ax}_\infty
,\quad
\norm{A}_\infty\!\!:= \max_{\norm{x}_\infty=1}\norm\big{Ax}_\infty
\qc{\text{and }} 
\norm{A}_F\!\!:= \sqrt{\trace({A\tr A})},
$$ 
denote the $\ell_2$-operator norm, the $\ell_{2\to\infty}$-operator norm, the $\ell_{\infty}$-operator norm, and the Frobenius norm of $A$, respectively. $H := I - J/n$ denotes the centering matrix, and $A\pinv$ denotes the Moore-Penrose pseudoinverse of a full rank matrix $A$. For two matrices $A, B \in \R^{n \times p}$, $A \circ B$ denotes the Hadamard product of $A$ and $B$. For $A \in \R^{m \times k}$, $s_1(A) \ge \cdots \ge s_{{m \wedge k}}(A)$ are the singular values of $A$ and for a square matrix $B \in \R^{k \times k}$, $\lambda_1(B) \ge \cdots \ge \lambda_k(B)$ are its eigenvalues. Lastly, $\orth{k} := \qty{Q \in \R^{k \times k}: QQ\tr = Q\tr Q = I_k}$ denotes the group of orthogonal matrices, and $\euc{k} := \qty{g: g(x) = Qx + a,\, Q \in \orth{k},\, a \in \R^k}$ denotes the group of rigid transformations in $\R^k$.

In what follows, we use standard Bachmann-Landau notation; we write {$a_n = O(b_n)$ and $a_n \lesssim b_n$ (equivalently, $b_n = \Omega(a_n)$ and $b_n \gtrsim a_n$)}, if there exists a constant $C > 0$ such that $\abs{a_n} \le C\abs{b_n}$ for all $n > N_C$, and $b_n = o(a_n)$ (equivalently, $b_n \ll a_n$) if $\lim_{n} |b_n/a_n| = 0$. \cref{tab:notation} in \cref{sec:toolkit} provides a summary of the notation used in this work.



\section{{Setting}}\label{sec:background}

Given an input dissimilarity matrix, the objective of classical multidimensional scaling is to recover a configuration in $\R^p$ which faithfully reproduces the pairwise dissimilarities in the original input. As noted in \Cref{sec:introduction}, the general setting assumes that the input dissimilarities arise from a realizable configuration.
\begin{definition}[Realizable setting]
    The dissimilarity matrix $\Del = (\del_{ij}) \in \R^{n \times n}$ is realizable if there exists a latent configuration $\X \in \Rnp$, such that $\del_{ij} = \norm{X_i - X_j}^2$ for all $i, j \in [n]$, i.e., 
    \begin{align}
    \Del(\X) = \diag(\X\X\tr)\onev\tr + \onev \diag(\X\X\tr)\tr - 2\X\X\tr.\label{eq:Del-matrix-form}
\end{align}
\end{definition}
In the realizable setting, it is well known that the classical scaling algorithm exactly recovers the true configuration up to an arbitrary rigid transformation, i.e., for $\hX = \cs{\Del, p}$ from \cref{alg:cs}, there exists $Q \in \orth{p}$ such that $\hX = (H\X) Q$. Throughout, we consider the setting where the dimension $p$ is fixed and known. In practice, however, the choice of the latent dimension, $\h{p}$, can be chosen reliably using various methods, e.g., a screeplot of the eigenvalues of $\Dc$, or other thresholding methods. Moreover, the assumption that $p < n$ is encodes the hypothesis that the true ``signal'' we hope to recover is low-dimensional, and stays true to the spirit of classical multidimensional scaling. We refer the reader to \cite{peterfreund2021multidimensional} for results on classical scaling in the high-dimensional regime. In the noisy realizable setting, we assume that the observed dissimilarities are subject to noise.

\begin{definition}[Noisy realizable setting]\label{noisy-setting}
    The observed dissimilarities are of the form 
    \begin{align}
        d_{ij} = \del_{ij} + \eps_{ij}, \qq{for } \eps_{ij} = \Psi(\del_{ij}, \xi_{ij}),
    \end{align}
    where $\xi_{ij}$ is random and $\Psi: \R \times \R \to \R$ is a noise model satisfying\footnote{The condition $\Psi(0, 0) = 0$ ensures that the resulting $\Eps = (\eps_{ij})$ is a hollow matrix.} $\Psi(0, 0) = 0$. We let $D = (d_{ij})$, $\Del = (\delta_{ij})$, $\Eps = (\eps_{ij})$, and $\Xi = (\xi_{ij})$. In particular, $D = \Del + \Eps$.
\end{definition}

It is easy to see that when $\Xi = (\xi_{ij})$ is an $n \times n$ symmetric hollow matrix, then the observed dissimilarity matrix, $D$, is also symmetric and hollow. The generality of $\Psi$ allows us to consider a wide range of noise models. In the noisy realizable setting, we are interested in understanding how well $\hX = \cs{D, p}$ recovers the configuration $\X$ {(up to a rigid transformation)} in the presence of noise. 

\textbf{Assumptions.}\quad In our analyses, we make the following assumptions on the latent configuration $\X$, the noise $\Eps$, and the noise model $\Psi$. 

\begin{enumerate}[label=(\textbf{A}$_{\arabic*}$), ref=\textup{(\textbf{A}$_{\arabic*}$)}]
    \item\label{assumption:compact} For $\Rx > 0$ and $\kappa > 1$, the configuration matrix $\X \in \bbX(\kappa, \Rx) \subset \Rnp$ where
    \begin{align}
    \bbX(\kappa, \Rx) := \Bigg\{\X \in \Rnp:
        \frac{1}{\kappa} \le s_p\qty(\frac{HX}{\sqrt{n}}) \le s_1\qty(\frac{HX}{\sqrt{n}}) \le \kappa \qq{and} \ttinf{H\X} \le \Rx\Bigg\},\label{eq:bbX}
    \end{align}
    \item\label{assumption:independence} $\Xi = (\xi_{ij})$ is a random, symmetric hollow matrix with \iid{} entries and $\gamma := \E(\xi_{ij})$.
    \item\label{assumption:moments} There exists $\sigma > 0$ and $q > 4$, such that 
    $
    \qty\big[\E(\abs{\eps_{ij}}^q | \del_{ij})]^{1/q} \le \sigma \,\text{ for all }\, i, j \in [n].
    $
    \item\label{assumption:expectation} The noise model $\Eps = \Psi(\Del, \Xi)$ and the configuration $\X \in \Rnp$ are such that 
    $$
    \norm{H\,\E(\Eps)\,H}_{\infty} \lesssim \sqrt{n}.
    $$
\end{enumerate}

\begin{table}[t!]
    \centering\small
    \caption{Noise models for the observed dissimilarity matrix $D$. The last two columns describe the sufficient conditions on $\Xi = (\xi_{ij})$ in order for \ref{assumption:expectation} to hold when $\X$ is fixed or random with \iid{} rows.}
        \begin{tabular}{llc|cc}
        \toprule
        \textbf{Setting} & \textbf{Noise Model} & $\Eps=\Psi(\Del, \Xi)$ & Fixed $\X$ & Random $\X$ \\
        \toprule
        {1}. Additive & $\phantom{\sqrt{}}d_{ij} = \del_{ij} + \xi_{ij}$ & $\Xi$  & $\E(\xi_{ij})=\gamma$ & $\E(\xi_{ij})=\gamma$ \\[5pt]
        {2}. Multiplicative & $\phantom{\sqrt{}}d_{ij} = \del_{ij}(1 + \xi_{ij})$ & $\Del \circ \Xi$ & $\E(\xi_{ij})=0$ & $\E(\xi_{ij})=\gamma$ \\[5pt]
        {3}. Absolute Additive & $\sqrt{d}_{ij} = \abs\big{\sqrt \del_{ij} + \xi_{ij}}$  & $\Xi\sq + 2 \sqrt\Del \circ \Xi$  & $\E(\xi_{ij})=0$ & $\E(\xi_{ij})=\gamma$ \\[5pt]
        {4}. Absolute Multiplicative & $\sqrt{d}_{ij} = \abs\big{\sqrt \del_{ij} (1 + \xi_{ij})}$  & $\Del \circ (2\Xi + \Xi\sq)$ & $\cross$ & $\E(\xi_{ij})=\gamma$ \\[5pt]
        {5}. Thresholded Additive & $\phantom{\sqrt{}}d_{ij} = \max\qty{0, \del_{ij} + \xi_{ij}}$ & $\max\qty{-\Del, \Xi}$ & $\cross$ & $\E(\xi_{ij})=\gamma$ \\[5pt]
        {6}. Thresholded Multiplicative & $\phantom{\sqrt{}}d_{ij} = \max\qty{0, \del_{ij}(1 + \eps_{ij})}$ & $\Del \circ \max\qty{-J, \Xi}$ & $\cross$ & $\E(\xi_{ij})=\gamma$ \\
        \bottomrule
        \end{tabular}
    \label{tab:noise-models}
\end{table}

{
We note that the conditions on $\X$ and $\Psi$ here are rather mild. For starters, assumption~\ref{assumption:compact} is standard in literature, and requires the rows of $\X$ to be bounded, and for the sample covariance matrix of $\X$ to have bounded eigenvalues. In particular, from standard matrix concentration bounds \citep[e.g.,][]{vershynin2018high}, assumption~\ref{assumption:compact} is satisfied with high probability if the rows of $\X$ are sampled \iid{} from a sufficiently regular probability distribution on $\Rp$, as described in the following lemma. The proof can be found in \cref{proof:lemma:iid}.

\begin{lemma}\label{lemma:iid}
    Let $\F(\xpar)$ be a collection of probability distributions on $\Rp$ given by
    \begin{align}
        \F(\xpar) := \qty{ F : \diam(\supp(F)) \le \Rx \qq{and} \frac{1}{\kappa^2}I_p \preccurlyeq \cov(F) \preccurlyeq \kappa^2 I_p },\label{eq:F}
    \end{align}
    and let $\alpha_n := \kappa^2\Rx^2\sqrt{\log{n}/n}$. If the rows of $\X \in \Rnp$ are observed \iid{} from $F \in \F(\kappa, \Rx)$, then there exists $\enn_0(\kappa, \Rx)$ such that for all $n > \enn_0(\kappa, \Rx)$,
    $$
    \X \in \bbX\qty\Big( \kappa\qty\big(1 + O(\alpha_n) ), \Rx)
    $$
    with probability greater than $1 - O(n^{-2})$.
\end{lemma}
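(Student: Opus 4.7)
The plan is to verify separately the two defining conditions of $\bbX(\kappa(1+O(\alpha_n)),\Rx)$: the row bound $\ttinf{H\X}\le\Rx$ and the bounds on the extreme singular values of $H\X/\sqrt{n}$.

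The row bound holds almost surely and requires no probabilistic argument. The $i$-th row of $H\X$ is $X_i-\bar X$ where $\bar X=\frac{1}{n}\sum_j X_j$ lies in the convex hull of $\supp(F)$; since $\diam(\supp(F))\le\Rx$, we have $\smallnorm{X_i-\bar X}\le\Rx$ deterministically for every $i\in[n]$.

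For the singular values, set $\Sigma:=\cov(F)$, $\mu:=\E_F[X]$, $\hat\Sigma:=\frac{1}{n}\sum_i (X_i-\mu)(X_i-\mu)\tr$, and $\tilde\Sigma:=\frac{1}{n}(H\X)\tr(H\X)$. A direct expansion yields $\tilde\Sigma=\hat\Sigma-(\bar X-\mu)(\bar X-\mu)\tr$, so $\opnorm{\tilde\Sigma-\Sigma}\le\opnorm{\hat\Sigma-\Sigma}+\smallnorm{\bar X-\mu}^2$. The second term is $O(\Rx^2\log n / n)$ with probability $1-O(n^{-2})$ by a vector Hoeffding bound (using $\smallnorm{X-\mu}\le\Rx$). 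For the first term, apply matrix Bernstein to the \iid, zero-mean summands $Z_i:=(X_i-\mu)(X_i-\mu)\tr-\Sigma$, which satisfy $\opnorm{Z_i}\le 2\Rx^2$ and have matrix variance $\opnorm{\E[Z_i^2]}\le\Rx^2\opnorm{\Sigma}\le\kappa^2\Rx^2$; this yields $\opnorm{\hat\Sigma-\Sigma}\lesssim\kappa\Rx\sqrt{\log n/n}$ with probability $1-O(n^{-2})$ once $n$ is large enough to be in the sub-Gaussian branch of Bernstein. Combining the two estimates gives $\opnorm{\tilde\Sigma-\Sigma}=O(\alpha_n)$ with constants absorbed into fixed powers of $\kappa$ and $\Rx$.

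Weyl's inequality then transfers this to the spectrum: $\frac{1}{\kappa^2}\qty(1-O(\alpha_n))\le\lambda_p(\tilde\Sigma)\le\lambda_1(\tilde\Sigma)\le\kappa^2\qty(1+O(\alpha_n))$. Using the identity $s_j(H\X/\sqrt n)=\sqrt{\lambda_j(\tilde\Sigma)}$ together with $\sqrt{1\pm t}=1\pm O(t)$ for small $t$ yields $s_p(H\X/\sqrt n)\ge[\kappa(1+O(\alpha_n))]^{-1}$ and $s_1(H\X/\sqrt n)\le\kappa(1+O(\alpha_n))$, which, combined with the deterministic row bound, places $\X$ in $\bbX(\kappa(1+O(\alpha_n)),\Rx)$. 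The only mild obstacle is choosing $\enn_0(\kappa,\Rx)$ so that all the $O(\alpha_n)$ expansions remain valid simultaneously, matrix Bernstein operates in its sub-Gaussian regime, and the $\Rx^2\log n/n$ correction from $\smallnorm{\bar X-\mu}^2$ is absorbed into the dominant $\sqrt{\log n/n}$ rate---bookkeeping rather than any substantive difficulty.
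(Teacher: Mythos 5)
Your proof is correct, and it reaches the same conclusion through genuinely different tools than the paper. The paper also splits the argument into the deterministic row bound plus covariance concentration, but for the spectrum it invokes off-the-shelf results twice: Vershynin's sub-Gaussian covariance estimation bound for $\widehat\Sigma=\frac{1}{n}(H\X)\tr(H\X)$ to control $s_1$, and a separate relative concentration result for the \emph{inverse} covariance (Kereta--Klock, Corollary~11) to lower bound $s_p$, with the empirical centering absorbed into the citation. You instead center at the population mean, use the exact identity $\frac{1}{n}(H\X)\tr(H\X)=\widehat\Sigma_\mu-(\bar X-\mu)(\bar X-\mu)\tr$, control $\|\widehat\Sigma_\mu-\Sigma\|$ by matrix Bernstein (the bounds $\|Z_i\|\le 2\Rx^2$ and $\|\mathbb{E}Z_i^2\|\le \Rx^2\|\Sigma\|\le\kappa^2\Rx^2$ are right), kill the mean-correction term with a vector Hoeffding bound, and then get both extreme singular values from a single application of Weyl. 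This is more elementary and self-contained, and it treats the empirical-mean centering more explicitly than the paper does. The price is in the constants for the lower tail: Weyl bounds $\lambda_p(\widetilde\Sigma)\ge \kappa^{-2}-O(\kappa\Rx\sqrt{\log n/n})$, so after normalizing by $\lambda_p(\Sigma)=\kappa^{-2}$ the relative error is $O(\kappa^3\Rx\sqrt{\log n/n})$, which can exceed $\alpha_n=\kappa^2\Rx^2\sqrt{\log n/n}$ by a factor $\kappa/\Rx$ when $\Rx\ll\kappa$; the paper's inverse-covariance route keeps the relative error at $O(\alpha_n)$ with absolute constants. Since $\kappa,\Rx$ are fixed and the lemma is only used through statements of the form $\X\in\bbX(\kappa(1+o(1)),\Rx)$ for $n>\enn_0(\kappa,\Rx)$, this is a bookkeeping discrepancy (which you flag) rather than a gap, but if you want to match the stated $O(\alpha_n)$ exactly you would need a relative (rather than additive) perturbation argument for the smallest eigenvalue, as in the paper.
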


Assumption \ref{assumption:independence}, on the other hand, imposes restrictions on the dependence structure between the noise $\Eps$ and the latent configuration $\X$. In particular, conditional on $\Del$, the elements of $\Eps$ are independent, and rules out pathological cases, e.g., $\Eps = -\Del$ where the observed dissimilarity matrix becomes $\O$. In contrast to \cite{li2020central} and \cite{little2023analysis}---where the errors $\eps_{ij}$ are assumed to be sub-Gaussian---assumption~{\ref{assumption:moments}} substantially relaxes the assumptions on $\eps_{ij}$, and only require finite moments up to order $q > 4$. Similar relaxations to noise models have appeared in recent literature on supervised learning (\citealp[e.g.,][]{kuchibhotla2022least,kuchibhotla2022moving,bakhshizadeh2023sharp}). Unlike these works, however, we require at least $q > 4$ moments, which is necessary for controlling the norm of the random matrices (\citealp{bai1988note}, \citealp{silverstein1989weak}). In particular, \cite{bai1988note} show that if $\Eps = (\eps_{ij})$ is a random matrix with mean-zero \iid{} entries and $\E(\eps_{ij}^4) = \infty$, then $\opnorm{\Eps} = \infty$ a.s. 

Lastly, assumption~\ref{assumption:expectation} imposes a restriction on the noise model $\Psi$ depending on the nature of the configuration $\X$. The following lemma highlights two key settings where \ref{assumption:expectation} holds.

\begin{lemma}\label{lemma:noise-models}
    For $D = \Del(\X) + \Eps$ in the noisy realizable setting of \cref{noisy-setting}, assumption \ref{assumption:expectation} is met if any of the following conditions hold:
    \begin{enumerate}[label=\textup{(\roman*)}]
        \item\label{lem:noise-1} The rows $X_1, X_2, \dots, X_n \in \Rp$ of $\X \in \Rnp$ are observed \iid{} from $F \in \F(\xpar)$.
        \item\label{lem:noise-2} $\Xi = (\xi_{ij})$ are independent, symmetric, mean-zero random variables, and for any $K, L > 0$ there exist a collection bounded functions $g_1, \dots, g_{K}$ is such that
        $$
        \Psi(\del_{ij}, \xi_{ij}) = \sum_{k=1}^K g_k(\del_{ij})\xi_{ij}^{2k+1} + \sum_{\ell=1}^{L} \xi_{ij}^{\ell}.
        $$
    \end{enumerate}
\end{lemma}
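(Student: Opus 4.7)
The plan is to reduce the claim to showing that $\E(\Eps) = c(J - I)$ for some scalar $c$ with $|c| \le \sigma$; once this is in hand, the conclusion follows from elementary identities involving the centering matrix $H$.

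For the reduction, observe that $\Eps$ is hollow, since $\del_{ii} = \xi_{ii} = 0$ and $\Psi(0,0) = 0$, so $\E(\Eps)$ has zero diagonal. If moreover $\E(\eps_{ij})$ takes a common value $c$ for every $i \neq j$, then $\E(\Eps) = c(J - I)$. Using $H\onev = 0$ (so $HJH = 0$) together with $H^2 = H$, this gives $H\, \E(\Eps)\, H = -cH$. Since $H$ has maximum absolute row sum equal to $2(n-1)/n \le 2$, we get $\norm{H\, \E(\Eps)\, H}_\infty \le 2|c|$. By Jensen's inequality and \ref{assumption:moments}, $|c| = |\E(\eps_{ij})| \le (\E|\eps_{ij}|^q)^{1/q} \le \sigma$, so $\norm{H\, \E(\Eps)\, H}_\infty \le 2\sigma = O(1)$, which is certainly $\lesssim \sqrt{n}$.

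It therefore suffices to verify that $\E(\eps_{ij})$ is constant across $i \neq j$ under either sufficient condition. Let $\phi(\del) := \E[\Psi(\del, \xi)]$, where $\xi$ is distributed as a common off-diagonal entry of $\Xi$, so that $\E(\eps_{ij} \mid \X) = \phi(\del_{ij})$ for $i \neq j$. In case \ref{lem:noise-1}, the iid-ness of the rows of $\X$ makes the pairs $(X_i, X_j)$ with $i \neq j$ identically distributed, so $(\del_{ij})_{i \neq j}$ are identically distributed, and $\E(\eps_{ij}) = \E[\phi(\del_{ij})]$ takes the same value for every $i \neq j$. In case \ref{lem:noise-2}, the symmetry of $\xi$ kills every odd moment, hence
\begin{align*}
\phi(\del) = \sum_{k=1}^{K} g_k(\del)\, \E(\xi^{2k+1}) + \sum_{\ell=1}^{L} \E(\xi^\ell) = \sum_{\substack{1 \le \ell \le L \\ \ell \text{ even}}} \E(\xi^\ell),
\end{align*}
which does not depend on $\del$, and so again $\E(\eps_{ij})$ is constant in $(i, j)$.

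There is no real obstacle here beyond bookkeeping: the content of the lemma is that either the exchangeability of the pairs $(X_i, X_j)$ in case \ref{lem:noise-1}, or the parity structure of $\Psi$ combined with the symmetric law of $\xi$ in case \ref{lem:noise-2}, forces $\E(\Eps)$ into the form $c(J - I)$ that is essentially annihilated by double centering.
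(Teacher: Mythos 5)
Your proposal is correct and follows essentially the same route as the paper: reduce to $\E(\Eps)=c(J-I)$, use $H(J-I)H=-H$ and $\norm{H}_\infty\le 2$, and verify constancy of $\E(\eps_{ij})$ via exchangeability of $(\del_{ij})_{i\neq j}$ in case \ref{lem:noise-1} and vanishing odd moments of the symmetric $\xi_{ij}$ in case \ref{lem:noise-2}. The extra observations (bounding $|c|\le\sigma$ by Jensen and \ref{assumption:moments}, and noting that odd-power terms of the second sum also vanish) are harmless refinements beyond what the paper records.
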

The proof of \cref{lemma:noise-models} is provided in \cref{proof:lemma:noise-models}. Some examples of noise models satisfying these conditions are listed in \Cref{tab:noise-models}. The additive and absolute additive models in Settings~1~and~3 correspond to the noise models in \citet[Sections~2.1~\textit{\&}~2.2]{li2020central}. In \cref{sec:consistency}, we analyze the performance of classical scaling under these assumptions.
}



\section{Performance Analysis of Classical Scaling}
\label{sec:consistency}

In the \textit{realizable setting}, given a pairwise dissimilarity matrix $\Del$ generated from $\X$, the classical scaling algorithm begins by double centering $\Del$. Schoenberg's result establishes that
\begin{align}
    \Delc  = -\frac{1}{2}H\Del H = (H\X)(H\X)\tr
\end{align}
is positive semi-definite. This can be easily seen from \cref{eq:Del-matrix-form} by noting that  $\onev$ is in the nullspace of $H$, and that the Gram matrix $(H\X)(H\X)\tr$ is always positive semi-definite. Moreover, $\Delc$ is a rank-$p$ matrix, and shares the same eigenvalues as the sample covariance matrix $(H\X)\tr(H\X)$. Therefore, $\Lambda_\perp = \O$, and, up to an arbitrary orthogonal transformation $Q \in \orth{p}$, 
\begin{align}
    H\X Q\tr = U\Lambda^{1/2},\label{eq:cs-exact-recovery}
\end{align}
where $U\Lambda U\tr$ is rank-$p$ component from the spectral decomposition of $\Delc$; see \cref{eq:cs-spectral-decomposition} from \cref{algline:cs-spectral-decomposition} of \cref{alg:cs}. By noting that $\cs{\Del, p}$ is exactly the term on the r.h.s., it is easy to see that for $\bx = \X\tr\onev_n/n$ and the rigid transformation $g(v) = Q(v - \bx)$, the classical scaling algorithm outputs $\hX = g(\X)$. When $D = \Del + \Eps$ in the \textit{noisy realizable setting}, the double centered matrix $\Dc$ can be expressed as
\begin{align}\label{eq:Dc}
    \Dc = \Delc - \half H\Eps H.
\end{align}
By letting $\hL$ denote the diagonal matrix of the top-$p$ nonnegative eigenvalues of $\Dc$, and $\hU$ to denote the corresponding eigenvectors, we have $\Dc = \hU \hL \hU\tr + \h V_\perp \hL_\perp \h V_\perp$ as per \cref{eq:cs-spectral-decomposition} of \cref{alg:cs}, and $\hX = \cs{D, p} = \hU \hL^{1/2}$. In order to prove the consistency of classical scaling in the noisy realizable setting, we need a handle on the deviation of $\Dc$ from $\Delc$ in the $\ell_2$-operator norm, which is described in the following result.

\begin{theorem}\label{lem:D-opnorm}
    Suppose $\X \in \bbX(\xpar)$ and $\D = \Del + \Eps$ in the noisy realizable setting of \cref{noisy-setting}. Then, under assumptions~\ref{assumption:compact}--\ref{assumption:expectation}, 
    \begin{align}
        \E\qty\Big[ \opnorm{\Dc - \Delc} ] \lesssim \sigma\sqrt{n}.\label{eq:D-opnorm-2}
    \end{align}
    Moreover, for all $0 < r < (q-4)/2$ there exists $\enn_1 \equiv \enn_1(r, q)$ such that for all $n > \enn_1(r, q)$,
    \begin{align}
        \opnorm{ \Dc - \Delc } &\lesssim \sigma\sqrt{n}\label{eq:D-opnorm-1}
    \end{align}
    with probability greater than $1 - O(n^{-2} + n^{-r})$.
\end{theorem}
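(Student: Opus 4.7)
The plan is to exploit the identity $\Dc - \Delc = -\tfrac{1}{2} H\Eps H$ from \cref{eq:Dc}, reducing the task to bounding $\opnorm{H\Eps H}$. First decompose the noise matrix as $\Eps = M + \tilde\Eps$ with $M := \E(\Eps \mid \X)$ and $\tilde\Eps := \Eps - M$, and treat the two pieces separately. The mean piece is controlled directly by assumption~\ref{assumption:expectation}: since $HMH$ is symmetric, we obtain
$$
\opnorm{HMH} \le \norm{HMH}_\infty \lesssim \sqrt{n},
$$
which absorbs the (potentially large) constant row-mean of $\Eps$ via the double centering.

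For the fluctuation piece, note that $\opnorm{H\tilde\Eps H} \le \opnorm{\tilde\Eps}$ since $\opnorm{H} = 1$. Conditional on $\X$, assumption~\ref{assumption:independence} makes the upper-triangular entries $\{\tilde\eps_{ij}\}_{i<j}$ independent and mean-zero, with $\E|\tilde\eps_{ij}|^q \le (2\sigma)^q$ inherited from \ref{assumption:moments} via Jensen. Because these entries are heavy-tailed ($q>4$), my plan is to perform a truncation at level $t_n \asymp \sigma\sqrt{n}$: write $\tilde\eps_{ij} = \bar\eps_{ij} + \hat\eps_{ij}$ where $\bar\eps_{ij}$ is the mean-zero truncation to $[-t_n,t_n]$ (bounded by $2t_n$ with variance $\le \sigma^2$), and $\hat\eps_{ij}$ is the tail correction.

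On the bounded matrix $\bar\Eps$ I plan to apply a sharp Latala/Seginer-type spectral-norm bound, which gives
$$
\E\opnorm{\bar\Eps} \lesssim \max_i\sqrt{\textstyle\sum_j \E\bar\eps_{ij}^2} \;+\; \Big(\textstyle\sum_{i,j}\E\bar\eps_{ij}^4\Big)^{1/4} \lesssim \sigma\sqrt{n} + (n^2 t_n^2\sigma^2)^{1/4} \lesssim \sigma\sqrt{n},
$$
our choice of $t_n$ being precisely what balances the Latala terms. Concentration around this expectation will come from a trace/moment method in the spirit of F\"uredi--Koml\'os: bounding $\E\opnorm{\bar\Eps}^{2k}$ for $k \asymp \log n$ and invoking Markov yields $\opnorm{\bar\Eps} \lesssim \sigma\sqrt{n}$ with probability $1 - O(n^{-2})$ without the usual $\sqrt{\log n}$ loss. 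For the tail matrix $\hat\Eps$, H\"older combined with the moment assumption gives $\E\hat\eps_{ij}^2 \lesssim \sigma^2 n^{-(q-2)/2}$, so $\E\opnorm{\hat\Eps}^2 \le \E\norm{\hat\Eps}_F^2 \lesssim \sigma^2 n^{(6-q)/2} \lesssim \sigma^2 n$ whenever $q \ge 4$; furthermore, a union bound combined with Markov yields $\Pr(\max_{i,j}|\tilde\eps_{ij}| > t_n) \le n^2 \sigma^q/t_n^q = n^{2 - q/2}$, which is $O(n^{-r})$ for any $r < (q-4)/2$. On this event $\hat\Eps$ reduces to a deterministic bias whose entries are $O(\sigma n^{-(q-2)/2})$, giving operator norm $\lesssim \sigma\sqrt{n}$ as well.

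The main obstacle is obtaining the sharp $\sigma\sqrt{n}$ rate without a spurious $\sqrt{\log n}$ factor, which rules out a naive application of matrix Bernstein. This forces the use of either the Latala inequality or the trace method, both of which exploit the symmetric i.i.d.\ structure of $\bar\Eps$, and demands a delicate choice of the truncation level $t_n$: large enough so that the event $\{\max_{i,j}|\tilde\eps_{ij}| \le t_n\}$ carries probability $1 - O(n^{-r})$ for $r < (q-4)/2$, yet small enough that neither the Latala fourth-moment term for $\bar\Eps$ nor the Frobenius tail term for $\hat\Eps$ exceeds $\sigma\sqrt{n}$. The interplay between the moment exponent $q$ and the desired tail exponent $r$ produces precisely the threshold $\enn_1(r,q)$ appearing in the statement.
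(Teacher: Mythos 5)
Your overall architecture is the right one and matches what the paper does inside its quantitative lemma: reduce to $\opnorm{H\Eps H}$, absorb the mean via \ref{assumption:expectation} and $\opnorm{\cdot}\le\norm{\cdot}_\infty$, then truncate the centered entries, treat the bounded piece with a sharp spectral-norm inequality, and kill the tail piece with a union bound plus Markov (the paper implements exactly this inside \cref{prop:latala-quantitative}, after first symmetrizing with Rademacher signs via \cref{lem:opnorm-symmetrization}). The genuine gap is your truncation level $t_n\asymp\sigma\sqrt{n}$ together with the concentration claim that follows it. After truncating at $t_n$, the only information you carry into the high-probability step is that the entries are bounded by $2t_n$ with variance $\le\sigma^2$, and with that information alone no standard route (F\"uredi--Koml\'os/Vu trace method at $k\asymp\log n$, Talagrand, or Bandeira--van Handel) gives $\opnorm{\bar\Eps}\lesssim\sigma\sqrt{n}$ at confidence $1-O(n^{-2})$: all of them produce a deviation term of order $t_n\sqrt{\log n}$, which with your choice is $\sigma\sqrt{n\log n}$ --- precisely the $\sqrt{\log n}$ loss you assert you avoid --- and the moment-method combinatorics that do avoid it require the boundedness parameter to be polynomially smaller than $\sigma\sqrt{n}$ (otherwise one must re-inject the $q$-th moment decay into the path counting, i.e.\ essentially reprove a quantitative Bai--Yin theorem, which you do not do). The fix is to truncate lower, as the paper does: with $K=\sigma n^{(r+2)/q}$ one still has $\Pr(\max_{ij}|\eps_{ij}|>K)\le n^2(\sigma/K)^q=n^{-r}$, while now $K\sqrt{\log n}=o(\sigma\sqrt{n})$ because $(r+2)/q<1/2$ for $r<(q-4)/2$, so the Bandeira--van Handel tail bound (applied after symmetrization, which supplies the symmetric-entry hypothesis) yields the stated rate. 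In other words, the constraint you missed when ``balancing'' the truncation level is not the Lat\l{}a{}a fourth-moment term or the Frobenius tail term, but the $t_n\sqrt{\log n}$ fluctuation term of the bounded part; this is exactly what dictates $\enn_1(r,q)$ in the paper.

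Two smaller points. First, your Lat\l{}a{}a display is wrong as written: with $t_n\asymp\sigma\sqrt n$, $(n^2t_n^2\sigma^2)^{1/4}\asymp\sigma n^{3/4}$, which is not $\lesssim\sigma\sqrt n$; you should instead use $\E\bar\eps_{ij}^4\lesssim\sigma^4$ directly from \ref{assumption:moments} (Lyapunov), which gives the fourth-root term $\lesssim\sigma\sqrt n$ and shows that no truncation is needed for the expectation bound \cref{eq:D-opnorm-2} at all --- the paper proves it by symmetrization plus \cref{prop:latala} on the untruncated matrix. Second, you center conditionally on $\X$, while \ref{assumption:expectation} is stated for $\E(\Eps)$; for fixed $\X$ these coincide, but you should say so explicitly (the paper's Rademacher symmetrization sidesteps this entirely, and simultaneously makes the entries symmetric, which is what its bounded-part inequality requires). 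Your handling of the tail matrix (no exceedances on an event of probability $1-O(n^{-r})$, plus a small deterministic re-centering bias) is fine.
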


The proof of \cref{lem:D-opnorm} is deferred to \cref{proof:lem:D-opnorm}; the bound on the expected value in \cref{eq:D-opnorm-2} follows from a straightforward use of symmetrization followed by an application of \citet[Theorem~2]{latala2005some}. On the other hand, the bound on the tail probability in \cref{eq:D-opnorm-1} requires a symmetrization argument for the tail probability associated with the $\ell_2$-operator norm (\cref{lem:opnorm-symmetrization}) followed by an application of a quantitative analogue of \citet[Theorem~2]{latala2005some} which is provided in \cref{prop:latala-quantitative}.

In other words, \cref{lem:D-opnorm} guarantees that typical fluctuations of $\Dc$ from $\Delc$ are of order $O(\sqrt{n})$. As noted in \cref{sec:background}, the existence of at least the fourth moment is necessary for controlling $\opnorm{\Dc - \Delc}$. In particular (and, in the spirit of examining Bernstein-type deviations) if one were to carefully account for the higher moments of the noise; then, from \cref{eq:moment-bounds} and \cref{eq:circled-3}, the precise bound for \cref{eq:D-opnorm-1} can be written as
\begin{align}
    \opnorm{ \Dc - \Delc } \le c_1 M_2\sqrt{n} + c_2 M_4\sqrt{n} + c_3 M_q n^{\frac{(r+2)}{q}}\log{n},\label{eq:D-opnorm-full}
\end{align}
where $\max_{ij}\E[ \abs{\eps_{ij}}^k \mid \del_{ij} ] \le M_k$ is a bound on the $k$-th conditional moment of the noise $\eps_{ij}$ and $c_1, c_2, c_3 > 0$ are absolute constants. Even in the simplest setting of the additive noise model in \cref{tab:noise-models} where $\Eps = \Xi$, \citet[Theorem~1.6]{tao2010random} show that fluctuations of order $O(\sqrt{n})$ for $\E\opnorm{\Eps}$ depend on the fourth moment $M_4$ appearing in \cref{eq:D-opnorm-full}.

Although \cref{eq:D-opnorm-1} by itself doesn't provide any guarantees for the embedded configuration, it will be the main workhorse in establishing the consistency results in this section. Building on this, we now proceed to study the reconstruction error of $\hX$ in the $\ell_2$-operator norm.

\begin{theorem}\label{thm:X-opnorm}
    Suppose $\X \in \bbX(\xpar)$ and $D = \Del + \Eps$ in the noisy realizable setting of \Cref{noisy-setting} satisfying assumptions~\ref{assumption:compact}--\ref{assumption:expectation}, and let $\hX = \cs{D, p}$. Then, for all $0 < r < (q-4)/2$ there exists $\enn_2(r, q, \sigma, \kappa)$ such that for all $n > \enn_2(r, q, \sigma, \kappa)$,
    \begin{align}
        \min_{g \in \euc{p}}\opnorm\big{\hX - g(\X)} \lesssim \sigma\kappa
    \end{align}
    with probability greater than $1 - O(n^{-2} + n^{-r})$.
\end{theorem}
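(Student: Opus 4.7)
The plan is to reduce the reconstruction problem to a spectral perturbation of the double-centered matrices $\Delc = (H\X)(H\X)\tr$ and $\Dc$, and then apply a Procrustes-type stability bound for the classical-scaling embedding. First, I fix the translation component of $g$: because $\hX = \hU\hL^{1/2}$ is automatically centered ($\onev_n$ lies in the kernel of $\Dc$, hence in the left nullspace of $\hU$), the optimal translation is the sample mean $\bar{\X} = \X\tr\onev_n/n$. Restricting to $g(x) = Q(x - \bar{\X})$ with $Q \in \orth{p}$ yields $g(\X) = (H\X)Q$, so the outer minimization over $\euc{p}$ collapses to an orthogonal Procrustes problem $\min_{Q \in \orth{p}}\opnorm{\hX - (H\X)Q}$.

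Next, I quantify the signal and the noise. By the exact factorization $\Delc = (H\X)(H\X)\tr$ and \ref{assumption:compact}, the nonzero eigenvalues of $\Delc$ satisfy $\lambda_p(\Delc) = s_p(H\X)^2 \ge n/\kappa^2$, while $\lambda_{p+1}(\Delc) = 0$. By \cref{lem:D-opnorm}, $\opnorm{\Dc - \Delc} \lesssim \sigma\sqrt{n}$ on an event of probability at least $1 - O(n^{-2} + n^{-r})$. Choosing $\enn_2(r,q,\sigma,\kappa)$ large enough that $\sigma\sqrt{n}$ is bounded by half the spectral gap of $\Delc$ (essentially $n \gtrsim \sigma^2\kappa^4$) places us in the regime where Wedin-type perturbation bounds apply cleanly.

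The core estimate is then a Procrustes stability inequality for classical scaling. Since $\hX\hX\tr = \hU\hL\hU\tr$ is the best rank-$p$ approximation to $\Dc$ in $\opnorm{\cdot}$, and $\Delc$ is itself rank $p$, a triangle inequality gives $\opnorm{\hX\hX\tr - \Delc} \le 2\opnorm{\Dc - \Delc} \lesssim \sigma\sqrt{n}$. Writing $\Delc = U\Lambda U\tr$ for its compact spectral decomposition and combining Wedin's $\sin\Theta$ theorem for $\hU$ versus $U$ with the Lipschitz sensitivity of $\Lambda \mapsto \Lambda^{1/2}$ on eigenvalues bounded below by $\lambda_p(\Delc) \ge n/\kappa^2$ --- the precise form being the Procrustes-stability bound for $\mathsf{CS}$ developed in \cite{arias2020perturbation} --- yields
\[
\min_{Q \in \orth{p}}\opnorm{\hX - (H\X)Q} \;\lesssim\; \frac{\opnorm{\hX\hX\tr - \Delc}}{\sqrt{\lambda_p(\Delc)}} \;\lesssim\; \frac{\sigma\sqrt{n}}{\sqrt{n}/\kappa} \;=\; \sigma\kappa,
\]
which is the claimed bound, with the high-probability event inherited from \cref{lem:D-opnorm}.

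The main obstacle lies in that first inequality. Davis--Kahan / Wedin controls $\hU$ against $U$ only as subspaces, whereas the reconstruction error compares the full embedding matrices $\hU\hL^{1/2}$ and $U\Lambda^{1/2}$, requiring simultaneous alignment through a \emph{single} orthogonal $Q$. Carrying out this combination --- typically by splitting $\hU\hL^{1/2} - U\Lambda^{1/2}Q$ into a $\sin\Theta$ piece multiplied by $\hL^{1/2}$ plus a square-root-Lipschitz piece controlled by $\lambda_p(\Delc)^{-1/2}\opnorm{\hL - Q\tr\Lambda Q}$ --- must be done carefully to ensure that only a single factor of $\kappa$ rather than $\kappa^2$ appears in the final rate. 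This sharp scaling is precisely what the perturbation theory in \cite{arias2020perturbation} provides, and is the reason the theorem asserts $\sigma\kappa$ rather than a weaker $\sigma\kappa^2$.
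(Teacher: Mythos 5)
Your proposal is correct and follows essentially the same route as the paper: reduce to an orthogonal Procrustes problem after centering, invoke the operator-norm bound $\opnorm{\Dc-\Delc}\lesssim\sigma\sqrt{n}$ from \cref{lem:D-opnorm}, and apply the Procrustes-stability perturbation bound of \citet{arias2020perturbation} together with $s_p(H\X)\ge\sqrt{n}/\kappa$ and a sample-size condition $n\gtrsim\sigma^2\kappa^4$ to conclude the $\sigma\kappa$ rate. Your intermediate detour through $\opnorm{\hX\hX\tr-\Delc}\le 2\opnorm{\Dc-\Delc}$ is a cosmetic variation (the paper feeds $\opnorm{\Dc-\Delc}$ into \citet[Corollary~1]{arias2020perturbation} directly, with hypothesis $\opnorm{\Xc\pinv}\,\tau\le 1/\sqrt{2}$), and does not change the argument.
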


The proof of \cref{thm:X-opnorm} is provided in \cref{proof:thm:X-opnorm}. The bound in \cref{thm:X-opnorm} relies on the perturbation result in \citet[Theorem~1]{arias2020perturbation} which bounds the reconstruction error  $\min_g\smallnorm{\hX - g(\X)}_2$ in terms of $\opnorm{\Dc - \Delc}$ and $\smallnorm{\X\pinv}_2$. The bound on the fluctuations of $\opnorm{\Dc - \Delc}$ in tail probability then leads to a control on the reconstruction error of $\hX$.

The rigid transformation $g \in \euc{p}$ which provides the guarantee in \cref{thm:X-opnorm} is obtained from the solution to the orthogonal Procrustes problem. Specifically, the orthogonal Procrustes problem finds the optimal $Q\in \orth{p}$ which aligns the subspaces of $\hU$ and $U$ by solving:
\begin{align}
    Q_* := \argmin_{Q \in \orth{p}}\norm\big{\h{U} - UQ}^2_F\label{eq:procrustes}
\end{align}
where, for the singular value decomposition $U\tr \hU = W_1 S W_2\tr$, the solution is given by $Q_* = W_1W_2\tr$. Then, the bound in \cref{thm:X-opnorm} holds for the rigid transformation given by
\begin{align}
    g_*(v) = Q_*\tr Q(v - \bx),\label{eq:g-star}
\end{align}
where $Q \in \orth{p}$ is the unidentifiable orthogonal transformation in \cref{eq:cs-exact-recovery}. In other words, $Q_*$ is the optimal rotation matrix which aligns the subspaces spanned by $\hX = \cs{\D, p}$ and $H\X Q\tr = \cs{\Del, p}$. Since $\hX$, and therefore $\hU$, are sample estimators based on the randomness underlying $D$, the optimal $Q_* \in \orth{p}$ and $g_* \in \euc{p}$ are also random and unidentifiable without prior information on $\X$.


To better understand how classical scaling performs on a more granular level, it is insightful to examine specific forms of reconstruction errors that are directly relevant in practical applications. A straightforward application of \cref{thm:X-opnorm} yields the following corollary, which provides a bound on the root-mean-squared reconstruction error, $\loss\rmse$, described in \cref{eq:losses}.

\begin{corollary}\label{cor:frobenius-norm}
    Under the same assumptions as in \cref{thm:X-opnorm}, for all $0 < r < (q-4)/2$ and $n > \enn_2(r, q, \sigma, \kappa)$,
    \begin{align}
    \loss\rmse(\hX, \X) \lesssim \frac{\sigma\kappa}{\sqrt{n}}.\label{eq:frobenius-norm-convergence}
    \end{align}
    with probability greater than $1 - O(n^{-2} + n^{-r})$,
\end{corollary}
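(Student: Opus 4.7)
This corollary is essentially a free consequence of \cref{thm:X-opnorm} together with the fact that the matrix $\hX - g(\X)$ has at most $p$ nonzero singular values. The plan has three short steps.

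First, I would rewrite the root-mean-squared loss in matrix form. By the definition in \cref{eq:losses},
\begin{align*}
\loss\rmse(\hX, \X) \;=\; \frac{1}{\sqrt n}\,\min_{g \in \euc{p}} \norm\big{\hX - g(\X)}_F.
\end{align*}

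Second, I would use the general inequality $\norm{M}_F \le \sqrt{\rank(M)}\,\opnorm{M}$ applied to $M = \hX - g(\X) \in \Rnp$, which has rank at most $p$. This yields $\norm{\hX - g(\X)}_F \le \sqrt{p}\,\opnorm{\hX - g(\X)}$ for every $g \in \euc{p}$, and hence
\begin{align*}
\min_{g \in \euc{p}} \norm\big{\hX - g(\X)}_F \;\le\; \sqrt p\,\min_{g \in \euc{p}} \opnorm\big{\hX - g(\X)}.
\end{align*}
Here it is important that the minimum on the left is taken over the same family $\euc{p}$: we simply evaluate the left-hand side at the operator-norm minimizer $g_*$ from \cref{eq:g-star} and use the Frobenius/operator-norm comparison at that $g_*$.

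Third, I would invoke \cref{thm:X-opnorm} directly: for $n > \enn_2(r, q, \sigma, \kappa)$ and with probability at least $1 - O(n^{-2} + n^{-r})$, the right-hand side is bounded by $\sqrt{p}\,\sigma\kappa$. Dividing by $\sqrt n$ yields
\begin{align*}
\loss\rmse(\hX, \X) \;\lesssim\; \frac{\sqrt{p}\,\sigma\kappa}{\sqrt n} \;\lesssim\; \frac{\sigma\kappa}{\sqrt n},
\end{align*}
where the final step absorbs the factor $\sqrt{p}$ into the implicit constant since $p$ is fixed. There is no real obstacle here; the only thing to verify is that the rigid transformation achieving the operator-norm bound also gives a valid Frobenius-norm upper bound, which is immediate from the rank-$p$ comparison above.
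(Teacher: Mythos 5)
Your proof is correct and takes essentially the same approach as the paper: the paper also observes that $\smallnorm{\hX - g(\X)}_F \le \sqrt{p}\,\opnorm{\hX - g(\X)}$ for every $g \in \euc{p}$ (using that the difference is an $n \times p$ matrix of rank at most $p$) and then invokes \cref{thm:X-opnorm}. Your explicit remark about evaluating the Frobenius-norm objective at the operator-norm minimizer $g_*$ just spells out the one-line justification the paper leaves implicit.
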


The result in \cref{cor:frobenius-norm} shows that, up to a rigid transformation, the embedded configuration $\hX$ obtained using classical scaling essentially guarantees parametric rates of convergence. It is instructive to compare the result in \cref{cor:frobenius-norm} with existing results. \citet[Corollary~1]{zhang2016distance} recovers the same result as in \cref{eq:frobenius-norm-convergence} under the assumption that the $\eps_{ij}$ are \iid{} and that the noise is additive. In other words, classical scaling treats all noise models {falling within the confines of Section~\ref{sec:background}} as if they were simple additive \iid{} errors.  Similarly, for the average reconstruction error, 
$$
\loss\avg := \min_{g \in \euc{p}} \frac{1}{n} \sum_{i \in [n]} \smallnorm{\hx_i - g(X_i)},
$$ 
the bound $\loss\avg \le \loss\rmse$ follows directly from the Cauchy-Schwarz inequality; therefore $\loss\avg$ also satisfies the same parametric convergence rate as in \cref{eq:frobenius-norm-convergence}.

In many scenarios, it is crucial to have a uniform bound that controls the worst-case deviation of the estimated configuration from the true configuration. To this end, we consider the $\ell_{2\to\infty}$-operator norm, which provides a finer control on the reconstruction error of each row of $\hX$. Specifically, the $\loss\ttinft$ given by
\begin{align}
    \loss\ttinft(\hX, \X) = \min_{g \in \euc{p}}\ttinf\big{\hX - g(\X)} = \min_{g \in \euc{p}} \max_{i \in [n]}\norm\big{ \hx_{i} - g(x_{i}) }
\end{align}
provides a uniform bound on how well each point $\hx_i$ in the estimated configuration approximates $X_i$ in the latent configuration after a rigid transformation $g \in \euc{p}$. We now state the main result which  establishes the uniform convergence rate of reconstruction error of $\hX$ in the $\loss\ttinft$ metric.

\begin{theorem}\label{thm:X-ttinf}
    Suppose $\X \in \bbX(\xpar)$ and $D = \Del + \Eps$ in the noisy realizable setting of \Cref{noisy-setting} satisfying assumptions~\ref{assumption:compact}--\ref{assumption:expectation}, and let $\hX = \cs{D, p}$. Then, for all $0 < r < (q-4)/2$ there exists $\enn_3(r, q, \sigma, \kappa, \Rx)$ such that for all $n > \enn_3(r, q, \sigma,  \kappa, \Rx)$,
    \begin{align}
    \loss\ttinft(\hX, \X) \lesssim \mathbf{\overline{c}}(\xpar) \sigma\sqrt{\frac{ \log{n}}{n}},\label{eq:ttinf-concentration-bound}
    \end{align}
    with probability greater than $1 - O(n^{-2} + n^{-r})$, where $\mathbf{\overline{c}}(\xpar) = \kappa^2( \kappa + \Rx)$.
\end{theorem}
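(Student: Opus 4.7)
The plan is to refine the operator-norm argument of \cref{thm:X-opnorm} into a row-wise $\ell_{2\to\infty}$ bound via a careful entrywise perturbation expansion. Using the exact-recovery identity \cref{eq:cs-exact-recovery} together with the Procrustes-aligned rigid transformation $g_*$ in \cref{eq:g-star}, the reconstruction error reduces to bounding $\ttinf{\hU\hL^{1/2} - U\Lambda^{1/2}Q_*}$, where $Q_*$ is the orthogonal Procrustes minimizer in \cref{eq:procrustes}. I would decompose this quantity into a leading linear-in-$E$ contribution of the form $E U\Lambda^{-1/2}Q_*$, with $E := \Dc - \Delc = -\tfrac{1}{2}H\Eps H$, plus higher-order remainder terms involving products of $E$ with the subspace residual $\hU - UQ_*$ and with the eigenvalue discrepancy $\hL^{1/2} - Q_*^\top\Lambda^{1/2}Q_*$.

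For the remainder, I would combine the operator-norm bound $\opnorm{E} \lesssim \sigma\sqrt{n}$ from \cref{lem:D-opnorm} with the eigenvalue lower bound $\lambda_p(\Lambda) \gtrsim n/\kappa^2$ (from \ref{assumption:compact}) and the row-wise structure of $U = (H\X)Q^\top\Lambda^{-1/2}$, which yields $\ttinf{U} \leq \ttinf{H\X}\opnorm{\Lambda^{-1/2}} \lesssim R_X\kappa/\sqrt{n}$. Standard subspace perturbation and entrywise eigenvector bounds then show the residual contribution is of order $\sigma^2\kappa^3(\kappa+R_X)/\sqrt{n}$, which is dominated by the target rate once $n$ is sufficiently large.

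The main work is in bounding the leading term $\ttinf{E U\Lambda^{-1/2}} = \tfrac{1}{2}\ttinf{H\Eps U\Lambda^{-1/2}}$, making use of $HU = U$ since the columns of $U$ are orthogonal to $\onev$. Conditioning on $\X$, the $i$-th row of $\Eps U\Lambda^{-1/2}$ is an independent sum $\sum_{j} \eps_{ij}(U\Lambda^{-1/2})_{j\cdot}$ with row-wise variance bounded by $\sigma^2 \tr(\Lambda^{-1}) \lesssim \sigma^2 \kappa^2/n$. The principal obstacle is that under \ref{assumption:moments} the noise has only finite $q$-th moments for some $q > 4$, so Bernstein-type concentration does not apply directly. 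I would address this by truncating each $\eps_{ij}$ at a level $\tau \asymp \sigma n^{1/q}$: the truncated part admits a Bernstein-type bound which, after a union bound over the $n$ rows, yields the $\sigma\kappa\sqrt{\log n/n}$ rate; the untruncated tail is negligible in probability (contributing at most $O(n^{-r})$ for $r < (q-4)/2$) via Markov's inequality, while its mean correction is absorbed by assumption \ref{assumption:expectation} into a lower-order $O(\sigma/\sqrt{n})$ additive term. Assembling the leading-term and remainder bounds then yields the claimed rate with constant $\mathbf{\overline{c}}(\xpar) = \kappa^2(\kappa + R_X)$, where the additional $\kappa + R_X$ factor emerges from combining the $\ell_{2\to\infty}$ bound on $U$ with the operator-norm scaling of $\Lambda^{-1/2}$ in the mixing of residual and leading terms.
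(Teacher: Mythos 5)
Your overall route coincides with the paper's: your leading term $(\Dc-\Delc)\,U\L^{-1/2}Q_*$ is exactly the term $R_4$ in \cref{lem:decomposition}, your higher-order remainders correspond to $R_1,R_2,R_3$ (which the paper bounds on the event of \cref{lem:D-opnorm} via \cref{lem:gram} and \cref{lem:eigen}), and your row-wise ``truncate, apply Bernstein, union bound over rows'' treatment of the leading term is precisely the content of \cref{prop:ttinf-concentration}. Two points in your handling of the leading term, however, do not work as stated.

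First, the truncation level $\tau\asymp\sigma n^{1/q}$ is too low. Under \ref{assumption:moments} the only available tail control is Markov: $\pr(|\eps_{ij}|>\tau\mid\del_{ij})\le\sigma^q/\tau^q$, so a union bound over the $\asymp n^2$ entries (or even over the $n$ entries of a single row) gives $n^2\sigma^q/\tau^q\asymp n$ with your choice of $\tau$, which is vacuous rather than the claimed $O(n^{-r})$. One needs $\tau\asymp\sigma n^{(r+2)/q}$, as in the proof of \cref{prop:latala-quantitative} and \cref{prop:ttinf-concentration}; the argument then still closes because $(r+2)/q<1/2$ for $r<(q-4)/2$, so the Bernstein range term $\tau\cdot\ttinf{U\L^{-1/2}}\cdot\log n\lesssim\sigma\kappa^2\Rx\,n^{(r+2)/q-1}\log n$ remains of lower order than $\sigma\kappa\sqrt{\log n/n}$. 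Second, your claim that the mean correction from truncation is ``absorbed by assumption \ref{assumption:expectation}'' conflates two different quantities: \ref{assumption:expectation} controls $H\,\E(\Eps)\,H$ for the \emph{untruncated} noise (this is the paper's term $\Circled{2}$ in the proof of \cref{thm:X-ttinf}), whereas truncating a non-symmetric $\eps_{ij}$ introduces an additional bias $\E[\eps_{ij}\mathbb{1}\{|\eps_{ij}|>\tau\}\mid\del_{ij}]$ that \ref{assumption:expectation} says nothing about. This bias must be handled separately, either by the standard estimate $|\E[\eps_{ij}\mathbb{1}\{|\eps_{ij}|>\tau\}\mid\del_{ij}]|\le\sigma^q/\tau^{q-1}$ (which, with the corrected $\tau$, is indeed negligible after summing over a row), or—as the paper does in Step~1 of \cref{prop:ttinf-concentration}—by symmetrizing with Rademacher multipliers before truncating so that the truncated variables remain mean zero. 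With these two repairs your plan reproduces the paper's proof; the remaining differences (e.g.\ the exact powers of $\kappa,\Rx$ in the remainder bounds) only affect lower-order terms and the size of $\enn_3$.
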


In other words, \cref{thm:X-ttinf} guarantees that, with high probability and modulo rigid transformations, every embedded point $\hx_i$ is within a $\sqrt{\log{n}/n}$ neighborhood of the a point $X_i$ in the latent configuration, and differs from \cref{cor:frobenius-norm} by a $\sqrt{\log\,{n}}$ factor. The proof of \cref{thm:X-ttinf} is provided in \cref{proof:thm:X-ttinf}. The key ingredients in the proof are a decomposition of the Frobenius-optimal Procrustes alignment (\cref{lem:decomposition}), followed by a matrix concentration inequality for controlling the $\ell_{2\to\infty}$ norm of a multiplier  process (\cref{prop:ttinf-concentration}). We also note that a direct application of the subordinate properties of the $\ell_{2\to\infty}$ together with \cref{thm:X-opnorm} don't provide the required control on the reconstruction error of $\hX$ as in \cref{thm:X-ttinf}. Specifically, a direct application of \citet[Proposition~6.3]{cape2019two} to the decomposition from \cref{lem:decomposition} or to the result of \cref{thm:X-opnorm} doesn't converge to zero at the rate in \cref{eq:ttinf-concentration-bound}. In essence, the key to obtaining the near parametric rate is to apply matrix concentration via \cref{prop:ttinf-concentration} soon after accounting for  the smaller residual terms arising from the Procrustes decomposition.

\begin{remark}
    We refer the reader to the proof of \cref{thm:minimax-frobenius} and \cref{thm:minimax-ttinf} in \cref{sec:proofs} where the precise constants in the tail probability bound, and the minimum sample size $\enn(r, q, \sigma, \kappa, \Rx)$ leading to the convergence rate are provided. We also note that, as such, the convergence rates in \cref{eq:frobenius-norm-convergence} and \cref{eq:ttinf-concentration-bound} hide the dependence on the dimension $p$, which is assumed to be fixed. A more refined bound, albeit in a more cumbersome form, is obtained in \cref{eq:precise-frobenius-norm} and \cref{eq:precise-ttinf-bound}, respectively.
\end{remark}
    
    In light of \cref{lemma:iid} and \cref{lemma:noise-models}, we have the following corollary which establishes the consistency of classical scaling when the rows of the latent configuration are observed \iid{} at random.

\begin{corollary}\label{cor:iid}
    Suppose the rows $X_1, X_2, \dots, X_n \in \R^p$ of $\X \in \Rnp$ are observed \iid{} from a distribution $F \in \F(\xpar)$ given in \cref{eq:F} and $D = \Del + \Eps$ in the noisy realizable setting of \cref{noisy-setting} satisfying assumptions~\ref{assumption:independence}~\&~\ref{assumption:moments}. Let $\hX = \cs{\D, p}$. Then, for all $0 < r < (q-4)/2$ there exists $\enn(r, q, \sigma, \kappa, \Rx)$ such that for all $n > \enn(r, q, \sigma, \kappa, \Rx)$,
    \begin{align}
        \loss\rmse(\hX, \X) &\lesssim \frac{\sigma\kappa}{\sqrt{n}} \qty\Big(1 + o(1)) \;\qq{and}\; \loss\ttinft(\hX, \X) \lesssim {\mathbf{\overline{c}}(\xpar)} \sigma\sqrt{\frac{\log{n}}{n}} \qty\Big( 1 + o(1) )\label{eq:iid-convergence}
    \end{align}
    with probability greater than $1 - O(n^{-2} + n^{-r})$.
\end{corollary}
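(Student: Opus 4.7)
The plan is to reduce Corollary~\ref{cor:iid} to the deterministic-configuration results (\cref{cor:frobenius-norm} and \cref{thm:X-ttinf}) by invoking \cref{lemma:iid} to verify \ref{assumption:compact} on a high-probability event, and \cref{lemma:noise-models}\ref{lem:noise-1} to verify \ref{assumption:expectation} automatically in the i.i.d.\ setting. Assumptions \ref{assumption:independence} and \ref{assumption:moments} are directly imposed in the statement, so only \ref{assumption:compact} and \ref{assumption:expectation} need to be checked.

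First I would apply \cref{lemma:iid}: there is $\enn_0(\kappa,\Rx)$ such that for $n > \enn_0(\kappa,\Rx)$, the event
\[
\mathcal{E}_n := \Bigl\{ \X \in \bbX\bigl( \kappa(1 + O(\alpha_n)), \Rx \bigr) \Bigr\}, \qquad \alpha_n = \kappa^2 \Rx^2 \sqrt{\log n / n},
\]
has probability at least $1 - O(n^{-2})$. Second, since the rows of $\X$ are i.i.d.\ from $F \in \F(\kappa,\Rx)$, \cref{lemma:noise-models}\ref{lem:noise-1} gives $\norm{H\,\E(\Eps)\,H}_\infty \lesssim \sqrt{n}$ unconditionally, so \ref{assumption:expectation} holds. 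Third, on $\mathcal{E}_n$, the configuration $\X$ is a fixed element of $\bbX(\kappa_n, \Rx)$ with $\kappa_n := \kappa(1 + O(\alpha_n))$, and \ref{assumption:independence}--\ref{assumption:expectation} are in force, so I can invoke \cref{cor:frobenius-norm} and \cref{thm:X-ttinf} \emph{conditionally} on $\X$. These yield, for all $0 < r < (q-4)/2$ and for $n$ exceeding a threshold depending on $(r,q,\sigma,\kappa_n,\Rx)$,
\[
\loss\rmse(\hX,\X) \lesssim \frac{\sigma \kappa_n}{\sqrt{n}}, \qquad \loss\ttinft(\hX,\X) \lesssim \mathbf{\overline{c}}(\kappa_n, \Rx)\, \sigma \sqrt{\frac{\log n}{n}},
\]
each with conditional probability at least $1 - O(n^{-2} + n^{-r})$. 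Finally, a union bound over $\mathcal{E}_n$ and these two events produces the claimed unconditional rates with the stated probability.

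The only thing left is bookkeeping on the threshold and the $(1+o(1))$ factor. Since $\alpha_n \to 0$, for $n$ beyond a suitable $\enn(r,q,\sigma,\kappa,\Rx)$ we have $\kappa_n = \kappa(1+o(1))$, and by continuity of $\mathbf{\overline{c}}(\cdot,\Rx) = \kappa^2(\kappa+\Rx)$ in its first argument, $\mathbf{\overline{c}}(\kappa_n,\Rx) = \mathbf{\overline{c}}(\kappa,\Rx)(1+o(1))$. Choosing $\enn(r,q,\sigma,\kappa,\Rx)$ to be the maximum of $\enn_0(\kappa,\Rx)$, the $\enn_2, \enn_3$ thresholds from \cref{cor:frobenius-norm} and \cref{thm:X-ttinf} evaluated at $\kappa_n$ (which are uniformly bounded since $\kappa_n$ is uniformly bounded for $n$ large), and a level ensuring $\alpha_n$ is small enough, delivers both displays in \cref{eq:iid-convergence}.

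The main technical point is the innocuous-looking passage to the conditional application of \cref{cor:frobenius-norm,thm:X-ttinf}: one must verify that on $\mathcal{E}_n$ the dependence of the noise $\Eps$ on $\X$ does not spoil the conditional application, which is immediate from \ref{assumption:independence} (conditional on $\Del$, hence on $\X$, the entries $\xi_{ij}$ are i.i.d.). There is no real obstacle beyond this; the result is essentially a corollary of the deterministic bounds together with the two preparatory lemmas, as the text's preamble already indicates.
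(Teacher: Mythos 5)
The proposal follows the same route the paper leaves implicit (combine \cref{lemma:iid} and \cref{lemma:noise-models} with \cref{cor:frobenius-norm} and \cref{thm:X-ttinf} via a union bound), but there is a subtle bookkeeping gap in the conditioning step that you flag in your last paragraph without actually resolving. \cref{lemma:noise-models}\ref{lem:noise-1} verifies \ref{assumption:expectation} for the \emph{unconditional} expectation $\E(\Eps)$, averaging over both $\X$ and $\Xi$; but once you ``invoke \cref{cor:frobenius-norm} and \cref{thm:X-ttinf} conditionally on $\X$,'' the assumption that the proof of \cref{lem:D-opnorm} uses is $\norm{H\,\E(\Eps\mid\X)\,H}_\infty \lesssim \sqrt{n}$, which can fail even though the unconditional version holds. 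The two coincide when $\E(\Eps\mid\Del)$ has constant off-diagonal entries (additive model, or any $\Psi$ with $\E[\Psi(\delta,\xi)\mid\delta]$ independent of $\delta$), but for, say, the multiplicative model with $\E(\xi_{ij})=\gamma\neq 0$ one has $\E(\Eps\mid\X)=\gamma\Del(\X)$ and $\norm{H\,\E(\Eps\mid\X)\,H}_\infty = 2|\gamma|\,\norm{\Delc(\X)}_\infty \asymp n$, so conditional \ref{assumption:expectation} does not hold and the conditional invocation of \cref{cor:frobenius-norm} is not licensed. The clean way to close this is to avoid conditioning on $\X$ when bounding the perturbation: observe that the proof of \cref{lem:D-opnorm} never uses \ref{assumption:compact} and already handles a random $\Del$ by conditioning on it \emph{inside} the symmetrization/Latała step, so with the unconditional \ref{assumption:expectation} supplied by \cref{lemma:noise-models}\ref{lem:noise-1} one gets the event $A=\{\opnorm{\Dc-\Delc}\lesssim\sigma\sqrt{n}\}$ directly with probability $1-O(n^{-2}+n^{-r})$ over the joint law of $(\X,\Xi)$; then intersect $A$ with the event from \cref{lemma:iid} and apply the deterministic perturbation bound (\citealp{arias2020perturbation}) and the decomposition/concentration machinery of \cref{thm:X-ttinf} on that intersection. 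The rest of your write-up — the $(1+o(1))$ tracking via $\kappa_n = \kappa(1+O(\alpha_n))$, the continuity of $\mathbf{\overline{c}}(\cdot,\Rx)$, and the choice of $\enn$ — is fine.
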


Collectively, the results in \cref{thm:X-opnorm} and \cref{thm:X-ttinf} show that, given a noisy dissimilarity matrix, classical scaling is statistically consistent, with essentially parametric rates of convergence even under minimal assumptions on the noise. This might seem surprising at first, especially considering that as the sample size $n$ increases, the number of unknown parameters also increases. One possible way to interpret this is to view the classical scaling problem a as sort of ``full-rank regression'' problem: $d_{ij} = \del_{ij}(\X) + \eps_{ij}$ where we have ${n \choose 2}$ observations of the noisy dissimilarities (as responses), and the goal is to recover the true configuration $\X$ (the regression coefficients) which consists of $n$ unknown points. The fixed design for this ``regression setup''---so to speak---is the distance transformation operator in \cref{eq:Del-matrix-form}. With this analogy in mind, one would expect the root-mean-squared error risk for ordinary least squares in this setup to be of order 
$$
O\qty(\sigma \sqrt{\frac{n}{{n \choose 2}}}) = O\qty( {\frac{\sigma}{\sqrt{n}}}),
$$ 
which coincides with the rate in \cref{cor:frobenius-norm}. Similarly, the $\ell_\infty$ risk should be of order 
$$
O\qty(\sigma \sqrt{ \frac{{n}\log{n}}{{n \choose 2}} }) = O\qty(\sigma  \sqrt{\frac{\log{n}}{n}})
$$ 
which is consistent with the rate in \cref{eq:ttinf-concentration-bound}, modulo some constant factors depending on $\xpar$.



\section{Lower Bounds on the Reconstruction Error}\label{sec:lower-bound}

The results in \cref{sec:consistency} demonstrate that classical scaling applied to a noisy dissimilarity matrix consistently recovers the information underlying the configuration $\X$ under minimal assumptions on the noise model. To understand the fundamental limits of classical scaling and assess whether the derived rates of convergence are optimal, we adopt a minimax framework to establish lower bounds on the reconstruction error for latent position estimation in the presence of noise.

To formalize our analysis, consider the set of configurations $\bbX = \bbX(\xpar)$ as defined in \cref{eq:bbX}. We define $\Theta(\xpar, \sigma)$ to encompass all possible configurations and noise models under consideration:
\begin{align}
    \Theta(\xpar, \sigma) :=
    \left\{
    (\X, \Psi, \pi) 
    \quad\middle\lvert\:
    \parbox{0.55\textwidth}{\centering
    $\X \in \bbX(\xpar)$ and $\Eps = (\eps_{ij})$ satisfies \ref{assumption:independence}--\ref{assumption:expectation}\\[3pt] for $\eps_{ij} = \Psi(\del_{ij}(\X), \xi_{ij})$ and $\xi_{ij} \sim \pi$ for all $i < j$
    }
    \right\}.\label{eq:Theta}
\end{align}
In other words, each $\theta = (\X, \Psi, \pi) \in \Theta(\xpar,\sigma)$ defines a joint distribution $\pr_{(\X, \Psi, \pi)}$ of the observed $n \times n$ dissimilarity matrix
\begin{align}
    D = \Del(\X) + \Eps \qq{with} \eps_{ij} = \Psi(\del_{ij}(\X), \xi_{ij})\;\text{ and }\; \xi_{ij} \sim \pi,
\end{align}
in the noisy realizable setting of \cref{noisy-setting} satisfying assumptions~\ref{assumption:compact}-\ref{assumption:expectation}.

Given a reconstruction loss $\loss(\Y, \X)$ and an estimator $\hX: \Rnn \to \Rnp$, which takes an input dissimilarity matrix $D \sim \pr_{(\X, \Psi, \pi)}$ and outputs an $n\times p$ configuration, the sequence $r_n$ is a minimax lower bound for the reconstruction loss if
\begin{align}
    \inf_{\hX}\sup_{(\X, \Psi, \pi) \in \Theta(\xpar, \sigma)} \pr_{(\X, \Psi, \pi)}\qty{ \loss\qty\big(\hX(D), \X) \gtrsim r_n } \ge c,\label{eq:minimax-loss}
\end{align}
where $0 < c < 1$ is a universal constant. The minimax lower bound $r_n$ captures the best possible performance that any estimator can achieve uniformly over the parameter space $\Theta(\xpar, \sigma)$. The following result provides a minimax lower bound of the reconstruction error under the $\loss\rmse$ loss.

\begin{theorem}\label{thm:minimax-frobenius}
    Let $\Theta(\xpar, \sigma)$ be as defined in \cref{eq:Theta}. Then, there exists $\enn_4(\sigma, \kappa)$ such that for all $n > \enn_4(\sigma, \kappa)$,
    \begin{align}
        \inf_{\hX}\sup_{(\X, \Psi, \pi) \in \Theta(\xpar, \sigma)} \pr_{(\X, \Psi, \pi)}\qty{ \loss\rmse\qty\big(\hX(D), \X) \gtrsim \frac{\sigma\kappa}{\sqrt{n}} } \ge \half \qty\big( 1 - o(1) ).
    \end{align}
\end{theorem}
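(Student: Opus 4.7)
The plan is to deduce the lower bound from Fano's inequality, applied to a Gilbert--Varshamov style packing of configurations that perturb a fixed, well-conditioned baseline by independent sign-flips along its worst-conditioned direction.

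I would fix a baseline $\X^{(0)} \in \bbX(\kappa/2, \Rx/2)$ with slack in \ref{assumption:compact}, and let $v \in \R^p$, $\|v\|=1$, be the right singular vector of $H\X^{(0)}/\sqrt{n}$ associated with its smallest singular value (of order $1/\kappa$). For each balanced sign pattern $\tau \in \{-1,+1\}^n$ with $\sum_i \tau_i = 0$, set
\[
\X^\tau \;=\; \X^{(0)} + \tfrac{t}{2}\,\tau v\tr, \qquad t = c\,\sigma\kappa/\sqrt{n},
\]
for a small absolute constant $c > 0$ to be tuned, and pair every hypothesis with the additive Gaussian noise model $\xi_{ij}\sim N(0, c'\sigma^2)$ i.i.d.\ (with $c'$ chosen so that \ref{assumption:moments} holds with room), which trivially also satisfies \ref{assumption:independence} and \ref{assumption:expectation}. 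From Gilbert--Varshamov, select a packing $\mathcal T \subset \{-1,+1\}^n$ of balanced sign patterns with $\log|\mathcal T| \gtrsim n$ and pairwise Hamming distance at least $n/4$. For $n > N(\sigma,\kappa)$, each $\X^\tau$ remains in $\bbX(\xpar)$ since the perturbation has operator and row-wise norms of order $t = o(1)$.

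Two calculations drive the argument. First, the pairwise Kullback--Leibler divergence under Gaussian noise is $\mathrm{KL}(\pr_\tau,\pr_{\tau'}) = (2c'\sigma^2)^{-1}\sum_{i<j}(\delta_{ij}^\tau-\delta_{ij}^{\tau'})^2$; a first-order expansion combined with the defining property $\sum_{i,j}\langle X^{(0)}_i - X^{(0)}_j, v\rangle^2 = 2n^2/\kappa^2$ (from $v$ being the bottom right singular vector of $H\X^{(0)}/\sqrt n$) yields $\mathrm{KL}(\pr_\tau,\pr_{\tau'}) \lesssim c^2 n$ uniformly over $\mathcal T$. Second, the loss separation: $\|\X^\tau-\X^{\tau'}\|_F^2 = t^2 d_H(\tau,\tau')$ exactly, and the Procrustes-optimal rigid motion shrinks this by only a lower-order amount because $(\tau - \tau')v\tr$ is approximately Frobenius-orthogonal to the rigid-motion tangent $\{\X^{(0)}A + \onev w\tr : A \in \R^{p\times p} \text{ skew-symmetric}, w\in\R^p\}$ at $\X^{(0)}$; consequently $\loss\rmse(\X^\tau,\X^{\tau'}) \ge t\sqrt{d_H(\tau,\tau')/n}\,(1-o(1))$, which exceeds $t/4 \asymp \sigma\kappa/\sqrt n$ for every distinct pair in $\mathcal T$. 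For $c$ small enough, Fano's inequality therefore gives
\[
\inf_{\hat\tau}\sup_{\tau\in\mathcal T}\pr_\tau\{\hat\tau\ne\tau\} \;\ge\; 1 - \frac{\max_{\tau,\tau'\in\mathcal T}\mathrm{KL}(\pr_\tau,\pr_{\tau'}) + \log 2}{\log|\mathcal T|} \;\ge\; \tfrac12\bigl(1 - o(1)\bigr),
\]
and introducing the decoder $\hat\tau(\hX) := \argmin_{\tau\in\mathcal T}\loss\rmse(\hX,\X^\tau)$, the quasi-triangle inequality gives $\loss\rmse(\hX,\X^\tau) \ge \tfrac12\loss\rmse(\X^\tau,\X^{\hat\tau(\hX)})$ whenever $\hat\tau(\hX) \ne \tau$, lifting the error probability into the claimed tail bound on $\loss\rmse(\hX(D),\X)$.

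I expect the main technical obstacle to be the loss-separation step---specifically, verifying that the Procrustes-optimal rigid motion between $\X^\tau$ and $\X^{\tau'}$ cannot absorb more than a lower-order portion of the sign perturbation. This reduces to quantifying the approximate Frobenius-orthogonality between $(\tau-\tau')v\tr$ and the rigid-motion tangent at $\X^{(0)}$, which combines three ingredients: $v$ being the bottom singular direction of $H\X^{(0)}$, the balanced structure of $\tau$ (which neutralizes translations), and a fixed-dimension counting argument since the rigid-motion tangent has dimension $p(p+1)/2 = O(1)$ while $\|(\tau-\tau')v\tr\|_F \ge \sqrt{n}$. A careful adaptation of the Procrustes-decomposition machinery developed elsewhere in the paper for the upper bound should handle this cleanly.
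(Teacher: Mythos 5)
You have the right skeleton — Fano's method applied to a Gilbert--Varshamov packing of sign-perturbed configurations with additive Gaussian noise, reduced to testing via a nearest-packing decoder — and this matches the paper's high-level strategy (the paper uses Tsybakov's Theorem~2.5, quoted as \cref{lemma:fano}, which internalizes your decoder argument). Two differences are worth flagging, one cosmetic and one substantive.

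The cosmetic difference is your choice of baseline: you perturb along the bottom singular direction of a baseline in $\bbX(\kappa/2, \Rx/2)$. The paper instead takes an \emph{isotropic} baseline $\X = \gamma\sqrt{n}U$ with $U\tr U = I_p$ and $\gamma = 2\kappa/(\kappa^2+1)$, the harmonic mean of $1/\kappa$ and $\kappa$, so that all singular values of $H\X/\sqrt{n}$ equal $\gamma$. Any unit vector $v$ then works, Weyl's inequality gives singular values of $H\Y/\sqrt n$ in $[\gamma - \eta, \gamma + \eta]$ after the perturbation, and containment in $\bbX(\xpar)$ is immediate because $\gamma$ sits comfortably in the interior of $[1/\kappa, \kappa]$. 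This is simpler than tracking slack through the bottom-direction construction and yields the exact constant $\sigma\kappa/\sqrt n$ at the end after unwinding $\gamma$.

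The substantive gap is the loss-separation step, which you yourself flag as the main obstacle. Your proposed argument — approximate Frobenius-orthogonality between $(\tau-\tau')v\tr$ and the rigid-motion tangent $\{\X^{(0)}A + \onev w\tr\}$, leveraged by dimension counting — does not close on its own. The projection of $(\tau-\tau')v\tr$ onto a single tangent direction $\X^{(0)}A/\|\X^{(0)}A\|_F$ is $(\tau-\tau')\tr\X^{(0)}Av/\|\X^{(0)}A\|_F$, and a direct estimate gives this as $O(\sqrt{d_H(\tau,\tau')})$, which is \emph{comparable} to $\|(\tau-\tau')v\tr\|_F$, not lower order; the fact that the tangent space has dimension $O(1)$ therefore does not by itself show the projection is negligible. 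The paper avoids this analysis entirely via \cref{lemma:Y-subset-X}\ref{eq:Y-separation}: because the Gram matrix $\Y\Y\tr$ is invariant under right-multiplication by orthogonal matrices, one has $\|\Y\Y\tr - \Y'\Y'{}\tr\|_F \le 2\sqrt{n}(\gamma+\eta)\min_Q\|\Y' - \Y Q\|_F$ for all $Q \in \orth{p}$, so a \emph{lower} bound on the Gram-matrix difference directly lower-bounds the Procrustes-aligned Frobenius distance. The Gram-matrix difference $\Y'\Y'{}\tr - \Y\Y\tr$ is then expanded explicitly as $\eta(\zeta u\tr + u\zeta\tr) + \eta^2(\omega'\zeta\tr + \zeta\omega\tr)$ with $\zeta = \omega' - \omega$ and $u = \X v$, and a short computation gives $\|\Y'\Y'{}\tr - \Y\Y\tr\|_F^2 \ge 4n\eta^2\gamma^2 d_H(\tau,\tau')$ once $\eta \le \gamma/8$. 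This sidesteps the Procrustes tangent-space geometry entirely and is the ingredient your proposal is missing.
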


In view of \cref{cor:frobenius-norm}, the lower bound in \cref{thm:minimax-frobenius} matches the upper bound up to constant factors. Importantly, the result shows that the rate of convergence for classical scaling in \cref{cor:frobenius-norm} is minimax optimal. Moreover, the minimum sample sizes $\enn_2(\sigma, \kappa) \asymp \enn_4(\sigma, \kappa) \asymp \sigma^2\kappa^4$ also match up to absolute constants. The proof of \cref{thm:minimax-frobenius} is based on an application of Fano's method and is given in \cref{proof:thm:minimax-frobenius}.

We now turn our attention to the minimax risk under the $\loss\ttinft$ loss. As a preliminary attempt, note that using the identity $\ttinf{A} \ge \norm{A}_F/\sqrt{n} $ \citep[Proposition~6.3]{cape2019two} along with \cref{thm:minimax-frobenius} would suggest that $\loss\ttinft(\hX(D), \X) \gtrsim 1/n$; however, this is suboptimal compared to the rates derived in \cref{thm:X-ttinf}. The next result provides a tight minimax lower bound of the reconstruction error under the $\loss\ttinft$ loss.

\begin{theorem}\label{thm:minimax-ttinf}
    Let $\Theta(\xpar, \sigma)$ be as defined in \cref{eq:Theta}. Then, there exists $\enn_5(\sigma, \kappa, \Rx)$ such that
    \begin{align}
        \inf_{\hX}\sup_{(\X, \Psi, \pi) \in \Theta(\xpar, \sigma)} \pr_{(\X, \Psi, \pi)}\qty{ \loss\ttinft\qty\Big(\hX(D), \X) \gtrsim \textbf{\underline{c}}(\xpar) \sigma\sqrt{\frac{\log{n}}{n}} } \ge \half \qty\big(1 - o(1)),\label{eq:ttinf-minimax}
    \end{align}
    for all ${n > \enn_5(\sigma, \kappa, \Rx)}$ and $\textbf{\underline{c}}(\xpar) = {\kappa}/{1 + \kappa\Rx}$.
\end{theorem}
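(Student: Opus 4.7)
The plan is to apply Fano's method to a packing of $n$ configurations obtained as single-row perturbations of a carefully chosen base $\X^{(0)}$. Choose $\X^{(0)} \in \bbX(\kappa/2,\Rx/2)$, let $v$ be a unit vector along a smallest singular direction of $H\X^{(0)}$ (so $\|(H\X^{(0)})v\|^{2}=n/\kappa^{2}$), and arrange the base additionally so that $\max_{k}|\langle v,(H\X^{(0)})_{k}\rangle|\lesssim 1/\kappa$; this is feasible because the bound $s_{p}(H\X/\sqrt n)\le\ttinf{H\X}/\sqrt p$ forces $\kappa\Rx\gtrsim\sqrt p$, leaving room for a balanced spread in the $v$-direction. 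Set $\rho_{n}:=c_{0}\,\textbf{\underline{c}}(\xpar)\,\sigma\sqrt{\log n/n}$ for a small constant $c_{0}>0$, and define $\X^{(k)}:=\X^{(0)}+\rho_{n}e_{k}v^{\top}$ for $k\in[n]$. A Weyl-type perturbation bound gives $\X^{(k)}\in\bbX(\xpar)$ for all $k$ once $n$ is large.

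\emph{Separation step.} For $k\ne j$, the centered difference $H(\X^{(k)}-\X^{(j)})$ has exactly two nonzero rows (at indices $k,j$), each of norm $\rho_{n}$. To bound $\min_{Q\in\orth{p}}\ttinf{H\X^{(k)}-H\X^{(j)}Q}$ from below, write $W:=Q-I$ and split into two cases. If $\|W\|_{F}\le\rho_{n}/(2\Rx)$, row $k$ of the difference has norm at least $\rho_{n}-\Rx\|W\|_{F}\ge\rho_{n}/2$. If $\|W\|_{F}\ge\rho_{n}/(2\Rx)$, then for $i\notin\{k,j\}$ the difference reduces to $(H\X^{(0)})_{i}W$ up to $O(\rho_{n}/n)$, and the spectral lower bound $s_{p}(H\X^{(0)})\ge\sqrt n/\kappa$ yields $\max_{i}\|(H\X^{(0)})_{i}W\|\ge\|H\X^{(0)}W\|_{F}/\sqrt n\ge\|W\|_{F}/\kappa\ge\rho_{n}/(2\kappa\Rx)$. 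Combining the two cases, $\loss\ttinft(\X^{(k)},\X^{(j)})\gtrsim\rho_{n}/(1+\kappa\Rx)=c_{0}\,\textbf{\underline{c}}(\xpar)\,\sigma\sqrt{\log n/n}$.

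\emph{KL bound and Fano.} Take additive Gaussian noise, $\Psi(\delta,\xi)=\xi$ with $\xi_{ij}\sim N(0,\tilde\sigma^{2})$ iid (up to symmetry), and $\tilde\sigma$ chosen so that \ref{assumption:moments} holds with the prescribed $\sigma$; assumption \ref{assumption:expectation} is automatic since $\E(\Eps)=0$. Then $\mathrm{KL}(P_{k}\|P_{j})=\|\Del^{(k)}-\Del^{(j)}\|_{F}^{2}/(4\tilde\sigma^{2})$. Expanding $\del^{(k)}_{ii'}-\del^{(j)}_{ii'}$ in $\rho_{n}$ and using both $\max_{k}|\langle v,(H\X^{(0)})_{k}\rangle|\lesssim 1/\kappa$ and $\|(H\X^{(0)})v\|^{2}=n/\kappa^{2}$ yields $\|\Del^{(k)}-\Del^{(j)}\|_{F}^{2}\lesssim\rho_{n}^{2}n/\kappa^{2}$, so $\max_{k,j}\mathrm{KL}(P_{k}\|P_{j})\le C c_{0}^{2}\log n$. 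Taking $c_{0}$ small and applying Fano's inequality to the $n$ hypotheses delivers the minimax lower bound $\tfrac{1}{2}(1-o(1))$ claimed in \cref{eq:ttinf-minimax}.

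\emph{Main obstacle.} The crux is the separation step. A direct Frobenius--Procrustes computation gives $\min_{Q}\|H\X^{(k)}-H\X^{(j)}Q\|_{F}\asymp\rho_{n}$, and the monotonicity $\ttinf{A}\ge\|A\|_{F}/\sqrt n$ alone only yields $\loss\ttinft\gtrsim\rho_{n}/\sqrt n$---suboptimal by a factor of $\sqrt n$. The two-case argument above, which exploits both the rank-two support of the perturbation in rows $\{k,j\}$ and the spectral spread of $\X^{(0)}$ to charge the rotation $Q$ on the remaining rows, is what restores the target $\sqrt{\log n/n}$ rate with the precise constant $\textbf{\underline{c}}(\xpar)=\kappa/(1+\kappa\Rx)$.
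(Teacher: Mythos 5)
Your proposal takes a genuinely different route from the paper. The paper uses Le Cam's convex hull method (\cref{lem:le-cam}): it tests the base $\X$ against the uniform mixture $\bprs$ over the $n$ single-row perturbations, so the separation step only needs $\loss\ttinft(\X^k,\X)\ge\eta/2$ (not pairwise separation), and the TV bound (\cref{lemma:tv-bound}) requires controlling both the diagonal $\chi^2$ term and the cross-term $\langle\Del(\X^k)-\Del(\X),\Del(\X^\ell)-\Del(\X)\rangle_F$. Crucially, the paper perturbs in the per-row direction $v(k)=X_k/\norm{X_k}$ and builds $\X$ with the antipodal symmetry $X_{k+m}=-X_k$, so the parallelogram law eliminates the translation part of the rigid motion when lower-bounding $\loss\ttinft(\X^k,\X)$. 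You instead use Fano's method (\cref{lemma:fano}) with a fixed perturbation direction $v$, which requires pairwise separation of all $n$ alternatives; you get this via the two-case analysis on $\|Q-I\|_F$ (small rotation preserves row $k$, large rotation inflates other rows via $s_p(H\X^{(0)})\gtrsim\sqrt n/\kappa$), which avoids the antipodal trick and the cross-term computation entirely. Both routes give the same rate and a comparable constant $\textbf{\underline{c}}(\xpar)$, and your extra flatness requirement $\max_k|\langle v,(H\X^{(0)})_k\rangle|\lesssim1/\kappa$ is achievable and plays a role analogous to the paper's choice of $\eta^2$.

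There is, however, a direction error in your separation step that needs to be repaired: you lower-bound $\min_{Q\in\orth{p}}\ttinf{H\X^{(k)}-H\X^{(j)}Q}$, but this centered-rotation quantity is an \emph{upper} bound for $\loss\ttinft(\X^{(k)},\X^{(j)})$, which also minimizes over translations; lower-bounding it says nothing about $\loss\ttinft$ a priori. The fix is simple but must be made explicit: since the centroid lies within twice the Chebyshev radius of any finite point set, one has
\[
\loss\ttinft(\X^{(k)},\X^{(j)})\;=\;\min_{Q,w}\max_i\bigl\|X^{(k)}_i-QX^{(j)}_i-w\bigr\|\;\ge\;\tfrac{1}{2}\min_{Q}\ttinf{H\X^{(k)}-H\X^{(j)}Q},
\]
so your two-case bound transfers to $\loss\ttinft$ at the cost of a factor of $2$. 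Alternatively, keep $w$ free in the case analysis (as the paper does via the parallelogram identity). You should also sanity-check your choice of base: $\X^{(0)}\in\bbX(\kappa/2,\Rx/2)$ is vacuous for $1<\kappa<2$; what you actually want is a base in the interior of $\bbX(\xpar)$ with $s_p(H\X^{(0)}/\sqrt n)$ bounded away from $1/\kappa$ so the Weyl perturbation keeps $\X^{(k)}\in\bbX(\xpar)$.
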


The proof of \cref{thm:minimax-ttinf} is given in \cref{proof:thm:minimax-ttinf}. Unlike the proof of \cref{thm:minimax-frobenius}, the proof here uses Le Cam's convex hull method \citep{yu1997assouad}. The main challenge lies in constructing a set of configurations that are well-separated in the $\loss\ttinft$ pseudometric, while ensuring their associated joint distributions over the observed dissimilarity matrix $D$ are close in the total variation metric. Similar to the  $\loss\rmse$ loss, \cref{thm:minimax-ttinf} confirms that the upper bound for classical scaling under the $\loss\ttinft$ loss is minimax optimal. Specifically, the dependence of the lower bound in \cref{eq:ttinf-minimax} on the noise parameter $\sigma$ and the sample size $n$ exactly matches the upper bound in \cref{eq:ttinf-concentration-bound}; notably, the additional $\sqrt{\log{n}}$ factor is essential and cannot be improved. The dependence on the parameters $\kappa$ and $\Rx$, however, exhibits a gap of constant order since $\textbf{\underline{c}}(\xpar) < \mathbf{\overline{c}}(\xpar)$. This gap is likely an artifact of the proof technique---owing to the challenging  geometry of $\Rnp$ under the $\ell\ttinft$-norm, and may not be inherent to classical scaling. Specifically, the upper bound in \cref{thm:X-ttinf} is derived using the Frobenius-optimal Procrustes alignment in \cref{eq:procrustes}. It is conceivable that an $\ell\ttinft$-optimal alignment, i.e., $Q' = \argmin_{Q \in \orth{p}}\smallnorm{\hU - UQ}\ttinft$, instead could close this gap and yield sharp bounds. Similarly, for the lower bound in \cref{thm:minimax-ttinf}, the construction of the local packing set in \cref{eq:bbX-1} which are maximally separated $\loss\ttinft$ pseudometric could yield tighter constants in \cref{lemma:X1-properties} if the perturbation $v(k)$ in \cref{eq:bbX-1} could be chosen to be in the orthogonal subspace to each $X_k$. We leave these refinements for future work.

\section{Numerical Experiments}
\label{sec:experiments}

We conduct numerical experiments to empirically validate the theoretical results in \cref{sec:consistency} and examine the effects of noise and the configuration on reconstruction errors. In all cases, points $\X \in \Rnp$ are sampled from a uniform distribution on the unit ball $B_p(\zerov, 1) \subset \Rp$ for $p=3$.

\begin{enumerate}[label=\textbf{Experiment \arabic*.}, ref=Experiment \arabic*, leftmargin=*, itemindent=8em]
    \item\label{exp:1} 
    To examine the influence of the configuration matrix, we transform the points $\X$ using a diagonal matrix $S$ with entries ranging from $1/\kappa$ to $\kappa$. The resulting configuration $\X_\kappa = \X S$ has its points uniformly distributed over an ellipsoid and the singular values of $\X_\kappa/\sqrt{n}$ are bounded in the interval $[1/\kappa, \kappa]$. The noisy dissimilarity matrix is then generated under the additive noise model:
    $$
    D = \Del(\X_\kappa) + \Xi, \quad \text{where} \quad \xi_{ij} \sim t_q, \; q \in \{3, 5, 7\}.
    $$
    As discussed in \cref{sec:background}, the noise satisfies assumptions \ref{assumption:compact}--\ref{assumption:expectation} only for $q=5, 7$, while for $q=3$, the fourth moment does not exist.

    \qquad For the embedding $\hX = \cs{D, p}$ obtained using classical scaling, we compute the reconstruction errors $\loss\rmse(\hX, \X)$ and $\loss\ttinft(\hX, \X)$ after a Procrustes alignment. Figures~\ref{fig:p11}~\textit{\&}~\ref{fig:p12} display the results on a $\log$-$\log$ scale over $10$ trials, with shaded regions indicating empirical 80\% intervals. The dashed lines represent the theoretical rates. The results illustrate the effect of both the singular value bound $\kappa \in \{1, 1.25, \dots, 2\}$  and the noise level $\sigma \in \{0.1, 0.25, 0.5\}$ on reconstruction performance.

    \qquad In \cref{fig:p11}, for $q > 4$, the reconstruction error $\loss\rmse$ follows the expected bound $O(\sigma\kappa\sqrt{1/n})$ from \cref{cor:frobenius-norm}. However, for $q = 3$ (first column), the error does not exhibit the same behavior. Similarly, in \cref{fig:p12}, the error $\loss\ttinft$ scales correctly with $\sigma$ and exhibits the expected $O(\sqrt{\log{n}/n})$ dependence on sample size $n$. Empirically, the dependence on $\kappa$ appears to scale as $O(\kappa^2)$.

    \item\label{exp:2} 
    To examine the sensitivity of classical scaling to choice of the noise model, we generate the dissimilarity matrix
    \[
    D = \Del(\X) + \Eps, \quad \text{where} \quad \Eps = \Psi(\Del, \Xi),
    \]
    for the noise models in \cref{tab:noise-models}. The random component $\Xi = (\xi_{ij})$ is sampled from a Student's~$t$-distribution, i.e., $\xi_{ij} \sim t_q$ for $q \in \{3, 5, 7\}$. 

    \qquad For $\hX = \cs{D, p}$, Figures~\ref{fig:p21}~\textit{\&}~\ref{fig:p22} show the reconstruction errors $\loss\rmse(\hX, \X)$ and $\loss\ttinft(\hX, \X)$, respectively, after a Procrustes alignment averaged across $10$ trials. The shaded regions indicate the $80\%$ empirical intervals and the black dashed lines represent the theoretical rates. The results confirm that classical scaling provides consistent recovery across all noise models. The reconstruction error follows the predicted rates—$O(\sigma\sqrt{1/n})$ in \cref{fig:p21} and $O(\sigma\sqrt{\log{n}/n})$ in \cref{fig:p22}. Notably, for $q=3$, where the noise lacks a finite fourth moment, the reconstruction error exhibits uncontrolled fluctuations, confirming the necessity of the moment condition.
\end{enumerate}

\begin{figure}
    \centering
    \includegraphics[width=1.0\linewidth]{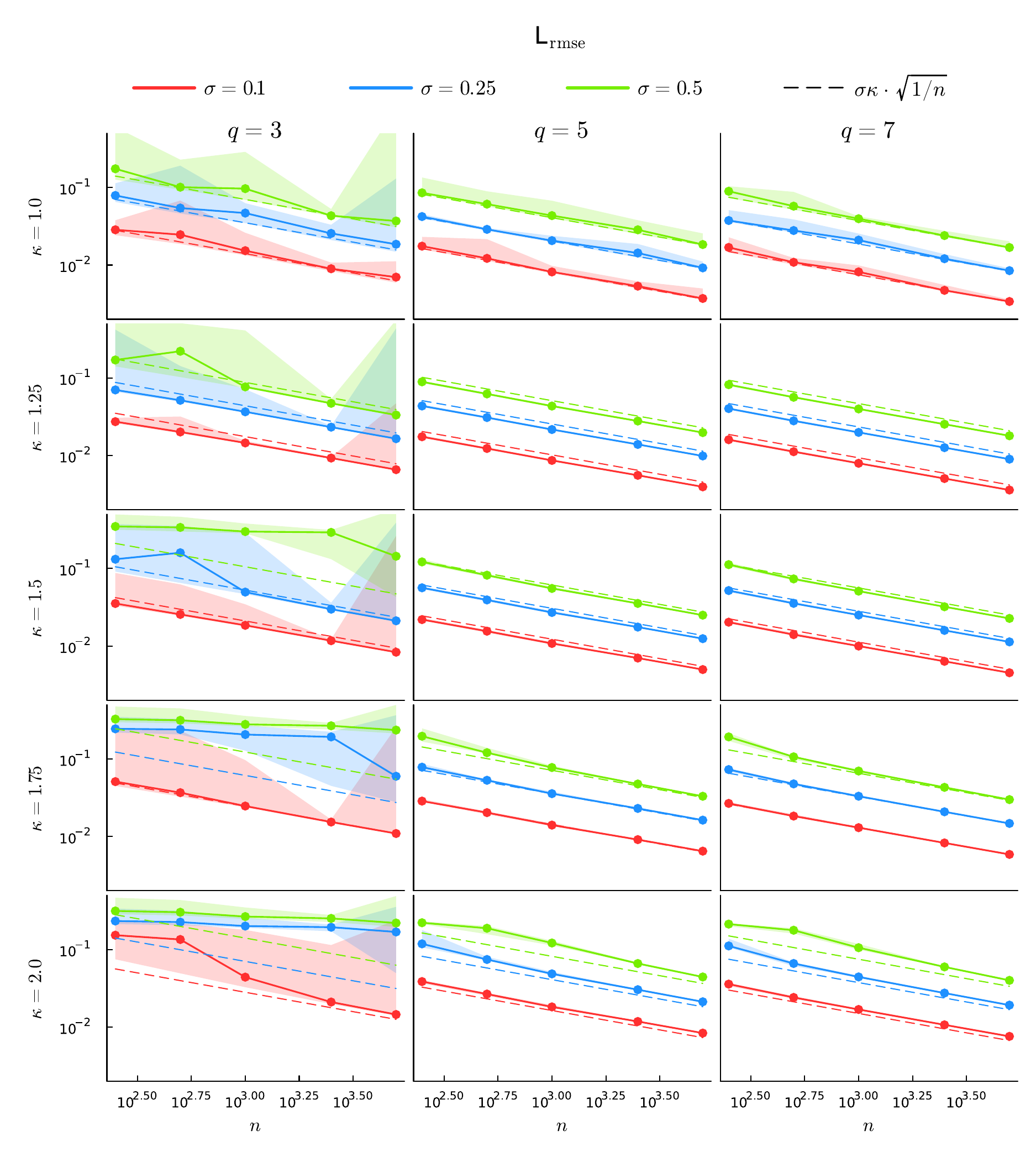}
    \caption{$\loss\rmse(\hX, \X)$ reconstruction error vs. $n$ for the setup in \ref{exp:1} on a $\log$-$\log$ scale.}
    \label{fig:p11}
\end{figure}

\begin{figure}
    \centering
    \includegraphics[width=1.0\linewidth]{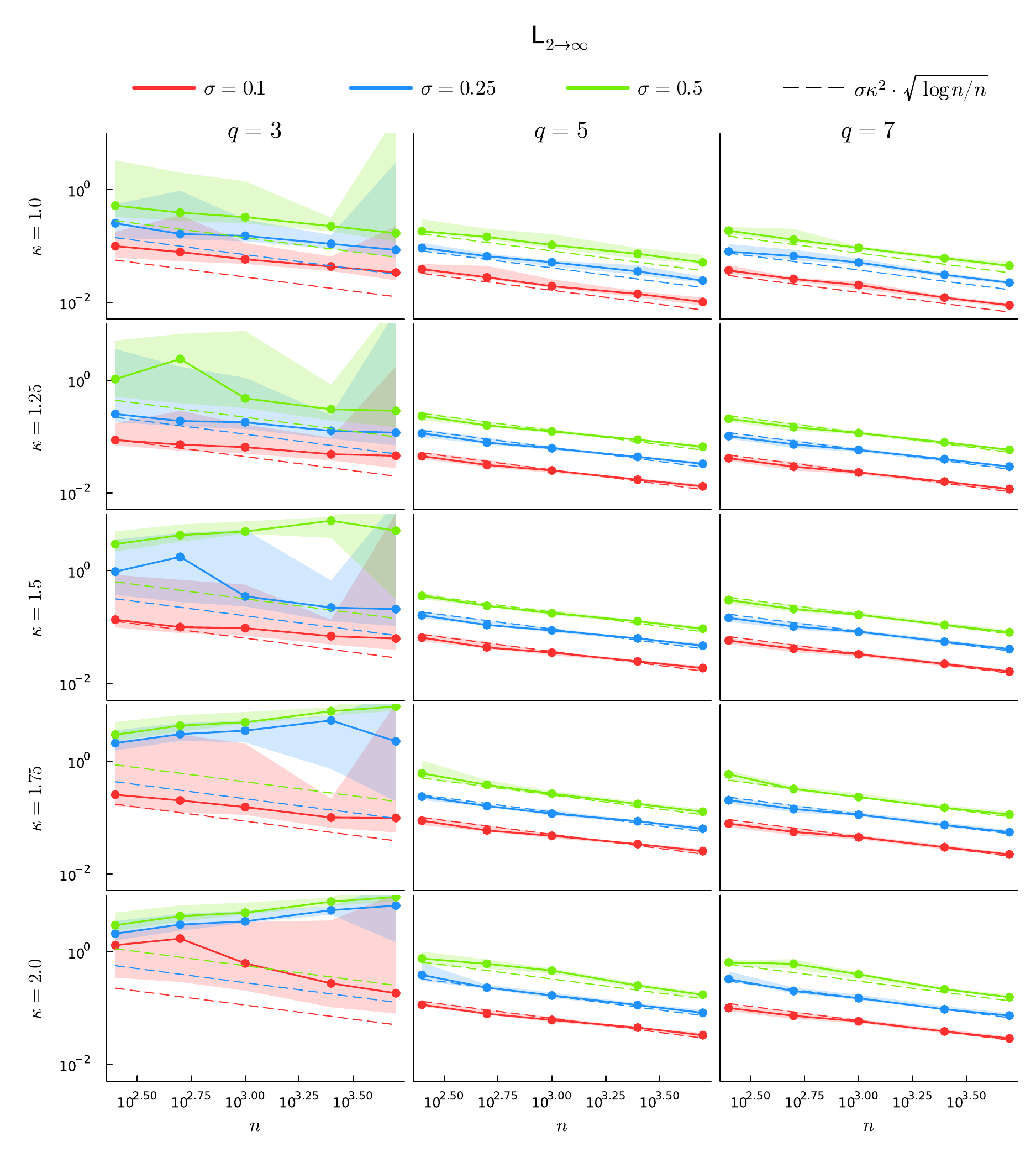}
    \caption{$\loss\ttinft(\hX, \X)$ reconstruction error vs. $n$ for the setup in \ref{exp:1} on a $\log$-$\log$ scale.}
    \label{fig:p12}
\end{figure}

\begin{figure}
    \centering
    \includegraphics[width=1.0\linewidth]{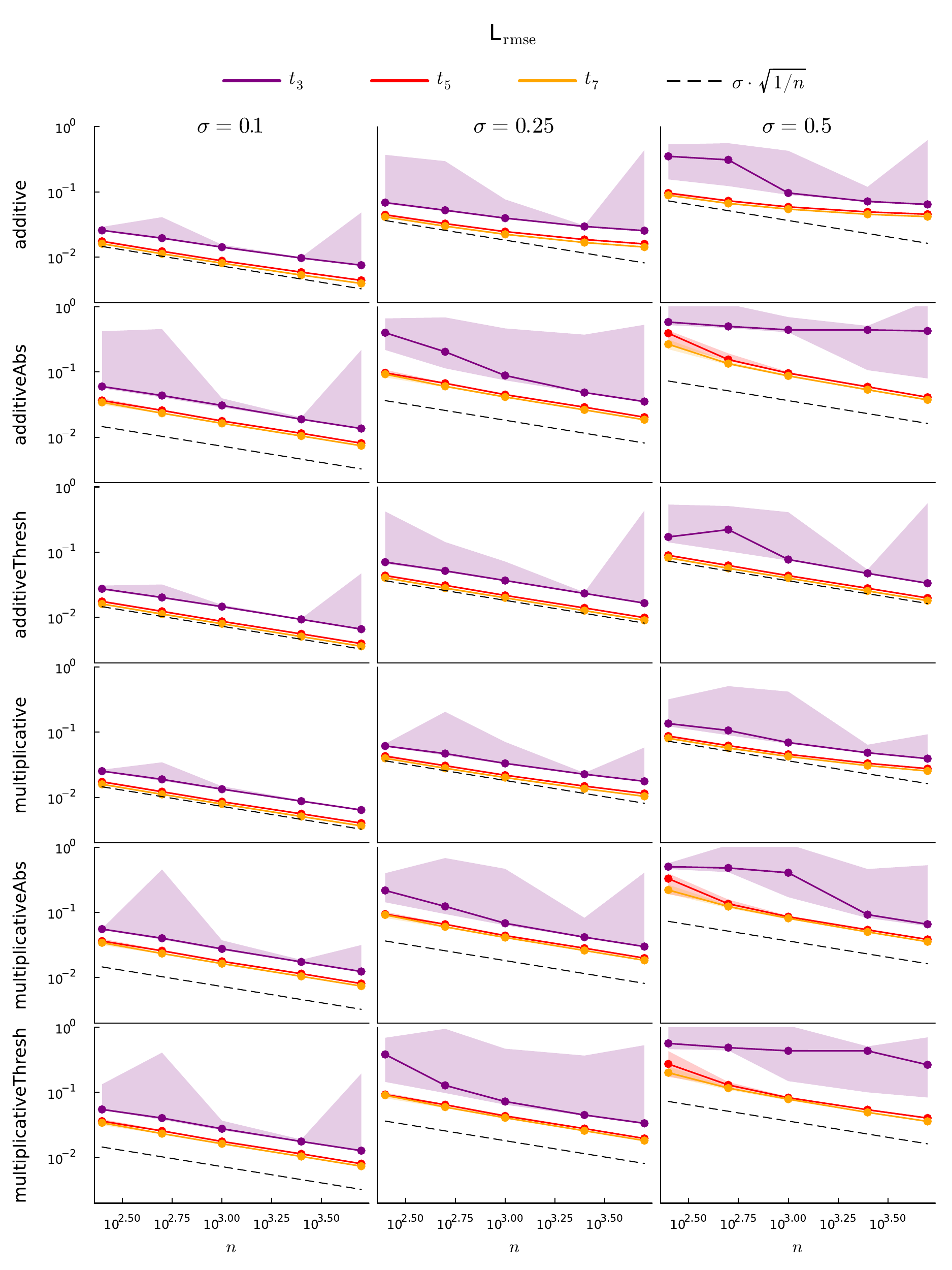}
    \caption{$\loss\rmse(\hX, \X)$ reconstruction error vs. $n$ for the setup in \ref{exp:2} on a $\log$-$\log$ scale.}
    \label{fig:p21}
\end{figure}

\begin{figure}
    \centering
    \includegraphics[width=1.0\linewidth]{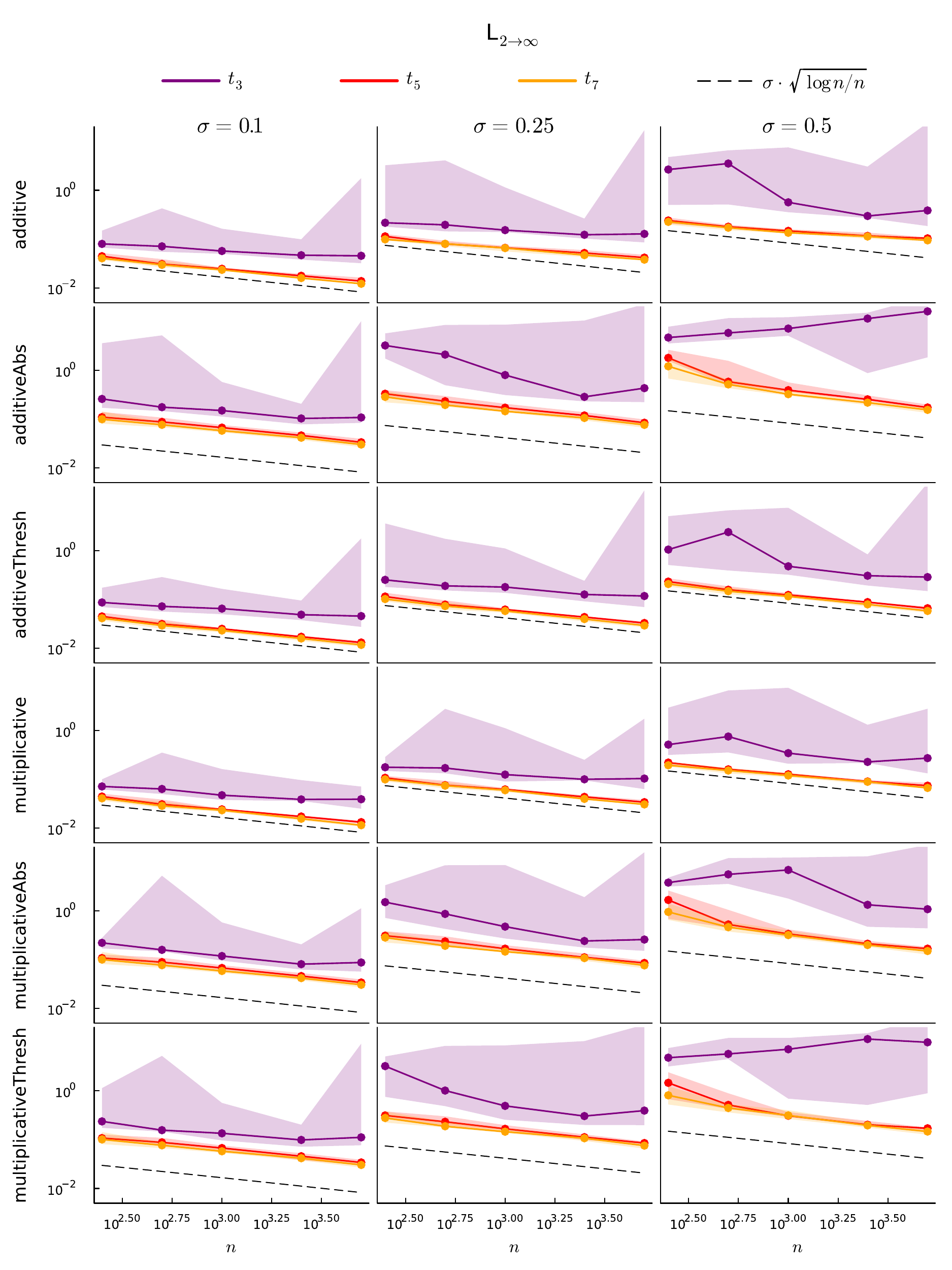}
    \caption{$\loss\ttinft(\hX, \X)$ reconstruction error vs. $n$ for the setup in \ref{exp:2} on a $\log$-$\log$ scale.}
    \label{fig:p22}
\end{figure}



\section{Discussion}
\label{sec:discussion}

Classical scaling is a simple yet powerful algorithm that has been widely used in a variety of applications across numerous disciplines. Given a noisy dissimilarity matrix, the results in this work establish that classical scaling guarantees consistent and minimax-optimal recovery of the latent configuration under a broad class of noise models and under minimal assumptions. This confirms that classical scaling is not only a practical choice but also a theoretically sound method that achieves the fundamental limits of recovery in noisy settings.

The primary limitations of classical scaling in practice stem not from the algorithm itself, but rather from the assumptions on the underlying data. The theoretical guarantees rely on the assumption that the true dissimilarities are realizable in Euclidean space. If the original dissimilarities---even before the addition of noise---are non-Euclidean, then the same guarantees no longer hold. Several alternative methods, e.g., ISOMAP \citep{tenenbaum2000global} and Maximum Variance Unfolding (MVU) \citep{weinberger2006graph}, have been developed to handle non-Euclidean data and are known to be consistent under various assumptions. However, their performance in the presence of noise is less well understood.

Another key limitation concerns the assumptions on the noise distribution. As demonstrated in \cref{sec:experiments}, the performance of classical scaling deteriorates significantly when the noise lacks a finite fourth moment. While some prior work has touched on robustness in this context \citep[e.g.,][]{cayton2006robust,mandanas2016robust}, formal statistical analyses remain relatively under-explored, and is an important direction for future research. Additionally, the dependence on the parameters $\kappa$ and $\Rx$ in \cref{thm:X-ttinf} and \cref{thm:minimax-ttinf} leaves a gap of constant order: $\mathbf{\underline{c}}(\xpar) < \mathbf{\overline{c}}(\xpar)$. Closing this gap will likely require a more refined analysis of the geometry of $\Rnp$ under the $\ell\ttinft$-norm, particularly in relation to rigid transformation alignment and packing arguments.

Beyond the finite-dimensional setting, recent work has explored generalizations of classical scaling to metric measure spaces \citep{adams2020multidimensional,kroshnin2022infinite,lim2024classical}, and extends the framework to an infinite-dimensional analogue distinct from the high-dimensional regime considered in \cite{little2023analysis,peterfreund2021multidimensional}. Developing statistical performance guarantees and minimax bounds for these settings, particularly in the Gromov-Wasserstein metric, is an interesting avenue for future work.



\section{Proofs}
\label{sec:proofs}

Throughout the proofs we use {$a_n \lesssim b_n$ to denote that there exists a constant $C > 0$ such that $a \le Cb$}, which may depend on the fixed, deterministic parameters of the model, and may change from line to line, but \textit{does not} depend on the sample size $n$.


\subsection{Proof of \cref{lemma:iid}}
\label{proof:lemma:iid}

Let $X_1, X_2, \dots, X_n \simiid{} F \in \F(\xpar)$ be the rows of $\X \in \Rnp$, and for $\bx = X\tr\onev$ let $\tx_i = X_i - \bx$ denote the rows of $H\X$. Since ${\diam(\supp(F)) \le \Rx}$, it follows that $\smallnorm{X_i - \bx} \le \Rx$ a.s. for all $i \in [n]$, and, therefore, 
\begin{align}
    \ttinf{H\X} = \max_{i \in [n]}\norm{X_i - \bx} \le \Rx \qq{a.s.}\label{eq:row-bound}
\end{align}
Moreover, since each $\smallnorm{\tx_i} \le \Rx$, from \citet[Example~2.5.8]{vershynin2018high}, $\tx_i$ is sub-Gaussian with parameter $\Rx$ and for any $A \in \R^{p \times p}$ the Orlicz norm $\smallnorm{A\tx_i}_{\psi_2} \le \opnorm{A}\Rx$. Let $\Sigma = \cov(F)$ and $\hat\Sigma = \frac{1}{n}(H\X)\tr(H\X)$ denote the sample covariance matrix of $\X$, and note that $\opnorm\big{\hat\Sigma} = s_1(H\X/\sqrt{n})^2$ and $\opnorm\big{\hat\Sigma\inv}\inv = s_p(H\X/\sqrt{n})^2$. 

From \citet[Theorem~4.7.1 and Example~4.7.3]{vershynin2018high}, for all $n > 1$ with probability greater than $1 - 2n^{-2}$,
\begin{align}
    \opnorm\big{\hat\Sigma - \Sigma} \lesssim \opnorm{\Sigma}\Rx^2 \qty( \sqrt{\frac{p + \log{n}}{n}} ). \label{eq:covariance}
\end{align}
For the inverse covariance matrix, let $\enn_0 \equiv \enn_0(\xpar)$ be such that $n \gtrsim (p+ \log{n}) (\kappa\Rx)^4$ for all $n > \enn_0$. Then, from \citet[Corollary~11]{kereta2021estimating} it follows that for $n > \enn_0$ and with probability greater than $1 - 2n^{-2}$,
\begin{align}
    \opnorm\big{\hat\Sigma\inv - \Sigma\inv} \lesssim \opnorm{\Sigma\inv}^2\Rx^2 \qty( \sqrt{\frac{p + \log{n}}{n}} ).\label{eq:precision}
\end{align}
Let $\alpha_n = \kappa^2\Rx^2\sqrt{p+\log{n}/ n}$. On the event that \cref{eq:covariance} holds, using the triangle inequality and by noting that $\kappa > 1$ from assumption~\ref{assumption:compact},
\begin{align}
    \opnorm\big{\hat\Sigma} \le \opnorm{\Sigma} + \opnorm\big{\hat\Sigma - \Sigma} \lesssim \opnorm{\Sigma} (1 + \alpha_n).\label{eq:opnorm}
\end{align}
Using the fact that $\opnorm{\Sigma} \le \kappa^2$ and $\sqrt{1+z}\le1+z$ for all $z\ge 0$, we have that
\begin{align}
    s_1\qty(\frac{H\X}{\sqrt{n}}) = \opnorm\big{\hat\Sigma}^{1/2} \lesssim \qty\Big(\opnorm{\Sigma} (1+\alpha_n))^{1/2} \le \kappa \qty(1 + \alpha_n)\label{eq:singular-1}
\end{align}
with probability greater than $1 - 2n^{-2}$. Similarly, on the event that \cref{eq:precision} holds, since ${\smallnorm{\Sigma\inv}_2 \le \kappa^2}$ we have
\begin{align}
    \opnorm\big{\hat\Sigma\inv} \le \opnorm\big{\Sigma\inv} + \opnorm\big{  \hat\Sigma\inv - \Sigma\inv} \lesssim \opnorm{\Sigma\inv} (1 + \alpha_n), \label{eq:opnorm-inv}
\end{align}
and it follows that for all $n > \enn_0$,
\begin{align}
    s_p\qty(\frac{H\X}{\sqrt{n}}) = \opnorm\big{\hat\Sigma\inv}^{-\half} \gtrsim \qty( \frac{\opnorm{\Sigma\inv}\inv}{1+\alpha_n} )^{\half} \gtrsim \frac{1}{\kappa\sqrt{1+\alpha_n}} \ge \frac{1}{\kappa(1+\alpha_n)}.\label{eq:singular-2}
\end{align}
with probability greater than $1 - 2n^{-2}$. Combining \cref{eq:singular-1,eq:singular-2}, it follows that\newline ${\X \in \bbX\qty\big(\kappa(1+\alpha_n), \Rx)}$ with probability greater than $1 - 4n^{-2}$. \QED


\subsection{Proof of \cref{lemma:noise-models}}
\label{proof:lemma:noise-models}

\textit{Claim~\ref{lem:noise-1}.} Let $W = \E(\Eps) = \E_{\Del, \Xi}( \Psi(\Del, \Xi) )$. By independence of $\Del$ and $\Xi$, and since the rows $\X$ are \iid{} from $F$ and the entries of $\Xi$ are \iid{} with $\E(\xi_{ij}) = \gamma$, it follows that there exists some fixed $w \in \R$ such that
\begin{align}\label{eq:Expectation-Eps}
    W_{ij} = \E\qty(\Psi(\del_{ij}, \xi_{ij})) = \begin{cases}
        w, & i \neq j\\
        0, & i = j
    \end{cases}
    \qq{}\qq{i.e.,} W = w(J-I).
\end{align}
Since $H = I - J/n$ is a projection matrix, $H(J-I) = -H$ and $H(J-I)H = -H^2 = -H$, and it follows that
\begin{align}
    \norm{H \,\E(\Eps) \, H}_\infty = \norm{-w H}_\infty = \max_{i \in [n]}\sum_{j \in [n]} \abs{w H_{ij}} = \abs{w} \qty(1 + \sum_{j \neq i}\abs{1 - \frac{1}{n}}) \le 2\abs{w} \lesssim \sqrt{n}.\FINEQ
\end{align}

\noindent\textit{Claim~\ref{lem:noise-2}.} When $\X$ is fixed, let $W' = \E(\Eps) = \E_{\Xi}(\Psi(\Del, \Xi))$. If $\xi_{ij}$ are \iid{} symmetric, zero mean random variables, then $\E(\xi^{2k+1}) = 0$ for all $k$, and
\begin{align}
    \E(\Psi(\del_{ij}, \xi_{ij})) = \sum_{k=1}^K g_k(\del_{ij})\E(\xi_{ij}^{2k+1}) + \sum_{\ell=1}^{L} \E(\xi_{ij}^{\ell}) = \sum_{\ell=1}^{L} \E(\xi_{ij}^{\ell}) =: w'.
\end{align}
Then, $W' = \E(\Eps) = w'(J-I)$ and, similar to claim~\ref{lem:noise-1}, it follows that $\norm{H \,\E(\Eps) \, H}_\infty \le 2\abs{w'} \lesssim \sqrt{n}$. Moreover, if $\xi_{ij}$ admits finite moments up to order $q' = q \cdot \max\qty{L, 2K+1}$, then $\eps_{ij} = \Psi(\del_{ij}, \xi_{ij})$ admits finite moments up to order $q$. Lastly, for the additive noise model in \cref{tab:noise-models}, since $\Psi(\del_{ij}, \xi_{ij}) = \xi_{ij}$, the zero mean assumption on $\xi_{ij}$ can be relaxed to $\E(\xi_{ij}) = \gamma$. In this case, $\E(\Eps) = \gamma(J-I)$, and the result follows from an identical argument. \qed

\subsection{Proof of \cref{lem:D-opnorm}}
\label{proof:lem:D-opnorm}

From \cref{eq:Dc}, we have $\Dc - \Delc = -\half H\Eps H$, and from the triangle inequality,
\begin{align}
    \opnorm{\Dc - \Delc} 
    = \half\opnorm{H \Eps H} &\le \half \qty( \opnorm\big{H \,\E(\Eps) \, H} + \opnorm\big{H\: \qty\big(\Eps - \E(\Eps)) \: H})\\
    &\lesssim \underbrace{\opnorm\big{H \,\E(\Eps) \, H}}_{=:\Circled{1}} + \underbrace{\opnorm\big{\Eps - \E(\Eps)}}_{=:\Circled{2}}.\label{eq:delc-opnorm}
\end{align}
where the final inequality follows from the fact that $\opnorm{HAH} \le \opnorm{A}$ since $H$ is a projection matrix with $\opnorm{H} = 1$. Note that $\Circled{1}$ is deterministic and from \citet[Corollary~6.1.5]{horn2012matrix} and assumption~\ref{assumption:expectation},
\begin{align}
    \Circled{1} = \opnorm{H \,\E(\Eps) \, H} \le \norm{H \,\E(\Eps) \, H}_{\infty} \lesssim \sqrt{n}.
\end{align}
We bound $\Circled{2}$ from above in expected value and in tail probability via symmetrization \citep[Chapter~2.3~of][]{van2023weak}. To this end, let $\Eps'$ be an \iid{} copy of $\Eps$. Let $\Eps' \indep \Eps$ be an \iid{} copy of $\Eps$. Then, by an application of Jensen's inequality,
\begin{align}
    \E\opnorm{\Eps - \E(\Eps)} = \E_{\Eps}\opnorm{\Eps - \E_{\Eps'}(\Eps')} = \opnorm{\E_{\Eps'}\qty[\Eps - \Eps']} \le \E_{\Eps, \Eps'}\qty[ \opnorm{\Eps - \Eps'} ],
\end{align}
where $\E_{\Eps,\Eps'}$ is the expected value jointly over $\Eps,\Eps'$. By symmetrization, for each $i, j \in [n]$,
\begin{align}
    (\eps_{ij} - \eps_{ij}') \stackrel{d}{=} (\eps_{ij}' - \eps_{ij}) \stackrel{d}{=} g_{ij}\cdot (\eps_{ij} - \eps_{ij}'),
\end{align}
where $g_{ij} \in \qty{-1, +1}$. Taking $G := (g_{ij})$ to be the symmetric matrix with \iid{} Rademacher random variables with $G \indep \qty{\Eps, \Eps'}$, and using the triangle inequality,
\begin{align}
    \E_{\Eps,\Eps'}\qty\big[ \opnorm{\Eps - \Eps'} ] = \E_{G,\Eps,\Eps'}\qty\big[ \opnorm{G \circ (\Eps - \Eps')}] \le 2\E_{\Eps,G}\qty\big[\opnorm{G \circ \Eps}].
\end{align}

For $\Eps = \Psi(\Del, \Xi)$, using the law of total expectation, we have
\begin{align}
    \E\opnorm{\Eps - \E(\Eps)} \lesssim \E_{\Eps,G}\qty\big[\opnorm{G \circ \Eps}] = \E_{\Del, \Xi, G}\qty\big[\opnorm{G \circ \Eps}] = \E_{\Del}\qty\Big{ \E_{\Xi, G}\qty\big[ \opnorm{G \circ \Eps} \mid \Del] }.\label{eq:opnorm-expectation}
\end{align}
Conditional on $\Del$, note that $G \circ \Eps$ are independent, mean zero random variables. Using \citet[Theorem~2]{latala2005some} (see \cref{prop:latala}) for the conditional expectation in \cref{eq:opnorm-expectation},
\begin{align}
    \E_{\Xi, G}\qty\Big[\opnorm{G \circ \Eps} \mid \Del] \lesssim \max_{i}\sqrt{\sum_{j}\E[g_{ij}^2\eps_{ij}^2 \mid \del_{ij}]} + \sqrt[4]{\sum_{i, j}\E[g_{ij}^4\eps_{ij}^4 \mid \del_{ij}]}.\label{eq:opnorm-conditional-expectation}
\end{align}
By noting that $g_{ij}^2=g_{ij}^4 = 1$, and from \ref{assumption:moments}, we have 
\begin{align}
    \E[g_{ij}^2\eps_{ij}^2 \mid \del_{ij}] = \E[\eps_{ij}^2 \mid \del_{ij}] \le \sigma^2 \qq{and} \E[g_{ij}^4\eps_{ij}^4 \mid \del_{ij}] = \E[\eps_{ij}^4 \mid \del_{ij}] \le \sigma^4.\label{eq:moment-bounds}
\end{align}
Since the above bounds hold uniformly over $\Del = (\del_{ij})$, there exists an absolute constant $C_1 > 0$ such that
\begin{align}
    \E\opnorm{\Eps - \E(\Eps)} \lesssim \E_{\Del}\qty\Big{\E_{\Xi, G}\qty\big[\opnorm{G \circ \Eps}\mid \Del] } \le C_1 \sigma\sqrt{n}.\label{eq:opnorm-deviation}
\end{align}
Plugging \cref{eq:opnorm-deviation} back into \cref{eq:delc-opnorm} and using the bound on $\Circled{1}$, we get
\begin{align}
    \E\opnorm{\Dc - \Delc} \lesssim \sigma\sqrt{n}.
\end{align}

For the tail bound in \cref{eq:D-opnorm-1}, using the symmetrization of tail probabilities from \cref{lem:opnorm-symmetrization} with $\Y = {\Eps - \E(\Eps)}$, $N = -\E(\Eps)$ and $t > 4\E\opnorm{\Eps - \E(\Eps)}$,
\begin{align}
    \pr_{\Eps}\qty( \opnorm{\Eps - \E(\Eps)} > t ) \le 4 \pr_{\Eps, G}\qty(\opnorm{G \circ \Eps} > \frac{t}{4}) = 4 \E_{\Del} \qty[ \pr_{\Xi, G}\qty(\opnorm{G \circ \Eps} > \frac{t}{4} \mid \Del) ],\label{eq:opnorm-tail-bound}
\end{align}
where the last equality follows from the law of total expectation. Since $G \circ \Eps$ has independent, mean zero entries conditional on $\Del$, using \cref{prop:latala-quantitative} for the conditional probability in the r.h.s. of \cref{eq:opnorm-tail-bound} it follows that, for $0 < r < (q-4)/2$, there exists an absolute constant $C_2 > 0$ such that
\begin{align}
    \pr_{\Xi, G}\qty\Bigg( \opnorm{G \circ \Eps} > C_2 \qty\Bigg(\underbrace{{\max_{i}\sqrt{\sum_{j}\E [g_{ij}^2\eps_{ij}^2 \mid \del_{ij}]}} + \max_{i,j}\qty\Big(\E{[|g_{ij}\eps_{ij}|^q  \mid \del_{ij}]})^{1/q} n^{\frac{r+2}{q}}\sqrt{\log{n}}}_{=:\Circled{3}}) \mid \Del ) \le \frac{1}{n^2} + \frac{1}{n^r}.\label{eq:opnorm-tail-bound-2}
\end{align}
Similar to \cref{eq:opnorm-conditional-expectation}, we have 
\begin{align}
    \Circled{3} \le \sigma \sqrt{n} + \sigma n^{\frac{r+2}{q}}\sqrt{\log{n}}.\label{eq:circled-3}
\end{align}
Since $(r+2)/q < \half$ for all $0 < r < (q-4)/2$, for an absolute constant $C_3 > 0$ there exists $\enn_1(r, q)$ such that for all $n > \enn_1(r, q)$,
\begin{align}
    \Circled{3} \le \sigma \sqrt{n} + \sigma n^{\frac{r+2}{q}}\sqrt{\log{n}} \le C_3\sigma\sqrt{n},
\end{align}
and,
\begin{align}
    \pr_{\Xi, G}\qty\Big( \opnorm{G \circ \Eps} > C_2C_3 \sigma\sqrt{n} ) \le \pr_{\Eps, G}\qty\Big( \opnorm{G \circ \Eps} > C_2 \cdot \Circled{3} \mid \Del ) \le \frac{1}{n^2} + \frac{1}{n^r}.\label{eq:opnorm-tail-bound-3}
\end{align}
Setting $t = C\sigma\sqrt{n}$ where $C > \max\qty{4C_1, 8C_2C_3}$, it follows that $t > 4\E\opnorm{\Eps - \E(\Eps)}$ for all $n > N_0$, and satisfies the requirement of \cref{lem:opnorm-symmetrization}. Note that the l.h.s. and r.h.s. of \cref{eq:opnorm-tail-bound-3} do not depend on $\Del$; therefore, integrating \cref{eq:opnorm-tail-bound-3} w.r.t. $\pr_{\Del}$ and plugging it back into \cref{eq:opnorm-tail-bound}, we get
\begin{align}
    \pr_{\Eps}\qty\Big( \opnorm{\Eps - \E(\Eps)} > C\sigma\sqrt{n} ) \le \frac{4}{n^2} + \frac{4}{n^r}.
\end{align}
Moreover, using the bound on $\Circled{1}$ in \cref{eq:delc-opnorm}, it follows that
\begin{align}
    \opnorm{\Dc - \Delc} \lesssim \sigma \sqrt{n}\label{eq:opnorm-tail-bound-4}
\end{align}
with probability greater than $1 - 4(n^{-2} + n^{-r})$ for all $n > \enn_1(r, q)$, which completes the proof.
\QED

\subsection{Proof of \cref{thm:X-opnorm}}
\label{proof:thm:X-opnorm}

Let $\bx = \frac{1}{n}\X\tr\onev$, and for every $Q \in \orth{p}$ let $g_Q(x_i) = Q\tr(x_i - \bx)$. Note that
$$
\norm{g_Q(x_i) - g_Q(x_j)} = \norm{Q\tr(x_i - \bx) - Q\tr(x_j - \bx)} = \norm{x_i - x_j} = \del_{ij},
$$
and $g_Q(\X) = (H\X)Q$ is a rigid transformation of the rows of $\X$. Therefore,
\begin{align}
    \min_{g \in \euc{p}}\opnorm\big{\hX - g(\X)} \le \min_{Q \in \orth{p}}\opnorm\big{\hX - g_Q(\X)} = \min_{Q \in \orth{p}}\opnorm\big{\hX - \Xc Q},\label{eq:gmin-opnorm}
\end{align}
where, for brevity, $\Xc = H\X$ is the centered configuration. Let $\Xc\pinv := (\Xc\tr\Xc)\inv\Xc\tr$ denote the Moore-Penrose pseudoinverse of $\Xc$ and $\tau^2 := \opnorm{\Dc - \Delc}$. Corollary~1 from \cite{arias2020perturbation} establishes that whenever $\smallnorm{\X\pinv}_2 \cdot \tau \le 1/\sqrt{2}$,
\begin{align}
    \min_{Q \in \orth{p}}\opnorm\big{\hX - \Xc Q} \le (1 + \sqrt{2}) \norm\big{\Xc\pinv} \tau^2.\label{eq:perturbation}
\end{align}
From assumption~\ref{assumption:compact}, we have
\begin{align}
    \opnorm\big{\Xc\pinv} 
    = \opnorm\big{ (\Xc\tr\Xc)\inv \Xc\tr } 
    = \sqrt{ \lambda_1\qty\big({(\Xc\tr\Xc)^{\inv}}) }
    = \qty(n\lambda_p\qty( \tfrac{\Xc\tr\Xc}{n} ))^{-1/2} \le \frac{\kappa}{\sqrt{n}}.
\end{align}
Moreover, from \cref{lem:D-opnorm}, there exists $\enn_1 \equiv \enn_1(r, q)$ and and absolute constant $C > 0$ such that for all $n > \enn_1$,
$$
\tau^2 \le C\sigma\sqrt{n}
$$ 
with probability greater than $1 - O(n^{-1} + n^{-r})$. For $n > 4C^2 {\sigma^2\kappa^4} =: n_2(\sigma, \kappa)$, we further have
\begin{align}
    \opnorm\big{\Xc\pinv} \cdot \tau \le \frac{\kappa\sqrt{C\sigma}}{n^{1/4}} \le \frac{1}{\sqrt{2}},
\end{align}
which satisfies the hypothesis for \citet[Corollary~1]{arias2020perturbation}. Plugging the bound from \cref{eq:perturbation} back into \cref{eq:gmin-opnorm}, it follows that for all $n > \enn_2(r, q, \sigma, \kappa) = \max\qty{\enn_1, n_2}$, 
\begin{align}
    \min_{g \in \euc{p}}\opnorm\big{\hX - g(\X)} \le (1+\sqrt{2})\opnorm\big{\Xc\pinv} \tau^2 \lesssim {\sigma\kappa}
\end{align}
with probability greater than $1 - O(n^{-2} + n^{-r})$.
\QED

\subsection{Proof of \cref{cor:frobenius-norm}}
\label{proof:cor:frobenius-norm}

For every $g \in \euc{p}$, we have $\smallnorm{\hX - g(\X)}_F \le \sqrt{p} \cdot \smallnorm{\hX - g(\X)}_2$. Therefore, from \cref{thm:X-opnorm}, it follows that
\begin{align}
    \loss\rmse(\hX, \X) 
    &= \qty(\min_{g \in \euc{p}}\frac{1}{n}\smallnorm{\hX - g(\X)}^2_F)^{1/2}\\ 
    &\le \qty(\frac{p}{n}\min_{g \in \euc{p}}\smallnorm{\hX - g(\X)}^2_2)^{1/2} \lesssim \sigma\kappa\sqrt{\frac{p}{n}},\label{eq:precise-frobenius-norm}
\end{align}
with probability greater than $1 - O(n^{-2} + n^{-r})$ for all $n > \enn_2(r, q, \sigma, \kappa)$.
\QED

\subsection{Proof of \cref{thm:X-ttinf}}
\label{proof:thm:X-ttinf}

For $\hX = \hU\hL^{1/2} = \cs{D, p}$, $H\X Q\tr = U\L^{1/2} = \cs{\Del, p}$ and for $\bx = \X\tr\onev/n$, let $g_*(v) = Q_*\tr Q(v - \bx)$ be the rigid transformation given in \cref{eq:g-star}. From Lemma~\ref{lem:decomposition}, we have the decomposition

\begin{align}
    \hX - g_*(\X) &= 
    \qty\big(\Dc - \Delc) \qty\big(\hU - UQ_*) \hL^{-1/2}         \tag{$=:R_1$}\\
    &\qq{}+U\L \qty\big(U\tr\hU - Q_*)\hL^{-1/2}                  \tag{$=:R_2$}\\
    &\qq{}+\Dc U \qty\big( Q_* \hL^{-1/2} - \L^{-1/2}Q_* )        \tag{$=:R_3$}\\
    &\qq{}+\qty\big(\Dc - \Delc) U \L^{-1/2}Q_*,                 \tag{$=:R_4$}
\end{align}
and, by the triangle inequality,
\begin{align}
    \loss\ttinft(\hX, \X) \le \ttinf\big{\hX - g_*(\X)} &\le \ttinf{R_1} + \ttinf{R_2} + \ttinf{R_3} + \ttinf{R_4}.
\end{align}
Let $A := \qty\Big{ \opnorm{\Dc - \Delc} \le C\sigma\sqrt{n} }$ be the event from \cref{lem:D-opnorm} for which $\pr(A^c) = O(n^{-2} + n^{-r})$ for all $n > \enn_1(r, q)$. The outline of the proof is as follows.
\begin{enumerate}
    \item On the event $A$, we bound the $\ell_{2\to\infty}$-operator norm of $R_1, R_2$ and $R_3$ using \cref{lem:gram} and \cref{lem:eigen} with $K_\eps = C \sigma $.
    \item We use \cref{prop:ttinf-concentration} to bound $\ttinf{R_4}$, and combine the results using a union bound.
\end{enumerate}

Throughout the proof we use the three properties of the $\ell_{2\to\infty}$-operator norm collected in \cref{prop:cape-ttinf}.

\noindent\textbf{Bound for $R_1, R_2, R_3$ on the event $A$.} Using the fact that $\ttinf{R_1} \le \opnorm{R_1}$, on the event $A$ we have
\begin{align}
    \ttinf{R_1} 
    &\le \opnorm\big{\qty\big(\Dc - \Delc) \qty\big(\hU - UQ_*) \hL^{-1/2}}\\
    &\le \opnorm\big{\Dc - \Delc} \cdot \opnorm\big{\hU - UQ_*} \cdot \opnorm\big{\hL^{-1/2}}\\
    &\lesssim  \sigma \sqrt{n} \cdot \frac{ \sigma\kappa^2 }{\sqrt{n}} \cdot \frac{\kappa}{\sqrt{n}}
    \quad\lesssim\quad \frac{ \sigma^2\kappa^3}{\sqrt{n}},\label{eq:ttinf-R1}
\end{align}
where the third line follows from \cref{lem:D-opnorm}, \cref{lem:eigen}\,\ref{lem:eigen-2} and \cref{lem:eigen}\,\ref{lem:eigen-1} with $K_\eps = C(\sigma + M)$. Similarly, using \cref{eq:ttinf-ABC} for ${R_2}$
\begin{align}
    \ttinf{R_2} 
    &\le \ttinf{U}\cdot \opnorm\big{\Lambda (U\tr\hU - Q_*)\hL^{-1/2}}\\ 
    &\le \ttinf{U}\cdot \opnorm{\Lambda} \cdot \opnorm\big{(U\tr\hU - Q_*)} \cdot \opnorm\big{\hL^{-1/2}}\\
    &\lesssim \frac{\Rx\kappa}{\sqrt{n}}\cdot \kappa^2n \cdot \frac{ \kappa^4\sigma ^2}{n} \cdot \frac{\kappa}{\sqrt{n}}
    \quad \lesssim\quad \frac{ \sigma ^2\Rx\kappa^8}{{n}},\label{eq:ttinf-R2}
\end{align}
using \cref{lem:gram}\,\ref{lem:gram-3}, \cref{lem:eigen}\,\ref{lem:eigen-2}, and \cref{lem:eigen}\,\ref{lem:eigen-1}, respectively. Again, using \cref{eq:ttinf-ABC} with $A = I$, we have
\begin{align}
    \ttinf{R_3} \le \ttinf{\Dc U} \cdot \opnorm\big{Q_* \hL^{-1/2} - \L^{-1/2}Q_*}.\label{eq:ttinf-R3-1}
\end{align}
From \cref{lem:eigen}\,\ref{lem:eigen-3}, we have
\begin{align}
    \opnorm\big{Q_* \hL^{-1/2} - \L^{-1/2}Q_*} \lesssim \frac{ \sigma\kappa^6 }{n},
\end{align}
and for the first term on the r.h.s. of \cref{eq:ttinf-R3-1}, using the fact that $\Delc = U \L U\tr$,
\begin{align}
    \ttinf{\Dc U} 
    &\le \ttinf{\Delc U} + \ttinf{\qty\big(\Dc - \Delc) U}\\
    &= \ttinf{U\L} + \opnorm{(\Dc - \Delc)U}\\
    &= \ttinf{U} \cdot \opnorm{\L} + \opnorm{\Dc - \Delc} \cdot \opnorm{U}\\
    &\lesssim \qty(\frac{\Rx\kappa}{\sqrt{n}} \cdot \kappa^2n) + \qty\Big( \sigma \sqrt{n} \cdot 1)
    \quad\lesssim\quad { \sigma \Rx\kappa^3}\sqrt{n}.
\end{align}
Combining the two bounds for \cref{eq:ttinf-R3}, we get
\begin{align}
    \ttinf{R_3} \lesssim \frac{ \sigma ^2\Rx\kappa^{9}}{\sqrt{n}}.\label{eq:ttinf-R3}
\end{align}
Since the bounds for $R_1, R_2$ and $R_3$ hold on the event $A$, combining \cref{eq:ttinf-R1}, \cref{eq:ttinf-R2} and \cref{eq:ttinf-R3}, it follows that for all $n > \enn_1$,
\begin{align}
    \pr\qty(\ttinf{R_1} + \ttinf{R_2} + \ttinf{R_3} \gtrsim \frac{ \sigma ^2\kappa^9\Rx}{\sqrt{n}}) = O(n^{-2} + n^{-r}).\label{eq:ttinf-R13-prob}
\end{align}

\noindent\textbf{Bound for $R_4$.} We begin by writing
\begin{align}
    R_4 
    = \qty\big(\Dc - \Delc) U \L^{-1/2}Q_* 
    = \qty\big(\Dc - \Delc) (U \L^{1/2})\L^{-1}Q_*
    = \qty(-\half H\Eps H) (H\X Q\tr) \L^{-1}Q_*,
\end{align}
where the final equality follows from \cref{eq:Dc} and $Q\in \orth{p}$ is given in \cref{eq:cs-exact-recovery}. Centering $\Eps$ on the r.h.s, we get
\begin{align}
    R_4 &= -\half \qty\Big(H \qty(\Eps - \E(\Eps)) H) (H\X) Q\tr\L\inv Q_* - \half \qty\Big(H\E(\Eps) H) (H\X) Q\tr\L\inv Q_*
\end{align}
and, using the triangle inequality along with \cref{prop:cape-ttinf},
\begin{align}
    \ttinf{R_4} &\lesssim \underbrace{\ttinf\Big{\qty\Big(H \qty({\Eps - \E(\Eps)}) H) (H\X) Q\tr\L\inv Q_*}}_{=:\Circled{1}} + \underbrace{\ttinf{\qty\Big(H\E(\Eps) H) (H\X) Q\tr\L\inv Q_*}}_{=:\Circled{2}}.
\end{align}
For $\Circled{2}$, using \cref{eq:ttinf-ABC} from \cref{prop:cape-ttinf} along with assumptions~\ref{assumption:compact}~\textit{\&}~\ref{assumption:expectation} and \cref{lem:eigen},
\begin{align}
    \Circled{2} &\le \norm{H\E(\Eps)H}_{\infty}\cdot \ttinf{H\X}\cdot \opnorm{Q\tr \L\inv Q_*} \lesssim \frac{\kappa^2\Rx}{\sqrt{n}}.
\end{align}
For $\Circled{1}$, using the fact that $\norm{H}_{\infty} = \max_i\sum_{j}\abs{h_{ij}}\le 2$ and $H^2 = H$, from \cref{eq:ttinf-ABC} it follows that
\begin{align}
    \Circled{1} 
    &\le \norm{H}_\infty \cdot \ttinf{(\Eps - \E(\Eps))H\X} \cdot \opnorm{Q\tr \L\inv Q_*}\\
    &\lesssim \ttinf{(\Eps - \E(\Eps))H\X} \cdot \opnorm\big{\L\inv}\\
    &\lesssim \underbrace{\ttinf{(\Eps - \E(\Eps))H\X}}_{=:\Circled{3}} \cdot \frac{\kappa^2}{n}.
\end{align}
For $\Circled{3}$, using the law of total expectation,
\begin{align}
    \pr\qty\Big( \ttinf{(\Eps - \E(\Eps))H\X} > t ) = \E_{\X}\qty\bigg[ \pr\qty\Big( \ttinf{(\Eps - \E(\Eps))H\X} > t \mid \X) ].
\end{align}
For fixed $\X$, let $\Z = H\X$ and $\Y = \Eps - \E(\Eps)$. From assumption~\ref{assumption:compact}, $\opnorm{\Z} \le \kappa\sqrt{n}$ and $\ttinf{\Z}\le \Rx$. Therefore, from \cref{prop:ttinf-concentration} it follows that there exists $n_1 \equiv n_1(r, q, \sigma, \kappa)$ such that conditional on $\X$ and for sufficiently large $n > n_1$,
\begin{align}
    \ttinf{(\Eps - \E(\Eps))H\X} \lesssim {(\kappa + \Rx)\sigma\sqrt{np\log{np} }},
\end{align}
with probability greater than $1 - O(n^{-2} + n^{-r})$. Since the above bound holds uniformly over $\X \in \bbX(\xpar)$, it follows that $\Circled{3} \lesssim {\sigma( \kappa + \Rx)\sqrt{np\log{np} }}$ with probability greater than $1 - O(n^{-2} + n^{-r})$. Plugging this back into the bound for $\Circled{1}$ and combining with $\Circled{2}$, it follows that with probability greater than $1 - O(n^{-2} + n^{-r})$,
\begin{align}
    \ttinf{R_4} \lesssim {\mathbf{\overline{c}}(\xpar)} \sigma\sqrt{\frac{p \log{np}}{n}} + \frac{\kappa^2\Rx}{\sqrt{n}}.
\end{align}
Further, there exists $n_2 \equiv n_2(\xpar, \sigma)$ such that 
$\kappa^2\Rx \lesssim \sigma(\kappa+\Rx)\sqrt{p \log{np}}$ for all $n > \max\qty{n_1, n_2}$, and we have
\begin{align}
    \pr\qty( \ttinf{R_4} \gtrsim \mathbf{\overline{c}}(\xpar){\sigma} \sqrt{\frac{p \log{np}}{n}} ) = O(n^{-2} + n^{-r}).\label{eq:ttinf-R4-prob}
\end{align}

\noindent\textbf{Combining the bounds.} Combining the bounds for $R_1, R_2, R_3$ and $R_4$ from \cref{eq:ttinf-R13-prob} and \cref{eq:ttinf-R4-prob} and using a union bound, it follows that
\begin{align}
    \pr\qty( \ttinf\big{\hX - g_*(\X)} \gtrsim \mathbf{\overline{c}}(\xpar) {\sigma} \sqrt{\frac{p \log{np}}{n}} + \frac{ \sigma ^2 \kappa^9\Rx}{\sqrt{n}} ) = O(n^{-2} + n^{-r});
\end{align}
finally exists $n_3 \equiv n_3(\xpar, \sigma)$ such that for all $n > n_3$,
\begin{align}
    \frac{\sigma\kappa^9\Rx}{\sqrt{n}}\lesssim {(\kappa + \Rx)} \sqrt{\frac{p \log{np}}{n}}.
\end{align}
Therefore, for sufficiently large $n > \enn_3(r, q, \xpar) := \max\qty{\enn_1, n_1, n_2, n_3}$ it follows that with probability greater than $1 - O(n^{-2} + n^{-r})$,
\begin{align}
    \ttinf\big{\hX - g_*(\X)} \lesssim {\mathbf{\overline{c}}(\xpar)}{\sigma} \sqrt{\frac{p \log{np}}{n}},\label{eq:precise-ttinf-bound}
\end{align}
which completes the proof.
\QED

\subsection{Proof of \cref{thm:minimax-frobenius}}
\label{proof:thm:minimax-frobenius}

Let $\Psi^*(\delta, \xi) := \xi$ and $\pi^* := \mathcal{N}(0, \sigma^2)$ be fixed, and let $\Theta^*(\xpar, \sigma) := \qty{ (\X, \Psi^*, \pi^*): \X \in \bbX }$ such that for each $\theta^* = (\X, \Psi^*, \pi^*) \in \Theta^*(\xpar, \sigma)$ the distribution $\prs_\X := \pr_{(\X, \Psi^*, \pi^*)}$ denotes the joint distribution of $D = \Delta(\X) + \Eps$ where $\eps_{ij} \simiid \mathcal{N}(0, \sigma^2)$ for all $i < j$, corresponding to the additive noise model in \cref{tab:noise-models}. Since $\Theta^*(\xpar, \sigma) \subset \Theta(\xpar, \sigma)$, it follows that
\begin{align}
    \inf_{\hX}\sup_{\Theta(\xpar, \sigma)} \pr_{(\X, \Psi, \pi)}\qty{ \loss\rmse\qty\big(\hX(D), \X) > s} \ge \inf_{\hX}\sup_{\X \in \bbX} \prs_{\X}\qty{ \loss\rmse\qty\big(\hX(D), \X) > s}.\label{eq:X-minimax}
\end{align}
Therefore, it suffices to establish a lower bound for the right hand side of \cref{eq:X-minimax}. As is standard in minimax analysis, we will establish the lower bound by reducing the problem to a hypothesis testing problem over a suitably large finite subset $\bbY \subset \bbX$ to get
\begin{align}
    \inf_{\hX}\sup_{\X \in \bbX} \prs_{\X}\qty{ \loss\rmse\qty\big(\hX(D), \X) > s} \ge \inf_{\hX}\sup_{\Y \in \bbY} \prs_{\Y}\qty{ \loss\rmse\qty\big(\hX(D), \Y) > s}.\label{eq:Y-minimax}
\end{align}
To establish the lower bound over $\bbY$, we will need the following general lower bound for testing multiple hypotheses using Fano's method.

\begin{lemma}[\citealp{tsybakov2008nonparametric},~Theorem~2.5]\label{lemma:fano}
    Let $\Theta$ be a parameter space equipped with a pseudometric $\rho(\cdot, \cdot)$, and suppose $\Theta$ contains $\{\theta_0, \theta_1, \dots, \theta_M\}$ elements for $M \ge 2$ such that (i) $d(\theta_j, \theta_k) \ge 2s$ for all $0 \le j < k \le M$, (ii) $\frac{1}{M}\sum_{i=1}^M \kl(\pr_{\theta_i} \| \pr_{\theta_0}) \le \alpha \log{M}$ with $0 < \alpha < 1/8$ where $\kl(\pr_{\theta_j} \| \pr_{\theta_0})$ is the Kullback-Leibler divergence. Then,
    \begin{align}
        \inf_{\hat\theta}\sup_{\theta\in\Theta} \pr_{\theta}\qty{ \rho(\hat\theta, \theta) > s } \ge \frac{\sqrt{M}}{1+\sqrt{M}}\qty( 1 - 2\alpha - \sqrt{\frac{2\alpha}{\log{M}}} ).
    \end{align}
\end{lemma}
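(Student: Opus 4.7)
The plan is to follow the standard reduction from estimation to $(M+1)$-ary hypothesis testing and then invoke a sharp Fano-type inequality. First, given any estimator $\hat\theta$, I would define the minimum-distance test $\psi^* := \arg\min_{0 \le j \le M}\rho(\hat\theta, \theta_j)$. By hypothesis (i) the points $\theta_0,\ldots,\theta_M$ are $2s$-separated, so the triangle inequality implies that whenever $\rho(\hat\theta,\theta_j)\le s$ the unique minimizer is $j$. Consequently $\{\psi^* \neq j\} \subseteq \{\rho(\hat\theta,\theta_j) > s\}$ for every $j$, and taking the supremum over $\Theta$ then averaging over $j$ yields
\[
\sup_{\theta\in\Theta}\pr_\theta\{\rho(\hat\theta,\theta) > s\} \;\ge\; \max_{0\le j\le M}\pr_{\theta_j}\{\psi^* \neq j\} \;\ge\; \frac{1}{M+1}\sum_{j=0}^M \pr_{\theta_j}\{\psi^* \neq j\} \;=:\; p_{e,M+1}.
\]
This converts the minimax lower bound into a lower bound on the average error of an $(M+1)$-ary Bayes test against a uniform prior.

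Next, I would upper bound the mutual information $I(J;D)$ between the uniform label $J \sim \mathrm{Uniform}\{0,\ldots,M\}$ and the observation $D$ in terms of the pairwise KL divergences. Using the variational identity $I(J;D) = \inf_Q \tfrac{1}{M+1}\sum_{j=0}^M \kl(\pr_{\theta_j}\,\|\,Q)$, setting $Q = \pr_{\theta_0}$ and using hypothesis (ii) gives $I(J;D) \le \tfrac{M}{M+1}\,\alpha \log M \le \alpha \log M$, since the $j=0$ term contributes zero. This is the only place where the KL condition of the lemma enters.

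The final step is to combine the two ingredients via a quantitative form of Fano's inequality that yields the precise constants in the statement. The classical bound $p_{e,M+1} \ge 1 - (I(J;D)+\log 2)/\log M$ would give only a qualitative $1-\alpha-o(1)$ guarantee, which falls short of the sharper form $\tfrac{\sqrt{M}}{1+\sqrt{M}}\bigl(1 - 2\alpha - \sqrt{2\alpha/\log M}\bigr)$. To recover the correct constants, I would invoke Birg\'e's refinement, whose proof proceeds from the data-processing inequality $\kl\bigl(\mathrm{Bernoulli}(p_{e,M+1})\,\|\,\mathrm{Bernoulli}(1/(M+1))\bigr) \le I(J;D)$, followed by an explicit expansion of the binary KL divergence and inversion in $p_{e,M+1}$; the prefactor $\sqrt{M}/(1+\sqrt{M})$ emerges from the base rate $1/(M+1)$ and the $\sqrt{2\alpha/\log M}$ correction from a Pinsker-type second-order term.

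The main obstacle is this last algebraic step: tracking the exact constants in the sharp Fano inequality is delicate, while the reduction and the mutual-information bound are routine. Since the statement is cited verbatim as Theorem~2.5 of \cite{tsybakov2008nonparametric}, the cleanest strategy is to stop at the reduction $\sup_\theta \pr_\theta\{\rho(\hat\theta,\theta)>s\} \ge p_{e,M+1}$ together with the bound $I(J;D)\le\alpha\log M$, and then invoke that theorem directly for the quantitative conversion to a lower bound on $p_{e,M+1}$, rather than rederive the sharp constants from scratch.
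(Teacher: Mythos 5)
Your proposal is correct and matches the paper's treatment: the paper states this lemma as a direct citation of Theorem~2.5 in \citet{tsybakov2008nonparametric} and offers no proof of its own, so invoking that theorem (after the standard reduction to testing and the mutual-information bound you sketch) is exactly what is intended. The only minor inaccuracy is attributional---in Tsybakov's proof the prefactor $\sqrt{M}/(1+\sqrt{M})$ arises from optimizing a free parameter $\tau=1/\sqrt{M}$ in a likelihood-ratio bound rather than from Birg\'e's binary data-processing step---but this does not affect the validity of your argument since you defer to the cited theorem for the constants.
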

For completeness, we establish that $\loss\rmse$ is a pseudometric on $\R^{n \times p}$ in \cref{lem:rho-rigid}. In view of \cref{lemma:fano}, the subset of $\bbY$ needs to be chosen to be sufficiently large such that the constituent configurations are well-separated in the $\loss\rmse$ pseudometric but whose distributions $\prs_{\Y}$ are close in $\kl(\cdot \| \cdot)$. 

To this end, set $\gamma := 2\kappa/(\kappa^2+1)$ and let $\X \in \bbX$ be chosen as follows. Let $U \in \R^{n \times p}$ be a matrix\footnote{In particular, $U$ is an orthogonal $p$-frame associated with the subspace of the Grassmann manifold $\text{Gr}(n, p)$ orthogonal $\text{span}{\qty{\onev}}$ \citep{edelman1998geometry} with  incoherence parameter $\Rx/\gamma\sqrt{p}$.} with orthonormal columns (i.e., $U\tr U = I_p$) such that $\ttinf{U}\le \Rx/2\sqrt{n}$ and its columns are orthogonal to $\onev$ (i.e., $\onev\tr U = \zerov$), and set $\X := \gamma\sqrt{n}U$. By construction, $\ttinf{\X} \le \Rx/2$,
\begin{align}
    \Xc := H\X = \qty(I - \frac{\onev\onev\tr}{n})\X = \X \qq{and} \frac{\X\tr\X}{n} = \gamma^2 I.\label{eq:X-properties-1}
\end{align}
Note that $\gamma$ is the harmonic mean of $\frac{1}{\kappa}$ and $\kappa$; therefore $\frac{1}{\kappa} \le s_p(H\X/\sqrt{n})= \gamma = s_1(H\X/\sqrt{n}) \le \kappa$ and it follows that $\X \in \bbX$. We construct $\bbY$ using $\X$ as follows. Set $m := \floor{n/2}$ and for $\tau \in \qty{0, 1}^m$ let $\omega(\theta) \in \qty{-1, 0, 1}^n$ be given by
\begin{align}
    \omega(\tau) = \begin{cases}
        (\tau, -\tau)\tr & \text{if $n$ is even},\\
        (\tau, 0, -\tau)\tr & \text{if $n$ is odd}.
    \end{cases}
    \label{eq:omega}
\end{align}
For $\eta \in (0, 1)$ and $v \in \Rp$  with $\norm{v}=1$, let $\bbY$ be a collection of $2^m$ configuration matrices~given~by
\begin{align}
    \bbY \equiv \bbY(\X, \eta, v) = \qty{ \Y(\tau) \in \Rnp : \Y(\tau) = \X + \eta \omega(\tau) v\tr,\; \tau \in \qty{0, 1}^m }.\label{eq:bbY}
\end{align}
Intuitively, each $\Y \in \bbY$ is a centered configuration such that $\loss\rmse(\Y, \X)$ is large but $\frobenius{\Del(\Y) - \Del(\X)}$ is sufficiently small. Moreover, when $\eta = o(1)$ is chosen to be sufficiently small, we have $\bbY \subset \bbX$. The following result establishes some key properties of $\bbY$ which will be crucial in establishing the lower bound. The proof is deferred to \cref{proof:lemma:Y-subset-X}.
\begin{lemma}\label{lemma:Y-subset-X}
    For $\gamma = 2\kappa / (\kappa^2 + 1)$, let $\X = \gamma\sqrt{n} U \in \Rnp$ be as chosen above and let $\bbY = \bbY(\X, \eta, v)$ be as defined in \cref{eq:bbY}. Then, for all $\Y \in \bbY$,
    \begin{enumerate}[label=\textup{(\roman*)}, ref=\cref{lemma:Y-subset-X}~(\roman*)]
        \item\label{eq:sY-bound} $\gamma - \eta \le s_p\qty(\frac{H\Y}{\sqrt{n}}) \le s_1\qty(\frac{H\Y}{\sqrt{n}}) \le \gamma + \eta$.
        \item\label{eq:DelY-bound} $\frobenius{\Del(\Y) - \Del(\X)}^2 \lesssim n^2( \gamma^2\eta^2 + \eta^4 )$.
        \item\label{eq:Y-separation} For all $\Y(\tau), \Y(\tau') \in \bbY$ and for all $\eta \le \gamma/8$, we have $$
        \loss\rmse\qty(\Y(\tau), \Y(\tau')) \gtrsim \frac{\gamma\eta}{\gamma + \eta} \cdot \sqrt{\frac{d_H(\tau, \tau')}{n}},$$ 
        where $d_H(\tau, \tau') = \half\norm{\tau - \tau'}_1$ is the Hamming distance between $\tau$ and $\tau'$.
    \end{enumerate}
\end{lemma}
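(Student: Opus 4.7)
My plan is to establish the three sub-claims in turn, with the bulk of the effort concentrated on (iii). Two structural observations will be used throughout: first, $\onev^\top\omega(\tau) = 0$ by construction in \cref{eq:omega}, so $H\omega(\tau) = \omega(\tau)$ and hence $H\Y(\tau) = H\X + \eta\omega(\tau)v^\top = \Y(\tau)$; second, $\X = \gamma\sqrt{n}\,U$ with $U^\top U = I_p$, so $\X^\top\X = n\gamma^2 I_p$ by \cref{eq:X-properties-1}.

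For (i), I apply Weyl's inequality to singular values: the perturbation $H\Y(\tau)/\sqrt n - H\X/\sqrt n = (\eta/\sqrt n)\,\omega(\tau) v^\top$ has rank one and operator norm $\eta\norm{\omega(\tau)}/\sqrt n \le \eta$ (using $\norm{\omega(\tau)}^2 \le 2m \le n$ and $\norm{v} = 1$), so $\abs{s_k(H\Y(\tau)/\sqrt n) - s_k(H\X/\sqrt n)} \le \eta$ for every $k$; since every nonzero singular value of $H\X/\sqrt n$ equals $\gamma$, the bounds follow. For (ii), expanding $\norm{Y_i - Y_j}^2$ yields
\[
\delta_{ij}(\Y) - \delta_{ij}(\X) = 2\eta(\omega_i - \omega_j)\langle X_i - X_j, v\rangle + \eta^2(\omega_i - \omega_j)^2.
\]
Squaring and summing, using $(\omega_i - \omega_j)^2 \le 4$ and $\sum_{i,j}(\omega_i - \omega_j)^4 \le 16n^2$, together with the identity $\sum_{i,j}\langle X_i - X_j, v\rangle^2 = 2n\,v^\top\X^\top\X v = 2n^2\gamma^2$ (from the column-centering of $\X$ and \cref{eq:X-properties-1}), gives $\norm{\Del(\Y) - \Del(\X)}_F^2 \lesssim n^2(\gamma^2\eta^2 + \eta^4)$.

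Claim (iii) is the delicate one. Since $\Y(\tau)$ and $\Y(\tau')$ are both column-centered, the optimal translation vanishes and it suffices to lower bound $\min_{R \in \orth{p}}\norm{\Y(\tau) - \Y(\tau')R}_F^2$. I plan to use the Procrustes identity
\[
\min_{R \in \orth{p}}\norm{A - BR}_F^2 = \norm{A}_F^2 + \norm{B}_F^2 - 2\norm{B^\top A}_*
\]
with $A = \Y(\tau)$, $B = \Y(\tau')$, and exploit the block-antisymmetric structure of $\omega(\tau) = (\tau,-\tau)^\top$ (or $(\tau,0,-\tau)^\top$ when $n$ is odd). Specifically, I choose $U$ so that its top-half rows coincide with its bottom-half rows; this choice is compatible with $U^\top U = I_p$, $\onev^\top U = 0$, and the incoherence bound $\norm{U}_{2\to\infty}\le\Rx/(2\sqrt n)$, and forces $U^\top\omega(\tau) = 0$ for every admissible $\tau$. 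Under this choice, all cross terms vanish and
\[
\Y(\tau)^\top \Y(\tau') = n\gamma^2 I_p + \eta^2\,\omega(\tau)^\top\omega(\tau')\,vv^\top,
\]
a symmetric positive semidefinite matrix (since $\omega(\tau)^\top\omega(\tau') = 2\tau^\top\tau' \ge 0$) whose nuclear norm equals its trace $np\gamma^2 + 2\eta^2\tau^\top\tau'$. Together with $\norm{\Y(\tau)}_F^2 = np\gamma^2 + 2\eta^2\norm{\tau}^2$, substitution yields $\min_R \norm{\Y(\tau) - \Y(\tau')R}_F^2 = 2\eta^2 d_H(\tau, \tau')$, hence $\loss\rmse(\Y(\tau),\Y(\tau')) = \sqrt{2}\,\eta\sqrt{d_H(\tau,\tau')/n} \ge (\gamma\eta/(\gamma+\eta))\sqrt{d_H(\tau,\tau')/n}$ since $\gamma/(\gamma+\eta) \le 1$.

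The main obstacle I anticipate is constructing the block-symmetric orthonormal frame $U$ that also satisfies the incoherence bound. A direct construction works: take an orthonormal basis of the $(m-1)$-dimensional subspace of $\R^m$ orthogonal to $\onev_m$, rescale each basis vector by $1/\sqrt{2}$, and use these as the top-half rows of $U$; this is feasible provided $n \ge 2(p+1)$ and $\Rx \gtrsim \sqrt{p}$, both consistent with the non-emptiness of $\bbX(\xpar)$. An alternative proof that avoids this special choice would instead work with a generic $U$ and carry out a perturbation analysis balancing the Frobenius cost $\gamma\sqrt n\norm{I-R}_F$ against the $\eta$-scale gain of rotating $R$ away from the identity; in that route the hypothesis $\eta \le \gamma/8$ ensures the resulting quadratic lower bound on $\min_R\norm{\cdot}_F^2$ stays positive, and the factor $\gamma\eta/(\gamma+\eta)$ in the statement arises as the minimum of that quadratic.
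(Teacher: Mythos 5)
Parts (i) and (ii) are correct and essentially equivalent to the paper's argument: (i) is the same Weyl-inequality step, and in (ii) you work entrywise with $\delta_{ij}(\Y)-\delta_{ij}(\X)$ while the paper works with the matrix identity in \cref{eq:Del-matrix-form} via $\Y\Y^\top-\X\X^\top$ and its diagonal; both routes give the same bound.

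For (iii), your Approach~A is a genuinely different route from the paper's. The paper keeps the generic $U$ and proves the lower bound via a two-step chain
$\loss\rmse(\Y,\Y') \gtrsim \smallnorm{\Y\Y^\top-\Y'\Y'{}^\top}_F / \big(n(\gamma+\eta)\big) \gtrsim \gamma\eta\sqrt{d_H/n}/(\gamma+\eta)$;
the second inequality expands $\Y\Y^\top-\Y'\Y'{}^\top = \eta A + \eta^2 B$ and uses the hypothesis $\eta \le \gamma/8$ to absorb the cross term $\eta^3\mathrm{tr}(A^\top B)$. You instead impose the additional structure $U_{\mathrm{top}} = U_{\mathrm{bot}}$ so that $U^\top\omega(\tau)=0$ for every admissible $\tau$, and then the Procrustes identity $\min_R\norm{A-BR}_F^2 = \norm{A}_F^2 + \norm{B}_F^2 - 2\norm{B^\top A}_*$ gives an \emph{exact} computation: with all cross terms vanishing and $\Y(\tau')^\top\Y(\tau) = n\gamma^2 I_p + 2\eta^2(\tau^\top\tau')\,vv^\top \succeq 0$, the nuclear norm is the trace and $\min_R\norm{\Y(\tau)-\Y(\tau')R}_F^2 = 2\eta^2\norm{\tau-\tau'}^2 = 4\eta^2 d_H(\tau,\tau')$, hence $\loss\rmse = 2\eta\sqrt{d_H/n}$ (you wrote $\sqrt{2}\eta$, a harmless constant slip). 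This is cleaner, avoids the $\eta\le\gamma/8$ hypothesis entirely, and gives a tighter lower bound. The trade-off is scope: you are proving (iii) only for the specific $U$ with the duplicated-block structure, not for the arbitrary incoherent orthonormal frame the lemma statement fixes ``as chosen above.'' Since the lemma's only role is to exhibit a packing set for the Fano argument in \cref{thm:minimax-frobenius}, specializing the construction is perfectly acceptable there — but one should then propagate the modified construction back into the proof of \cref{thm:minimax-frobenius} and check that \ref{eq:DelY-bound} and $\bbY\subset\bbX$ still hold (they do, by your (i)--(ii) which you prove generically, and by the same feasibility check $n \gtrsim p$, $\Rx \gtrsim \sqrt{p}$ that the original construction already needs). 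Approach~B is only sketched and is not developed far enough to assess; it does not obviously coincide with the paper's Gram-matrix decomposition.
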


With the above lemma in hand, we are now ready to establish the lower bound using \cref{lemma:fano}. 
By noting that $m = \lfloor n/2 \rfloor$ and using the Varshamov-Gilbert bound \citep[Lemma~2.9]{tsybakov2008nonparametric}, there exists $\qty{\tau_0, \tau_2, \dots, \tau_M} \subset \qty{0, 1}^m$ of size $\log{M} \gtrsim m \asymp n$ such that $H(\tau, \tau') \gtrsim m \asymp n$ and $\tau_0 = \zerov_m$. From \ref{eq:Y-separation}, if $\eta \le \gamma/8$ it follows that
\begin{align}
    \loss\rmse\qty\big(\Y(\tau_i), \Y(\tau_j)) \gtrsim \frac{\gamma\eta}{\gamma + \eta} \qq{for all} i \neq j.\label{eq:Y-separation-2}
\end{align}
Note that $\Y(\tau_0) = \X$ and that for each $i \in [M]$, $\pr_{i} := \pr_{Y(\tau_i)}$ is a matrix Normal distribution on the entries of $n \times n$ symmetric, hollow matrices with mean $\Delta(\Y(\tau_i))$ and variance $\sigma^2$. The Kullback-Leibler divergence is bounded by
\begin{align}
    \kl\qty\Big( \pr_{i} \| \pr_{0} ) = \frac{1}{2\sigma^2}\sum_{k < \ell}\norm{ \del_{k\ell}(\Y(\tau_i)) -  \del_{k\ell}(\X) }^2 
    &= \frac{1}{4\sigma^2} \norm{\Del(\Y(\tau_i)) - \Del(\X)}^2_F\\ 
    &\lesssim \frac{n^2}{\sigma^2}\cdot (\eta^2\gamma^2 + \eta^4)\\
    &\lesssim \log{M} \cdot \frac{n}{\sigma^2}(\eta^2\gamma^2 + \eta^4),\label{eq:kl-bound}
\end{align}
where the second line follows from \ref{eq:DelY-bound} and the third line follows by noting that $\log{M} \asymp n$. From \cref{eq:Y-separation-2} and \cref{eq:kl-bound}, and as per \cref{lemma:fano}, let
\begin{align}
    2s := c_1\frac{\gamma\eta}{\gamma + \eta} \qq{and} \alpha := c_2\frac{n}{\sigma^2}(\eta^2\gamma^2 + \eta^4),
\end{align}
where $c_1, c_2 > 0$ are absolute constants. For $C > 1$ universal sufficiently large\footnote{It suffices to take $C=\max\qty{8, 32c_2}$.}, let
\begin{align}
    \eta^2 = \frac{\sigma^2}{C \gamma^2n},
\end{align}
and let $\enn_3(\sigma, \kappa) = C\sigma^2\kappa^4$. For all $n \ge \enn_3(\sigma, \kappa)$, by noting that $\frac{1}{\kappa} \le \gamma$ we have
\begin{align}
    \eta^2 \le \frac{1}{C^2\gamma^2\kappa^4} \le \frac{\gamma^2}{C^2} \le \frac{\gamma^2}{64},
\end{align}
as required by \ref{eq:Y-separation}. It is also easy to verify that $\bbY \subset \bbX$ from \ref{eq:sY-bound}. Moreover,
\begin{align}
    &\alpha = c_2\frac{n}{\sigma^2}(\eta^2\gamma^2 + \eta^4) = c_2\qty(\frac{1}{C} + \frac{\sigma^2}{C^2n\gamma^4}) \le c_2\qty(\frac{1}{C} + \frac{\sigma^2\kappa^4}{C^2n}) \le \frac{2c_2}{C} \le \frac{1}{16}\\
    \implies & 1 - 2\alpha - \sqrt{\frac{2\alpha}{\log{M}}} > \frac{1}{2}.\label{eq:alpha-bound}
\end{align}
Plugging in this choice of $\eta$ into the expression for $s$, it follows that
\begin{align}
    2s 
    = c_1 \frac{\gamma\eta}{\gamma + \eta} 
    >\frac{c_1}{2} \eta
    = \frac{c_1\sigma}{2\gamma\sqrt{Cn}}
    = \frac{c_1\sigma}{4\sqrt{Cn}} \cdot \qty(\frac{\kappa^2 +1}{\kappa})
    \ge \frac{c_1\sigma\kappa}{4\sqrt{C}\sqrt{n}},\label{eq:s-bound}
\end{align}
where the first inequality follows from the fact that $\eta < \gamma$. By noting that $\log{M} \gtrsim n$, it follows that
$$
\frac{\sqrt{M}}{1+\sqrt{M}} = \frac{1}{1 + O(2^{-n})} = 1 - o(1),
$$ 
and using the bounds from \cref{eq:alpha-bound,eq:s-bound} in \cref{lemma:fano}, it follows that
\begin{align}
    \inf_{\hX}\sup_{\Y \in \bbY} \pr_{\Y}\qty{ \loss\rmse\qty\big(\hX(D), \X) > \frac{c_1}{8\sqrt{C}} \cdot \frac{\sigma\kappa}{\sqrt{n}} } \ge \frac{1}{2}\qty\Big(1 - o(1)),
\end{align}
when $n > \enn_3(\sigma, \kappa)$, and final result follows from \cref{eq:Y-minimax} and \cref{eq:X-minimax}.\qed

\subsection{Proof of \cref{thm:minimax-ttinf}}
\label{proof:thm:minimax-ttinf}

As in the proof of \cref{thm:minimax-frobenius}, let $\Psi^*(\delta, \xi) := \xi$ and $\pi^* := \mathcal{N}(0, \sigma^2)$ be fixed, and let $\prs_\X := \pr_{(\X, \Psi^*, \pi^*)}$ denote the joint distribution of $D = \Delta(\X) + \Eps$ where $\eps_{ij} \simiid \mathcal{N}(0, \sigma^2)$ for all $i < j$. We have
\begin{align}
    \inf_{\hX}\sup_{\Theta(\xpar, \sigma)} \pr_{(\X, \Psi, \pi)}\qty{ \loss\rmse\qty\big(\hX(D), \X) > s} 
    \ge 
    \inf_{\hX}\sup_{\X \in \bbX} \prs_{\X}\qty{ \loss\rmse\qty\big(\hX(D), \X) > s}.\label{eq:XY-minimax}
\end{align}
To establish the lower bound for the right hand side of \cref{eq:XY-minimax}, we will need the following result for testing multiple hypotheses using Le Cam's convex hull method.
\begin{lemma}[Adaptation of \citealp{yu1997assouad}, Lemma~1]\label{lem:le-cam}
    Let $\hat\theta$ be an estimator of $\theta(\pr)$ for $\pr \in \P$, taking values in a pseudometric space $(\Theta, \rho)$, and suppose there exist subsets $\Theta_0, \Theta_1 \subset \Theta$ which are $2s$-separated, i.e., $\rho(\theta_0, \theta_1) \ge 2s$ for all $\theta_i \in \Theta_i$, $i=0, 1$. Suppose also that $\P_0, \P_1$ are subsets of $\P$ for which $\theta(\pr_i) \in \Theta_i$ for $i=0,1$. Then,
    $$
    \inf_{\hat\theta}\sup_{\theta\in\Theta} \pr_{\theta}\qty{ \rho(\hat\theta, \theta) > s } \ge \half\qty( 1 - \inf_{{\bar\pr_i \in \textup{conv}(\P_i)}}\tv(\bar\pr_1, \bar\pr_2) ),
    $$
    where $\textup{conv}(\P_i) := \qty{ \sum_{k=1}^m{\lambda_k\pr_{ik}} : \pr_{ik} \in \P_i,\, \lambda_k > 0 \; \text{ and } \sum_{k=1}^m\lambda_k=1}$ is the convex hull of $\P_i$ and $\tv(\cdot, \cdot)$ is the total variation metric.
\end{lemma}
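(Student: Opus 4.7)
The plan is to adapt the classical Le Cam two-point reduction from estimation to testing and then extend it to arbitrary mixtures (convex combinations) via linearity of expectation. The overall strategy has two ingredients: (i) any estimator $\hat\theta$ with small worst-case risk induces a test between $\Theta_0$ and $\Theta_1$ with small testing error, thanks to the $2s$-separation; and (ii) any such test is constrained from below by the Neyman-Pearson / Le Cam bound involving the total variation distance.

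First, given $\hat\theta$, I would define the induced test $\psi = \psi(\hat\theta) \in \qty{0,1}$ by setting $\psi = 0$ when $\rho(\hat\theta, \Theta_0) \le \rho(\hat\theta, \Theta_1)$ and $\psi = 1$ otherwise (with any measurable tiebreaker), where $\rho(\hat\theta, \Theta_i) := \inf_{\theta_i \in \Theta_i} \rho(\hat\theta, \theta_i)$. Fix any $\pr \in \P_0$ and write $\theta_0 := \theta(\pr) \in \Theta_0$. If $\rho(\hat\theta, \theta_0) \le s$, then for every $\theta_1 \in \Theta_1$ the triangle inequality combined with $\rho(\theta_0, \theta_1) \ge 2s$ yields $\rho(\hat\theta, \theta_1) \ge s \ge \rho(\hat\theta, \theta_0)$, forcing $\psi = 0$. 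Hence
\[
\pr\qty{\psi = 1} \le \pr\qty{\rho(\hat\theta, \theta(\pr)) > s}, \qquad \pr \in \P_0,
\]
and, symmetrically, $\pr\qty{\psi = 0} \le \pr\qty{\rho(\hat\theta, \theta(\pr)) > s}$ for every $\pr \in \P_1$.

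Next, I would invoke the classical Neyman-Pearson / Le Cam inequality: for any probability measures $Q_0, Q_1$ on a common space and any test $\psi$, $Q_0\qty{\psi = 1} + Q_1\qty{\psi = 0} \ge 1 - \tv(Q_0, Q_1)$. Applied to arbitrary mixtures $\bar\pr_0 = \sum_k \mu_k \pr_{0k} \in \textup{conv}(\P_0)$ and $\bar\pr_1 = \sum_k \lambda_k \pr_{1k} \in \textup{conv}(\P_1)$, linearity of expectation gives
\[
\bar\pr_0\qty{\psi = 1} = \sum_k \mu_k \pr_{0k}\qty{\psi = 1} \le \sum_k \mu_k \pr_{0k}\qty{\rho(\hat\theta, \theta(\pr_{0k})) > s} \le \sup_{\pr \in \P} \pr\qty{\rho(\hat\theta, \theta(\pr)) > s},
\]
and symmetrically for $\bar\pr_1\qty{\psi = 0}$. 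Adding the two and combining with the Le Cam bound yields $2 \sup_{\pr \in \P} \pr\qty{\rho(\hat\theta, \theta(\pr)) > s} \ge 1 - \tv(\bar\pr_0, \bar\pr_1)$; taking the infimum over $\bar\pr_i \in \textup{conv}(\P_i)$ on the right and over estimators $\hat\theta$ on the left delivers the stated inequality.

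The main subtlety is bookkeeping rather than analytical depth. One has to verify that the test $\psi$ is a measurable function of $\hat\theta$, which follows because $x \mapsto \rho(x, \Theta_i)$ is $1$-Lipschitz on $(\Theta, \rho)$, and one has to consistently identify the parameter $\theta$ in the left-hand supremum with the distribution $\pr \in \P$ via the functional $\theta(\pr)$ so that the chain of inequalities bounding the mixture probabilities by the worst-case risk respects the quantifier over $\P$. Modulo these standard considerations, the argument is just the two-point Le Cam reduction averaged against the mixing weights.
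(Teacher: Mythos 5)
Your proof is correct. The paper does not actually prove this lemma---it is imported verbatim as Lemma~1 of \citet{yu1997assouad}---and your argument (reduce estimation to testing via the $2s$-separation and the triangle inequality, then apply the Neyman--Pearson/Le Cam affinity bound to arbitrary convex combinations by linearity) is precisely the standard proof given in that reference, so there is nothing to fault beyond noting that the supremum on the left-hand side should be read as ranging over $\pr \in \P_0 \cup \P_1$ (equivalently, the $\tv(\bar\pr_1,\bar\pr_2)$ in the display should be $\tv(\bar\pr_0,\bar\pr_1)$), a notational slip already present in the paper's statement that your bookkeeping handles correctly.
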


Note that by \cref{lem:rho-rigid}, $\loss\ttinft$ defines a pseudometric on $\bbX$. In view of \cref{lem:le-cam}, we now construct subsets $\bbX_0, \bbX_1 \subset \bbX$. To this end, and without loss of generality, suppose $n$ is even and set $m = n/2$. Choose $\U \in \R^{m \times p}$ similar to the construction in \cref{eq:X-properties-1} as follows. Let $V \in \R^{m \times p}$ with orthonormal columns satisfying $\ttinf{V}\le \Rx/2\sqrt{n}$, and let $U = [\half V\tr -\half V\tr]\tr \in \Rnp$ such that $U\tr U = V\tr V = I_p$. For $\gamma = 2\kappa/(\kappa^2+1)$, let $\X = \gamma\sqrt{n}U$. By construction, $X_k = -X_{k+m}$ for each $k \in [m]$, from which it  follows that $H\X = \X$ since $\onev\tr\X = \zerov$, and $\X \subset \bbX$ since
\begin{align}
    \ttinf{H\X} = \ttinf{\X} \le \frac{\gamma\Rx}{2} \le \frac{\Rx}{2} \qq{and} \frac{(H\X)\tr (H\X)}{n} = \frac{\X\tr\X}{n} = \gamma^2 I_p.\label{eq:X-properties-2}
\end{align}
Fix $\bbX_0 = \qty{\X}$ and let $\bbX_1$ be the collection of $n$ configurations given by
\begin{align}
    \bbX_1 \equiv \bbX_1(\X, \eta) := \qty\Big{ \X^k \in \Rnp: \X^k = \X + \eta e_k v(k)\tr,\; \text{where}\; v(k) = {X_k}/{\norm{X_k}} \qq{and} k\in [n]}.\label{eq:bbX-1}
\end{align}
Intuitively, each $\X^k \in \bbX_1$ is chosen such that $\loss\ttinft(\X^k, \X)$ is maximized while keeping $\smallnorm{\Del(\X^k) - \Del(\X)}_F$ sufficiently small. The following result precisely quantifies this. The proof is deferred to \cref{proof:lemma:X1-properties}.
\begin{lemma}\label{lemma:X1-properties}
    For $\gamma = 2\kappa/(\kappa^2+1)$, let $\X = \gamma\sqrt{n}U$ be as chosen above, and let $\bbX_1 = \bbX_1(\X, \eta)$ be as defined in~\cref{eq:bbX-1}. Then, for all $\X^k \in \bbX_1$,
    \begin{enumerate}[label=\textup{(\roman*)}, ref=\cref{lemma:X1-properties}~(\roman*)]
        \item\label{eq:X1-singular} $\smallnorm{H\X^k}\ttinft \le \eta + \gamma\Rx/2$ and $\gamma - \frac{1}{\sqrt{n}}\eta \le s_p\qty(\frac{H\X^k}{\sqrt{n}}) \le s_1\qty(\frac{H\X^k}{\sqrt{n}}) \le \gamma +  \frac{1}{\sqrt{n}}\eta$,
        \item\label{eq:X1-separation} $\loss\ttinft(\X^k, \X) \ge \eta/2$,
        \item\label{eq:X1-frobenius} $\smallnorm{\Del(\X^k) - \Del(\X)}^2_F \lesssim n\qty(\eta^4 + \eta^2(\gamma + \Rx)^2)$, and
        \item\label{eq:X1-frobenius-inner} For all $k \neq \ell$, $\big\langle{\Del(\X^k) - \Del(\X),\; \Del(\X^\ell) - \Del(\X)}\big\rangle_F \lesssim \eta^4 + \eta^2\Rx^2$.
    \end{enumerate}
\end{lemma}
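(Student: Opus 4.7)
The plan is to verify the four claims by exploiting two structural features of the construction: the rank-one form $\X^k - \X = \eta e_k v(k)\tr$ and the antipodal symmetry $X_{k+m} = -X_k$ induced by the block structure of $U$. For (i), writing $H\X^k = \X + \eta(e_k - \onev/n)v(k)\tr$ (since $H\X = \X$), the perturbation is rank-one with spectral norm $\eta\norm{e_k - \onev/n}\cdot\norm{v(k)} = \eta\sqrt{1 - 1/n} \le \eta$. Weyl's inequality applied to the singular values of $H\X^k/\sqrt{n}$, combined with $s_i(H\X/\sqrt{n}) = \gamma$ from \cref{eq:X-properties-2}, yields the claimed sandwich. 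The $\ttinf$ bound is a row-wise calculation: the $k$-th row of $H\X^k$ equals $X_k + \eta(1 - 1/n)v(k)$ and the $j$-th row ($j \neq k$) equals $X_j - (\eta/n)v(k)$, each with Euclidean norm at most $\ttinf{\X} + \eta \le \gamma\Rx/2 + \eta$.

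For (iii), only entries $(i,j)$ with $i = k$ or $j = k$ are affected, and direct expansion gives $[\Del(\X^k) - \Del(\X)]_{kj} = 2\eta\norm{X_k} - 2\eta\langle v(k), X_j\rangle + \eta^2$ for $j \neq k$. Hence, using symmetry,
\begin{align*}
\frobenius{\Del(\X^k) - \Del(\X)}^2 = 2\sum_{j \neq k}\bigl(2\eta\norm{X_k} - 2\eta\langle v(k), X_j\rangle + \eta^2\bigr)^2.
\end{align*}
Applying $(a+b+c)^2 \le 3(a^2+b^2+c^2)$, invoking the identity $\sum_j \langle v(k), X_j\rangle^2 = v(k)\tr\X\tr\X v(k) = n\gamma^2$ from \cref{eq:X-properties-2}, and bounding $\norm{X_k} \lesssim \gamma\Rx \le \Rx$ delivers the stated estimate. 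For (iv), the key observation is that $M^k := \Del(\X^k) - \Del(\X)$ and $M^\ell$ have nonzero support confined to row/column $k$ and row/column $\ell$, respectively, so for $k \neq \ell$ the Frobenius inner product collapses to $\langle M^k, M^\ell\rangle_F = 2 M^k_{k\ell} M^\ell_{k\ell}$; each factor is bounded in absolute value by $2\eta(\norm{X_k} + \norm{X_\ell}) + \eta^2 \lesssim \eta\Rx + \eta^2$, producing (iv).

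The main obstacle is (ii), because the rigid transformation $g(x) = Qx + w$ has $O(p^2)$ degrees of freedom and a crude argument risks sacrificing the factor of $1/2$. The plan is to exploit the antipodal pair $X_{k+m} = -X_k$: writing $y_i := X^k_i - g(X_i)$, the translation $w$ cancels in the difference
\begin{align*}
y_k - y_{k+m} = (X^k_k - X^k_{k+m}) - Q(X_k - X_{k+m}) = \bigl(2 + \eta/\norm{X_k}\bigr)X_k - 2Q X_k,
\end{align*}
using $v(k) = X_k/\norm{X_k}$ and $X^k_{k+m} = X_{k+m} = -X_k$. Since $\norm{2QX_k} = 2\norm{X_k}$ while $\bigl(2 + \eta/\norm{X_k}\bigr)\norm{X_k} = 2\norm{X_k} + \eta$, the reverse triangle inequality yields $\norm{y_k - y_{k+m}} \ge \eta$, whence $\max(\norm{y_k}, \norm{y_{k+m}}) \ge \eta/2$ uniformly in $g$, proving (ii).
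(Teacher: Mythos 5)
Your proof is correct and follows essentially the same route as the paper: Weyl's inequality for the rank-one perturbation in (i), direct entrywise expansion of $\Del(\X^k)-\Del(\X)$ combined with $\X\tr\X = \gamma^2 n I_p$ and the row bound for (iii)--(iv), and the antipodal pair $X_{k+m} = -X_k$ to eliminate the translation in (ii). Your treatment of (ii) --- differencing the residuals at indices $k$ and $k+m$ so that $w$ cancels and then applying the reverse triangle inequality --- is a slightly more direct variant of the paper's quadratic-mean/parallelogram-law argument and yields the same $\eta/2$ bound.
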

We are now ready to establish the lower bound for \cref{eq:XY-minimax} using \cref{lem:le-cam}. Let $\prs_0$ be the joint distribution of $D = \Del(\X) + \Eps$ for $\X \in \bbX_0$, and define 
\begin{align}
    \bprs := \frac{1}{n}\sum_{k \in [n]}\prs_k \in \text{conv}\qty\big(\qty{\prs_1, \prs_2, \cdots, \prs_n}),\label{eq:bar-prs}
\end{align}
where $\prs_k$ denotes the joint distribution of $D = \Del(\X^k) + \Eps$ for $\X^k = \X + \eta e_kv\tr \in \bbX_1$ for each $k \in [n]$. The following lemma bounds the total variation distance between $\prs_0$ and $\bprs$. The proof is deferred to \cref{proof:lemma:tv-bound}.
\begin{lemma}\label{lemma:tv-bound}
    Let $\bbX_0$, $\prs_0$ and $\bbX_1 \equiv \bbX(\X, \eta)$ be as given above and let $\bprs$ be as defined in \cref{eq:bar-prs}. Then,
    \begin{align}
        \tv(\prs_0, \bprs)^2
        &\le \exp(\frac{1}{\sigma^2} \cdot {\max_{k}\smallnorm{\Del(\X^k)\!-\!\Del(\X)}^2_F} - \log{n})\\
        &\qquad\qquad + \exp(\frac{1}{\sigma^2} \cdot {\max_{k \neq \ell}{\big\langle{\Del(\X^k)\!-\!\Del(\X),\; \Del(\X^\ell)\!-\!\Del(\X)}\big\rangle_F}}) - 1.\label{eq:tv-bound}
    \end{align}
\end{lemma}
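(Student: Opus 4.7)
My plan is to bound $\tv(\prs_0,\bprs)^2$ by the $\chi^2$-divergence $\chi^2(\bprs\|\prs_0)$ and then exploit the Gaussian structure of the $\prs_k$ to evaluate the relevant integrals in closed form. The bookkeeping, not the mathematics, is the only real obstacle: I need to keep track of the factor of $2$ that arises from the symmetry of $D$ so that the exponent ends up as $1/\sigma^2$ rather than $1/(2\sigma^2)$.

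First I would appeal to the standard second-moment bound $\tv(\bprs,\prs_0)^2 \le \chi^2(\bprs\|\prs_0)$ (Cauchy--Schwarz). Expanding the mixture $\bprs=\tfrac{1}{n}\sum_k \prs_k$ inside the square gives
\begin{align*}
1 + \chi^2(\bprs \,\|\, \prs_0)
= \int \Bigl(\tfrac{d\bprs}{d\prs_0}\Bigr)^2 d\prs_0
= \frac{1}{n^2}\sum_{k,\ell=1}^n \int \frac{d\prs_k\, d\prs_\ell}{d\prs_0}.
\end{align*}
Since each $\prs_k$ is the law of $D=\Del(\X^k)+\Eps$ with $\eps_{ij}\simiid \mathcal{N}(0,\sigma^2)$ on the upper triangle $i<j$, each density is Gaussian on $\binom{n}{2}$ coordinates with mean vector indexed by the upper triangle of $\Del(\X^k)$. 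A routine Gaussian MGF calculation then yields the identity
\begin{align*}
\int \frac{d\prs_k\, d\prs_\ell}{d\prs_0}
= \exp\!\Bigl(\tfrac{1}{\sigma^2}\,\bigl\langle \Del(\X^k)-\Del(\X),\; \Del(\X^\ell)-\Del(\X)\bigr\rangle_F^{\text{upper}}\Bigr),
\end{align*}
and the factor $\tfrac{1}{\sigma^2}$ (as opposed to $\tfrac{1}{2\sigma^2}$) emerges precisely because the full Frobenius inner product $\langle\cdot,\cdot\rangle_F$ double-counts each upper-triangular entry of the symmetric matrices $\Del(\X^k)-\Del(\X)$.

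Finally I would split the double sum into its diagonal and off-diagonal contributions. The diagonal ($k=\ell$) piece gives
\begin{align*}
\frac{1}{n^2}\sum_{k=1}^n \exp\!\Bigl(\tfrac{1}{\sigma^2}\,\smallnorm{\Del(\X^k)-\Del(\X)}_F^2\Bigr)
\;\le\; \exp\!\Bigl(\tfrac{1}{\sigma^2}\max_{k}\smallnorm{\Del(\X^k)-\Del(\X)}_F^2 - \log n\Bigr),
\end{align*}
while the off-diagonal piece ($k\ne \ell$) is bounded by the second term in \cref{eq:tv-bound} after using $\tfrac{n(n-1)}{n^2}\le 1$ and maximizing the inner product over $k\ne \ell$. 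Summing the two contributions and subtracting $1$ yields the claimed bound on $\chi^2(\bprs\|\prs_0)$, and hence on $\tv(\prs_0,\bprs)^2$. The only genuinely delicate step is the symmetry bookkeeping in the Gaussian MGF computation; everything else is routine algebra.
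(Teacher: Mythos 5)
Your proposal is correct and takes essentially the same route as the paper: bound $\tv(\prs_0,\bprs)^2$ by the $\chi^2$-divergence of the mixture against $\prs_0$, expand $\frac{1}{n^2}\sum_{k,\ell}\int d\prs_k\,d\prs_\ell/d\prs_0$, evaluate each one-dimensional Gaussian integral $\int \phi_k\phi_\ell/\phi_0$ in closed form over the upper-triangular coordinates, and then split the double sum into the diagonal part (which yields the $-\log n$ term) and the off-diagonal part bounded by the maximal inner product. Your accounting of the upper-triangle sum versus the full Frobenius inner product is, if anything, slightly more explicit than the paper's, which passes directly from $\sum_{i<j}$ to $\langle\cdot,\cdot\rangle_F$; in both arguments the stated bound with the factor $1/\sigma^2$ follows because the exponents involved are ultimately bounded above by nonnegative quantities.
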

In view of \cref{lemma:tv-bound}, for $C > 1$ universal sufficiently large\footnote{It suffices to take $n = 4\max\qty{c_1, c_2}$ where $c_1, c_2$ are the absolute constants in \ref{eq:X1-frobenius} and \ref{eq:X1-frobenius-inner}, respectively.}, set
\begin{align}
    \eta^2 := \frac{\sigma^2}{C(\gamma+\Rx)^2}\frac{\log{n}}{n}.
\end{align}
There exists $n_1 \equiv n_1(\sigma, \xpar)$ such that for all $n> n_1$,
$$
\frac{1}{\kappa} \le \gamma - \frac{\sigma}{\sqrt{C}(\gamma + \Rx)}\cdot\sqrt{\frac{\log{n}}{n^2}} \le \gamma + \frac{\sigma}{\sqrt{C}(\gamma + \Rx)}\cdot\sqrt{\frac{\log{n}}{n^2}} \le \kappa;
$$
then from \ref{eq:X1-singular} it follows that $\bbX_1 \subset \bbX$ for all $n > n_1$. Let $n_2(\sigma, \xpar)$ be such that $n(\kappa+\Rx)^4 > \sigma^2\log{n}$ for all $n > n_2$. Plugging in this choice of $\eta$ into \ref{eq:X1-frobenius}, there exists an absolute constant $c_1 > 0$ such that for all $n > \enn_5(\sigma, \xpar) := \max\qty{n_1, n_2}$,
\begin{align}
    \frac{\smallnorm{\Del(\X^k) - \Del(\X)}^2_F}{\sigma^2} &\le \frac{c_1n\qty((\gamma+\Rx)^2\eta^2 + \eta^4)}{\sigma^2}\\ 
    &= c_1\qty(\frac{1}{C} + \frac{\sigma^2\log{n}}{C^2(\gamma+\Rx)^4n}) \log{n} \\
    &\le \frac{2c_1}{C}\log{n} \le \frac{1}{2}\log{n}\label{eq:tv-bound-1}
\end{align}
and from \ref{eq:X1-frobenius-inner} there exists $c_2 > 0$ such that,
\begin{align}
    \frac{\max_{k \neq \ell}\inner{\Del(\X^k) - \Del(\X),\; \Del(\X^\ell) - \Del(\X)}[F]}{\sigma^2} 
    &\le \frac{c_2((\gamma+\Rx)^2\eta^2 + \eta^4)}{\sigma^2}\\ 
    &= {c_2}\qty(\frac{\Rx^2}{C(\gamma+\Rx)^2} + \frac{\sigma^2\log{n}}{C^2(\gamma+\Rx)^4n})\frac{\log{n}}{n}\\[5pt]
    &\le \frac{2c_2}{C} \cdot \frac{\log{n}}{n} = o(1).\label{eq:tv-bound-2}
\end{align}
Substituting \cref{eq:tv-bound-1,eq:tv-bound-2} into \cref{lemma:tv-bound} and by noting that $\exp\qty\big(o(1)) \le 1 + o(1)$, we have
\begin{align}
    \tv(\prs_0, \bprs) \le \sqrt{ \exp( \frac{1}{2}\log{n} - \log{n} ) + \exp\qty\Big(o(1)) - 1 } = \sqrt{ \frac{1}{\sqrt{n}} + o(1) } = o(1).\label{eq:tv-bound-3}
\end{align}
Moreover, by noting that ${\gamma + \Rx} =  \frac{2\kappa}{\kappa^2+1} + \Rx \le 2(\frac{1}{\kappa} + \Rx)$, from \ref{eq:X1-separation} we also have
\begin{align}
    \loss\ttinft(\X^k, \X) \ge \frac{\eta}{2} = \frac{\sigma}{2\sqrt{C}(\gamma+\Rx)}\sqrt{\frac{\log{n}}{n}} \gtrsim \frac{\sigma\kappa}{1 + \kappa\Rx}\sqrt{\frac{\log{n}}{n}}.\label{eq:X1-separation-1}
\end{align}
Therefore, using \cref{eq:tv-bound-3} and \cref{eq:X1-separation-1} in \cref{lem:le-cam}, it follows that
\begin{align}
    \inf_{\hX}\sup_{\X \in \bbX} \prs_{\X}\qty{ \loss\ttinft\qty\big(\hX(D), \X) > \frac{\sigma\kappa}{1 + \kappa\Rx}\sqrt{\frac{\log{n}}{n}} } \ge \half\qty(1 - o(1)),
\end{align}
for all $n > \enn_5(\sigma, \xpar)$, and the final result follows from \cref{eq:XY-minimax}.\qed


\bibliographystyle{chicago}
\bibliography{refs}

\clearpage
\appendix


\begin{table}[h]
    \caption{Table of Notations}
    \label{tab:notation}\smallskip
    \begin{tabularx}{\textwidth}{lX}
        \toprule
        \textbf{Notation} & \textbf{Description} \\
        \midrule
        $\onev, \zerov, e_k$ & The vector of ones, the vector of zeros, $k$th basis vector \\
        $\norm{x}$ & $\ell_2$ norm of $x \in \R^p$ \\
        $I, J, H$ & The identity matrix, the matrix of all ones and the centering matrix $H = I - J/n$ \\
        $A\pinv$ & The Moore-Penrose pseudoinverse of a full rank matrix $A$ \\
        $\opnorm{A}, \ttinf{A}, \norm{A}_F$ & The $\ell_2 \to \ell_2$ operator norm, $\ell_2 \to \ell_\infty$ norm, and Frobenius norm of $A$\\
        $A \circ B$ & The Hadamard product of two matrices $A$ and $B$ \\
        $\X$ & The $n \times p$ configuration matrix with rows $X_1, \cdots, X_n \in \Rp$ \\
        $\Del$ & The pairwise squared Euclidean distance matrix $\del_{ij} = \norm{X_i - X_j}^2$ \\
        $\Xi$ & An $n \times n$ symmetric, hollow random matrix \\
        $\Eps$ & The $n \times n$ symmetric, hollow matrix of errors $\Eps = \Psi(\Del, \Xi)$ \\
        $D$ & The observed distance matrix $D = \Del + \Eps$ \\
        $\Dc, \Delc$ & The double centered dissimilarity matrices $\Dc = -\half HDH$ and $\Delc = -\half H\Del H$ \\
        $\sigma, \kappa, \Rx$ & Bounds on $\max_{ij}\E[|\eps_{ij}|^q \mid \del_{ij}]^{1/q}$, $s_1(H\X/\sqrt{n})$ and $\ttinf{H\X}$, respectively. \\
        \bottomrule
    \end{tabularx}
\end{table}



\section{Auxiliary Results and Deferred Proofs}\label{sec:auxiliary}

In this section, we present some auxiliary results that are used in the proofs of our main theorems along with the deferred proofs for some technical lemmas. \cref{sec:upper-bound-results} contains results that are used to establish upper bounds for \cref{sec:consistency}, and \cref{sec:lower-bound-results} contains the proofs for the lemmas used to establish lower bounds from \cref{sec:lower-bound}.

\subsection{Results for Upper Bounds}
\label{sec:upper-bound-results}

The following result is an adaptation of the symmetrization lemma for tail probabilities (\citealp[Lemma~2.3.7~of][]{van2023weak} and \citealp[Proposition~3.1.24~of][]{gine2016mathematical}) for the $\ell_{2}$-operator norm of a random matrix $\Y \in \R^{n \times n}$. The result is used to establish the tail probability bounds in \cref{lem:D-opnorm} and \cref{thm:X-ttinf}.

\begin{lemma}\label{lem:opnorm-symmetrization}
    Let $\Y = (Y_{ij}) \in \R^{n \times m}$ be a random matrix with mean zero entries. Then, for every fixed matrix $P \in \R^{n \times m}$ and for all $t> 0$,
    \begin{align}
        \beta(t) \pr\qty\Big( \opnorm{\Y} > t ) \le 2 \pr\qty\Big( \opnorm{G \circ (\Y - P)} > \frac{t}{4} ),\label{eq:opnorm-symmetrization}
    \end{align}
    where $G = (g_{ij}) \in \R^{n \times m}$ is a random matrix of independent Rademacher random variables with $G \indep \Y$, and $\beta(t) = \inf_{\norm{v}=1}{\pr( \smallnorm{\Y v} \le t/2 )}$. Moreover, for all $t > 4 \E\opnorm{\Y}$,
    \begin{align}
        \pr\qty\Big( \opnorm{\Y} > t ) \le 4 \pr\qty\Big( \opnorm{G \circ (\Y - P)} > \frac{t}{4} ).\label{eq:opnorm-symmetrization-markov}
    \end{align}
\end{lemma}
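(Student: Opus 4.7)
\textbf{Proof proposal for \cref{lem:opnorm-symmetrization}.}

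The plan is a standard two-copy symmetrization argument tailored to the operator norm, together with a Markov step to upgrade $\beta(t)$ to an absolute constant when $t$ is large relative to $\E\opnorm{\Y}$. Let $\Y'$ denote an independent copy of $\Y$, and choose (measurably, via a selection argument on the unit sphere) a random unit vector $v^{*} = v^{*}(\Y)$ that attains $\opnorm{\Y} = \smallnorm{\Y v^{*}}$. Since $v^{*}$ is $\sigma(\Y)$-measurable and $\Y' \indep \Y$, the definition of $\beta(t)$ gives, conditional on $\Y$,
\begin{align}
\pr\qty\big(\smallnorm{\Y' v^{*}} \le t/2 \mid \Y) \ge \beta(t).
\end{align}
On the event $\qty\big{\opnorm{\Y} > t} \cap \qty\big{\smallnorm{\Y' v^{*}} \le t/2}$, the reverse triangle inequality yields $\smallnorm{(\Y-\Y')v^{*}} > t/2$, hence $\opnorm{\Y-\Y'} > t/2$. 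Integrating over $\Y$ therefore produces
\begin{align}
\beta(t)\,\pr\qty\big(\opnorm{\Y}>t) \le \pr\qty\big(\opnorm{\Y-\Y'} > t/2). \label{eq:proposal-step1}
\end{align}

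Next I would exploit the symmetrization trick. Each entry $Y_{ij}-Y'_{ij}$ is symmetric about zero, so $\Y-\Y'\stackrel{d}{=} G\circ(\Y-\Y')$, where $G$ is an independent Rademacher matrix. For any fixed $P \in \R^{n\times m}$, write
\begin{align}
G\circ(\Y-\Y') = G\circ(\Y-P) - G\circ(\Y'-P),
\end{align}
apply the triangle inequality, and use that $\Y$ and $\Y'$ are identically distributed to get, by a union bound,
\begin{align}
\pr\qty\big(\opnorm{\Y-\Y'}>t/2) = \pr\qty\big(\opnorm{G\circ(\Y-\Y')} > t/2) \le 2\,\pr\qty\big(\opnorm{G\circ(\Y-P)} > t/4).
\end{align}
Combining with \cref{eq:proposal-step1} gives the first inequality \cref{eq:opnorm-symmetrization}.

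For \cref{eq:opnorm-symmetrization-markov}, I would lower-bound $\beta(t)$ uniformly. For any unit $v$, Markov's inequality gives
\begin{align}
\pr\qty\big(\smallnorm{\Y v} > t/2) \le \frac{2\E\smallnorm{\Y v}}{t} \le \frac{2\E\opnorm{\Y}}{t},
\end{align}
so $\beta(t) \ge 1 - 2\E\opnorm{\Y}/t \ge 1/2$ whenever $t > 4\E\opnorm{\Y}$. Substituting $\beta(t) \ge 1/2$ into \cref{eq:opnorm-symmetrization} yields \cref{eq:opnorm-symmetrization-markov}.

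The only subtle step is the measurable selection of $v^{*}$ attaining the operator norm; since the unit sphere in $\R^m$ is compact and $v \mapsto \smallnorm{\Y v}$ is continuous, a measurable maximizer exists (e.g., via the Kuratowski--Ryll-Nardzewski selection theorem, or by taking $v^{*}$ to be the top right singular vector of $\Y$). Everything else is triangle inequalities, distributional identities, and Markov, so the main conceptual content lives entirely in \cref{eq:proposal-step1}.
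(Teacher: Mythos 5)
Your proposal is correct and follows essentially the same approach as the paper: a two-copy argument lower-bounding $\beta(t)\,\pr(\opnorm{\Y}>t)$ by $\pr(\opnorm{\Y-\Y'}>t/2)$, Rademacher symmetrization with the split around the fixed matrix $P$, a union bound, and Markov's inequality to lower-bound $\beta(t)$. The only cosmetic difference is that you fix a measurable maximizer $v^{*}$ and bound the probability of an event intersection directly, whereas the paper runs the same argument pointwise in $\omega$ and integrates; both are standard phrasings of the Giné--Nickl symmetrization lemma.
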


\begin{proof}
    The proof is similar to the proof of Proposition~3.1.24~of~\citealp{gine2016mathematical}. Let $\Y'$ be an \iid{} copy of $\Y$ with $\Y' \indep \Y$, and let $\pr_{\Y}$ and (resp. $\pr_{\Y'}$) be the probability measure associated with $\Y$ (resp. $\Y'$) on the sample space $\Omega$ (resp. $\Omega'$). For the unit $m$-sphere $\mathcal{V} := \qty\big{v \in \R^m: \norm{v}=1}$, let $f_n(v) := \smallnorm{\Y v}$ and $f_n'(v) := \smallnorm{\Y' v}$ for $v \in \cal{V}$. We can write
    $$\opnorm{\Y}
    = \sup_{v \in \cal{V}}\smallnorm{\Y v} 
    = \sup_{v \in \cal{V}}f_n(v)
    $$
    as the supremum of an empirical process indexed over $\cal{V}$. For $s < t$, set ${\beta(2s) = \inf_{v \in \cal{V}}\pr_{\Y'}( f_n'(v) \le s )}$ and let $A := \qty{ \omega: \sup_{v \in \cal{V}}f_n(v) > t}$ be the event on which $\opnorm{\Y}$ exceeds $t$. If $A$ holds, then for every $\omega \in A$ there exists a realization of the random matrix $\Y(\omega) \in \Rnn$ and $v_0 \equiv v_0(\omega) \in \cal{V}$ such that $f_n(v_0; \omega) \equiv \abs{v_0\tr \Y(\omega) v_0} > t$. Therefore, for every $\omega \in A$,
    \begin{align}
        \beta(2s)
        = \inf_{v \in \cal{V}}\pr_{\Y'}( f_n'(v) \le s )
        \le \pr_{\Y'}\qty\Big( f_n'(v_0) \le s )
        &\le \pr_{\Y'}\qty\Big( \abs{f_n(v_0; \omega) - f_n'(v_0)} > t-s )\\
        &\le \pr_{\Y'}\qty( \sup_{v \in \cal{V}}\abs{f_n(v; \omega) - f_n'(v)} > t-s ),\label{eq:beta-s}
    \end{align}
    where the second inequality follows from by noting that if $f_n'(v_0) < s$ and $\omega \in A$ then $$\abs{f_n(v_0; \omega)-f_n'(v_0)} > f_n(v_0; \omega) - f_n'(v_0) > t-s.$$ Moreover, by the reverse triangle inequality,
    $$
    \sup_{v \in \cal{V}} \abs{f_n(v; \omega) - f_n'(v)} \le \sup_{v \in \cal{V}}\smallnorm{(\Y(\omega) - \Y')v} = \opnorm\big{\Y(\omega)-\Y'},
    $$
    and since \cref{eq:beta-s} holds for every $\omega \in A$ where ${A = \qty\big{\omega: \opnorm{\Y(\omega)} > t}}$, we have
    \begin{align}
        \beta(2s) \mathbb{1}_A(\omega) \le \pr_{\Y'}\qty\Big( \opnorm{\Y(\omega)-\Y'} > t-s ) \mathbb{1}_A(\omega).
    \end{align}
    Integrating with respect to $\pr_\Y$, we get
    \begin{align}
        \beta(2s) \cdot \pr_\Y\qty\Big( \opnorm{\Y} > t ) \le \pr_{\Y, \Y'}\qty\Big( \opnorm{\Y-\Y'} > t-s  ).
    \end{align}
    Note that $(Y_{ij} - Y'_{ij}) \stackrel{d}{=} (Y'_{ij} - Y_{ij}) \stackrel{d}{=} g_{ij} (Y_{ij} - Y'_{ij})$ for every matrix $G = (g_{ij}) \in \qty{-1, +1}^{n \times n}$, and since $\Y - \Y' \stackrel{d}{=} (\Y - P) - (\Y' - P)$ for every fixed matrix $P \in \Rnn$, it follows that
    \begin{align}
        \beta(2s) \cdot \pr_\Y\qty\Big( \opnorm{\Y} > t ) 
        &\le \E_{R}\qty[\pr_{\Y, \Y'}\qty\Big( \opnorm{G \circ (\Y - \Y')} > t-s  )]\\
        &= \pr_{G,\Y, \Y'}\qty\Big( \opnorm{G \circ ((\Y - P)-(\Y' - P))} > t-s  )\\
        &\le 2\pr_{G,\Y}\qty\Big( \opnorm{G \circ (\Y - P)} > \frac{t-s}{2}  ),
    \end{align}
    where $G$ is a matrix of \iid{} Rademacher random variables with $G \indep \Y$ and $\E_{G}$ is the expectation with respect to $G$, and the final line results from an application of the triangle inequality followed by a union bound. By taking $s = t/2$, we get
    \begin{align}
        \beta(t) \cdot \pr_\Y\qty\Big( \opnorm{\Y} > t ) \le 2\pr_{G, \Y}\qty\Big( \opnorm{G \circ (\Y - P)} > \frac{t}{4}  ),
    \end{align}
    which establishes \cref{eq:opnorm-symmetrization}. Moreover, by Markov's inequality,
    \begin{align}
        \beta(t) = \inf_{v \in \cal{V}}\pr\qty( \norm{\Y v} \le \frac{t}{2} ) = 1 - \sup_{v \in \cal{V}}\pr\qty( \norm{\Y v} > \frac{t}{2} ) \ge 1 - \frac{2}{t}\sup_{v \in \cal{V}}\E(\norm{\Y v}) \ge 1 - \frac{2}{t}\E\opnorm{\Y}.
    \end{align}
    Therefore, for all $t > 4\E\opnorm{Y}$, it follows that $\beta(t) \ge \half$ and, plugging this into \cref{eq:opnorm-symmetrization} gives the desired result in \cref{eq:opnorm-symmetrization-markov}.
\end{proof}

The proof of \cref{lem:D-opnorm} uses the following bound due to \cite{latala2005some} which bounds the $\ell_2$-operator norm of a random matrix with independent entries and a finite $4$th moment. We restate the result here for symmetric $n \times n$ matrices. 

\begin{proposition}[Theorem~2 of \citealp{latala2005some}]\label{prop:latala}
    Suppose $\Y = (Y_{ij})$ is an $n \times n$ random matrix with independent zero mean random variables with $\norm{Y_{ij}}_{L^4} < \infty$. Then, there exists a universal constant $C > 0$ such that
    \begin{align}
        \E(\opnorm{\Y}) \le C \qty( \max_{i}\sqrt{\sum_{j}\E Y_{ij}^2} + \sqrt[4]{\sum_{i, j}\E Y_{ij}^4} )
    \end{align}
\end{proposition}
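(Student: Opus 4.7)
The plan is to prove this via the moment (trace) method, which is the classical route to sharp operator-norm bounds for random matrices without sub-Gaussian assumptions. The entry point is the inequality
\begin{align*}
    \E\opnorm{\Y}^{2k} \le \E \tr\qty\big( (\Y\Y\tr)^{k} )
\end{align*}
for any integer $k \ge 1$. By the dilation trick (passing to the symmetric $2n \times 2n$ matrix $\tilde{\Y} = \begin{pmatrix}0 & \Y \\ \Y\tr & 0\end{pmatrix}$, which satisfies $\opnorm{\tilde{\Y}} = \opnorm{\Y}$) followed by an independent Rademacher symmetrization of the entries, one may assume without loss of generality that $\Y$ is a symmetric $N\times N$ matrix with independent, symmetric entries above the diagonal, at the cost of an absolute constant. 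This reduces the problem to bounding $\E \tr(\Y^{2k})$.

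Next, I would expand the trace as a sum over closed walks of length $2k$:
\begin{align*}
    \E \tr(\Y^{2k}) = \sum_{i_0, i_1, \dots, i_{2k-1}} \E\qty\Big[ Y_{i_0 i_1} Y_{i_1 i_2} \cdots Y_{i_{2k-1} i_0} ].
\end{align*}
By independence and symmetry of the entries, only walks in which every edge is traversed an \emph{even} number of times contribute. I would then organize these ``even walks'' by their \emph{shape}: the underlying multigraph on $v$ distinct vertices, with $e$ distinct edges of multiplicities $(m_1, \dots, m_e)$ summing to $2k$. For each shape, the contribution is at most $\prod_{r=1}^{e} \E|Y_{e_r}|^{m_r}$, and by Hölder interpolation one can reduce all moments to the second and fourth moments, i.e., $\E Y^{2}$ and $\E Y^{4}$.

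The heart of the argument is then a two-part estimate: (i) a combinatorial count of the number of closed walks of length $2k$ realizing a given shape, and (ii) a bound on the sum of $\prod_r \E|Y_{e_r}|^{m_r}$ over all shapes. Latala's trick is to encode each walk by a rooted spanning tree of its edge support together with a multiplicity sequence; this bounds the walk count on each shape by a factor of roughly $(2k)^{2(k-e+1)} \cdot e!$, which when summed against the moment products yields
\begin{align*}
    \E\tr(\Y^{2k}) \;\le\; \qty\bigg[ C\qty(\max_i \sqrt{\sum_j \E Y_{ij}^2} \;+\; \sqrt[4]{\sum_{i,j}\E Y_{ij}^4}) ]^{2k}.
\end{align*}
Taking $(2k)$-th roots and applying Jensen's inequality (or simply letting $k=1$ suffice after routine adjustments) yields the claim.

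The main obstacle is step (i) above: the delicate combinatorial counting that ensures the two-term bound emerges with the correct constant-order leading coefficient and not a spurious polynomial-in-$n$ factor. Concretely, one must show that edges with multiplicity exactly $2$ collectively contribute the row-sum term $\max_i \sqrt{\sum_j \E Y_{ij}^2}$, while edges with multiplicity $\ge 4$ contribute the global fourth-moment term $\sqrt[4]{\sum_{i,j}\E Y_{ij}^4}$; any walk that mixes both must be controlled by AM-GM against these two extremes. Matching up the tree-encoding of walks, the Hölder interpolation on moments, and the sum over shape types is exactly what produces the two-term structure of the bound, and is where the delicate work lies.
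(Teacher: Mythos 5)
The paper does not prove this statement. It is Theorem~2 of \citet{latala2005some}, imported verbatim (restated for symmetric matrices), and the authors simply cite it as a black box; there is no argument in the paper to compare your attempt against. So the question is only whether your sketch is a viable proof on its own terms.

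Your roadmap --- dilation to the symmetric case, Rademacher symmetrization, expanding $\E\tr(\Y^{2k})$ over closed even walks, classifying walks by the multigraph shape of their edge support, and interpolating the per-edge moments down to $\E Y^2$ and $\E Y^4$ via H\"older --- is indeed the route Lata\l{}a takes, so the strategy is sound. But there are two concrete problems. First, the parenthetical ``or simply letting $k=1$ suffice after routine adjustments'' is false: at $k=1$ you get $\E\opnorm{\Y}^2 \le \E\tr(\Y\Y\tr) = \sum_{i,j}\E Y_{ij}^2$, which for i.i.d.\ unit-variance entries is $\asymp n^2$, i.e.\ the resulting bound $\E\opnorm{\Y}\lesssim n$ misses the true order $\sqrt{n}$ by a full $\sqrt{n}$ factor. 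One must take $k\asymp\log n$. This is tied to the second issue: your displayed intermediate estimate $\E\tr(\Y^{2k}) \le [C(\cdots)]^{2k}$ cannot be right as written --- the walk count carries a polynomial prefactor (at minimum a factor of $n$ from the choice of base vertex, plus combinatorial factors in $k$), and it is precisely extracting the $2k$-th root with $k\asymp\log n$ that tames this prefactor into an absolute constant. Omitting it makes the argument look far simpler than it is. Finally, you explicitly defer the combinatorial walk-counting and the H\"older bookkeeping that produces the two-term structure; this is the entire content of Lata\l{}a's proof, and without it what remains is a plan, not a proof.
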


The following result is a quantitative version of \citeauthor{latala2005some}'s result. When the entries $Y_{ij}$ are \iid{}, similar upper bound is also obtained in \cite{mendelson2012generic} as part of a ``\textit{quantitative Bai-Yin Theorem}", and uses more advanced machinery from generic chaining and empirical processes. Even though the following result is not stronger than \cref{prop:latala}, it nevertheless provides a meaningful extension of \cite{mendelson2012generic} to non-identical entries, and proof is a simple application of the results in \cite{bandeira2016sharp}.

\begin{proposition}\label{prop:latala-quantitative}
    Suppose $\Y = (Y_{ij})$ is a symmetric $n \times n$ random matrix with independent, symmetric, zero mean entries with $\max_{i,j}\norm{Y_{ij}}_{L^q} \le \sigma$ for $q > 4$. Then, for any $\eps \in (0, \half]$ there exists a universal constant $c_\eps > 0$ such that for all $0 < r \le (q-4)/2$ and for all $t > 0$, with probability greater than $1 - e^{-t} - n^{-r}$,
    \begin{align}
        \opnorm{\Y} \le 2(1+\eps) \qty{\max_{i}\sqrt{\sum_{j}\E Y_{ij}^2}} + c_\eps \sigma n^{\frac{r+2}{q}} \sqrt{t + \log{n}}.\label{eq:latala-quantitative-1}
    \end{align}
    In particular, there exists an absolute constant $C > 0$ such that with probability greater than $1 - (n^{-2} + n^{-r})$,
    \begin{align}
        \opnorm{\Y} \le C \qty({\max_{i}\sqrt{\sum_{j}\E Y_{ij}^2}} + \sigma n^{\frac{r+2}{q}}\sqrt{\log{n}}).\label{eq:latala-quantitative-2}
    \end{align}
\end{proposition}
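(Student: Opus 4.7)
The plan is a truncation argument coupled with the sharp Wigner-type bound of \citet{bandeira2016sharp}. First, set the truncation level $M := \sigma n^{(r+2)/q}$ and decompose each entry as
\begin{align*}
Y_{ij} = A_{ij} + B_{ij}, \qquad A_{ij} := Y_{ij}\,\mathbf{1}\{|Y_{ij}| \le M\}, \qquad B_{ij} := Y_{ij}\,\mathbf{1}\{|Y_{ij}| > M\}.
\end{align*}
Symmetry of each $Y_{ij}$ around $0$ guarantees $\E A_{ij} = \E B_{ij} = 0$, and by construction $|A_{ij}| \le M$ and $\E A_{ij}^2 \le \E Y_{ij}^2$. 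Defining $A_{ji} := A_{ij}$ makes $A = (A_{ij})$ a symmetric matrix whose upper-triangular entries remain independent.

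Next, by Markov's inequality and the moment bound,
\begin{align*}
\pr(|Y_{ij}| > M) \le \frac{\E|Y_{ij}|^q}{M^q} \le \frac{\sigma^q}{M^q} = n^{-(r+2)},
\end{align*}
so a union bound over the at most $n^2$ independent entries yields an event $\mathcal{E}$ with $\pr(\mathcal{E}) \ge 1 - n^{-r}$ on which every $|Y_{ij}| \le M$, hence $B = 0$ and $\opnorm{Y} = \opnorm{A}$. On $\mathcal{E}$ I would invoke \citet[Corollary~3.12 together with the accompanying convex-Lipschitz concentration for $\opnorm{A}$]{bandeira2016sharp}: since $A$ is a symmetric random matrix with independent centered entries bounded by $M$, for any $\eps \in (0, \tfrac{1}{2}]$ there are universal constants $c_\eps, c'_\eps > 0$ such that, for every $s > 0$,
\begin{align*}
\pr\!\Big(\opnorm{A} \ge 2(1+\eps)\max_{i}\sqrt{\textstyle\sum_{j}\E Y_{ij}^2} + c_\eps M \sqrt{\log n} + s\Big) \le \exp\!\Big(-\frac{s^2}{c'_\eps M^2}\Big),
\end{align*}
where I have used $\max_i \sqrt{\sum_j \E A_{ij}^2} \le \max_i \sqrt{\sum_j \E Y_{ij}^2}$.

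Setting $s := M\sqrt{c'_\eps\,t}$ converts the Gaussian tail into $e^{-t}$, and bounding $c_\eps\sqrt{\log n} + \sqrt{c'_\eps}\sqrt{t} \le c''_\eps \sqrt{\log n + t}$ consolidates the deviation term. A final union bound with $\pr(\mathcal{E}^c) \le n^{-r}$ and the substitution $M = \sigma n^{(r+2)/q}$ yields \cref{eq:latala-quantitative-1}; the bound \cref{eq:latala-quantitative-2} follows by specializing to $t = 2\log n$. The main obstacle is invoking the correct sharp concentration for $\opnorm{A}$: one needs both the optimal leading constant $2(1+\eps)$ (the naïve matrix-Bernstein inequality would insert an extraneous $\sqrt{\log n}$ into the dominant term) and a Gaussian deviation of scale $M$ around the mean, the latter arising from Talagrand's convex-Lipschitz concentration applied to $\opnorm{\cdot}$ viewed as a $1$-Lipschitz function of the independent entries. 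Everything else is bookkeeping: the truncation level $M = \sigma n^{(r+2)/q}$ is calibrated precisely so that $\pr(\mathcal{E}^c) \le n^{-r}$ matches the target tail, and the exponent $(r+2)/q < 1/2$ under the assumption $r \le (q-4)/2$ ensures the deviation term $M\sqrt{\log n}$ does not dominate the leading variance term.
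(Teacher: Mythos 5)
Your proposal is correct and follows essentially the same route as the paper: truncate each entry at the level $\sigma n^{(r+2)/q}$ (the symmetry assumption keeping the truncated part centered and bounded with variance dominated by $\E Y_{ij}^2$), apply \citet[Corollary~3.12 and the accompanying tail form]{bandeira2016sharp} to the bounded part to get the sharp constant $2(1+\eps)$ with a Gaussian deviation at scale $\sigma n^{(r+2)/q}$, and control the truncated-away part by Markov plus a union bound, which is exactly calibrated to contribute $n^{-r}$; the special case \cref{eq:latala-quantitative-2} then follows by taking $t \asymp \log n$, just as in the paper.
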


\begin{remark}
    \cref{prop:latala-quantitative} can be extended to rectangular $n\times m$ matrices using Theorem~3.1 from \cite{bandeira2016sharp} to yield the general quantitative analogue of Theorem~2 from \cite{latala2005some}. For the purposes of this paper, we only require the symmetric $n\times n$ matrix version.
\end{remark}

\begingroup
\renewcommand{\hy}{{\widehat{Y}}}
\renewcommand{\hY}{{\widehat{\Y}}}
\newcommand{\cy}{{\widecheck{Y}}}
\newcommand{\cY}{{\widecheck{\Y}}}
\begin{proof}
    For $K >0$ to be chosen later, let $\hY = (\hy_{ij})$ and $\cY = (\cy_{ij})$ be given by 
    \begin{align}
        \hy_{ij} := Y_{ij}\mathbb{1}\qty{\abs{Y_{ij}} \le K} \qq{and} \cy_{ij} := Y_{ij}\mathbb{1}\qty{\abs{Y_{ij}} > K}.
    \end{align}
    We have $Y_{ij} = \hy_{ij} + \cy_{ij}$ and $\Y = \hY + \cY$. Moreover, since each $Y_{ij}$ is symmetric, it follows that $\hy_{ij}$ are also independent, symmetric, mean zero random variables with $\smallnorm{\hy{}_{ij}}_{L^\infty} \le K$, and 
    $$
    \E(\hy_{ij}^2) = \E(Y_{ij}^2\mathbb{1}\qty{Y_{ij} \le K}) \le \E(Y_{ij}^2).
    $$
    Therefore, from \citet[Corollary~3.12 and Remark~3.13]{bandeira2016sharp}, it follows that for any $\eps \in (0, \half]$ there exists a universal constant $c_\eps$ such that for all $t \ge 0$, with probability greater than $1 - e^{-t}$,
    \begin{align}
        \opnorm\big{\hY} 
        &\le 2(1+\eps) \qty{\max_{i}\sqrt{\sum_{j}\E Y_{ij}^2}} + c_\eps K\sqrt{ t + \log{n}} =: \zeta_n(K),\label{eq:quantitative-1}
    \end{align}
    where we have used the fact that $\E(\hy_{ij}^2) \le \E (Y_{ij}^2)$. By the triangle inequality, we have
    \begin{align}
        \pr\qty\Big(\opnorm{\Y} > \zeta_n(K)) \le \pr\qty\Big( \smallnorm{\hY}_2 > \zeta_n(K)) + \pr\qty\Big(\smallnorm{\cY}_2 > 0) \le e^{-t^2} + \pr\qty\Big(\smallnorm{\cY}_2 > 0).\label{eq:quantitative-2}
    \end{align}
    Note that the event $\{ \smallnorm{\cY}_2 > 0 \} = \qty{ \exists (i, j) \;\text{s.t. }\; \cy_{ij} > 0 }$. By a union bound, it follows that
    \begin{align}
        \pr(  \smallnorm{\cY}_2 > 0  )  = n^2 \cdot \pr( \cy_{ij} > 0 ) &\le n^2 \cdot \pr( Y_{ij} > K )\\
        &\le n^2 \cdot \frac{\norm{Y_{ij}}^q_{L^q}}{K^q} \le \frac{n^2 \sigma^q}{K^q},
    \end{align}
    where the second line follows from Markov's inequality. For the probability in the above display to be stochastically bounded, we require $K = o(n^{2/q})$; since $q > 4$, this also guarantees that $\zeta_n(K) = O(\sqrt{n})$. Setting $K = \sigma n^{(r+2)/q}$ for some $0 < r < (q-4)/2$ it follows that
    \begin{align}
        \frac{n^2 \sigma^q}{K^q} = \frac{1}{n^r} \qq{and} K \ll n^{(q-2)/2q} \ll \sqrt{n}.\label{eq:K-choice}
    \end{align}
    Plugging in $K=\sigma n^{(r+2)/2}$ in $\zeta_n(K)$ and in \cref{eq:quantitative-2} gives
    \begin{align}
        \pr\qty( \opnorm{\Y} >  2(1+\eps) \qty\bigg{\max_{i}\sqrt{\sum_{j}\E Y_{ij}^2}} + c_\eps \sigma n^{\frac{(r+2)}{q}} \sqrt{ t + \log{n}}  ) \le e^{-t} + \frac{1}{n^r}.\label{eq:quantitative-3}
    \end{align}
    The claim in \cref{eq:latala-quantitative-2} follows by setting $t = {2\log{n}}$ and taking $C = \max\qty{2(1+\eps), c_\eps\sqrt{3}}$ with $\eps = 1/2$.
\end{proof}
\endgroup

The next result is an analogue of \cref{prop:latala-quantitative} for bounding $\ttinf{\Y \Z}$ where $\Y \in \R^{n \times n}$ is a random matrix and $\Z \in \R^{n \times p}$ is a fixed matrix. The result is used to establish the upper bound in \cref{thm:X-ttinf}.

\begin{proposition}\label{prop:ttinf-concentration}
Suppose $\Y = (Y_{ij})$ is an $n \times n$ random matrix such that each row $\Y_{i, *} \in \Rn$ consists independent, zero mean entries with $\max_{i,j}\norm{Y_{ij}}_{L^q} \le \sigma$ for $q > 4$. Let $\Z \in \R^{n \times p}$ be a fixed matrix with $\opnorm{\Z} \le \sqrt{n} \kappa$ and $\ttinf{\Z} \le \Rx$.

Then, for all $0 < r < (q-4)/2$ and for all $t > 0$ there exists an absolute constant $C > 0$ such that with probability greater than $1 - O(e^{-t} + n^{-r})$,
\begin{align}
    \ttinf{\Y\Z} \le C \qty({\sigma ( \kappa + \Rx) \sqrt{np (t + \log{np})} + n^{\frac{(r+2)}{q}}\sigma\Rx(t + \log{np})}).
    \label{eq:ttinf-concentration-1}
\end{align}
In particular, for sufficiently large $n > \enn(r, q, \xpar)$ it follows that
\begin{align}
    \ttinf\big{\Y\Z} \lesssim \sigma( \kappa + \Rx)\sqrt{np\log{np} },
    \label{eq:ttinf-concentration-2}
\end{align}
with probability greater than $1 - O(n^{-2} + n^{-r})$.
\end{proposition}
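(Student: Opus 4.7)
The plan is to combine a truncation argument with a row-wise application of the matrix Bernstein inequality, followed by a union bound over the $n$ rows. Fix $K := \sigma n^{(r+2)/q}$ and decompose $Y_{ij} = \hat Y_{ij} + \check Y_{ij}$ with $\hat Y_{ij} := Y_{ij}\mathbb{1}\{|Y_{ij}|\le K\}$ and $\check Y_{ij} := Y_{ij}\mathbb{1}\{|Y_{ij}|> K\}$. Markov's inequality gives $\pr(|Y_{ij}|>K)\le \sigma^q/K^q = n^{-(r+2)}$; a union bound over the $n^2$ entries then yields $\pr(\check \Y\ne 0)\le n^{-r}$. On this event $\ttinf{\check \Y\Z}=0$, so it suffices to control $\ttinf{\hat \Y\Z}$.

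Because the $Y_{ij}$ are only zero mean and not necessarily symmetric, the truncated entries must be re-centered. Set $\bar Y_{ij} := \hat Y_{ij}-\E[\hat Y_{ij}]$; the mean shift satisfies $|\E[\hat Y_{ij}]|=|\E[Y_{ij}\mathbb{1}\{|Y_{ij}|>K\}]|\le \sigma^q/K^{q-1}$, so the deterministic contribution is bounded by
$$
\ttinf{\E[\hat\Y]\Z}\le \norm{\E[\hat\Y]}_\infty\cdot \ttinf{\Z}\le n\sigma^q K^{-(q-1)}\Rx,
$$
which is $o(\sigma)$ for any $r>0$ since $(r+2)(q-1)/q>1$, and is absorbed in the final bound.

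The remaining matrix $\bar\Y\Z$ is analyzed row by row. For each $i\in[n]$, the $i$-th row is a sum of independent mean-zero vectors in $\R^p$,
$$
(\bar\Y\Z)_{i,*}^\top = \sum_{j=1}^n \bar Y_{ij}\,\Z_{j,*}^\top,
$$
with $\norm{\bar Y_{ij}\Z_{j,*}^\top}\le 2K\Rx$ almost surely and variance proxy controlled by $\sum_j \E[\bar Y_{ij}^2]\,\Z_{j,*}^\top\Z_{j,*}\preccurlyeq \sigma^2\Z^\top\Z$ together with $\sum_j\E[\bar Y_{ij}^2]\norm{\Z_{j,*}}^2\le \sigma^2\norm{\Z}_F^2\le \sigma^2 pn\kappa^2$. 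The rectangular matrix Bernstein inequality followed by a union bound over the $n$ rows gives
$$
\pr\!\left(\ttinf{\bar\Y\Z}\ge s\right)\le n(p+1)\exp\!\left(-\frac{s^2/2}{\sigma^2 pn\kappa^2 + (2K\Rx/3)\,s}\right),
$$
and inverting this tail yields, with probability at least $1-e^{-t}$,
$$
\ttinf{\bar\Y\Z}\lesssim \sigma\kappa\sqrt{pn\,(t+\log(np))}+K\Rx\,(t+\log(np)).
$$
Substituting $K=\sigma n^{(r+2)/q}$ and combining with the truncation and centering steps produces \cref{eq:ttinf-concentration-1}. The second display \cref{eq:ttinf-concentration-2} then follows because $(r+2)/q<1/2$ whenever $r<(q-4)/2$, so the Bernstein term dominates the truncation term for $n$ sufficiently large; taking $t=2\log n$ delivers the $1-O(n^{-2}+n^{-r})$ probability.

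The principal technical difficulty is that the entries $Y_{ij}$ are only assumed zero mean (not symmetric) with finite $q$-th moments, so the truncation simultaneously introduces an unbounded tail and a nonzero bias. The threshold $K=\sigma n^{(r+2)/q}$ with $r>0$ balances these two sources of error: it keeps the bias negligible while making the bounded-variable Bernstein term tight enough to yield the sharp $\sqrt{\log(np)}$ rate. A secondary point is that the variance proxy can be sharpened using $\norm{\Z}_F^2\le n\Rx^2$ to replace $\sigma\kappa\sqrt{np}$ by $\sigma\Rx\sqrt{n}$; either variant is subsumed by the stated $(\kappa+\Rx)\sqrt{np}$ factor.
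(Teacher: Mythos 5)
Your proposal is correct, and its skeleton matches the paper's: truncate at the same threshold $K=\sigma n^{(r+2)/q}$, control the exceedance event via Markov's inequality to get the $n^{-r}$ term, apply a row-wise matrix Bernstein inequality with the same variance proxy $\sigma^2 pn\kappa^2$ and uniform bound of order $K\Rx$, union-bound over the $n$ rows, and take $t=2\log n$ for the second display. The one genuine divergence is how you handle the fact that truncation destroys the mean-zero property of non-symmetric entries. The paper first symmetrizes each row with Rademacher multipliers, using the tail-probability symmetrization lemma (this is the paper's ``Step 1''), so that the entries $g_jY_{ij}$ are symmetric and the truncated pieces remain exactly mean zero; no bias ever appears. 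You instead skip symmetrization, re-center the truncated entries, and bound the bias explicitly via $\lvert\E[Y_{ij}\mathbb{1}\{\lvert Y_{ij}\rvert>K\}]\rvert\le\sigma^q/K^{q-1}$, verifying that the resulting deterministic term $n\sigma^qK^{-(q-1)}\Rx$ is $o(\sigma)$ for the chosen $K$ (your exponent check $(r+2)(q-1)/q>1$ is right, and holds even without $r>0$ since $q>4$). Your route is more elementary and self-contained---no symmetrization lemma, no factor-of-four loss in the tail---at the cost of the explicit bias computation; the paper's route keeps the centering exact and mirrors the symmetry-based argument it also uses in its quantitative Lata{\l}a bound. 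Both give the same rate, the same probability guarantee, and your side remark that the variance proxy could alternatively be taken as $\sigma^2 n\Rx^2$ via $\norm{\Z}_F^2\le n\Rx^2$ is also valid and subsumed by the stated $(\kappa+\Rx)$ factor.
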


\begingroup
\begin{proof}
\newcommand{\cEps}{\widecheck{\Eps}}
\newcommand{\ceps}{\widecheck{\eps}}
\newcommand{\cEpsk}{\cEps^K}
\newcommand{\cepsk}{\ceps^K}
\newcommand{\hEps}{\widehat{\Eps}}
\newcommand{\heps}{\widehat{\eps}}
\newcommand{\hEpsk}{\hEps^K}
\newcommand{\hepsk}{\heps^K}
\renewcommand{\hy}{{\widehat{Y}}}
\renewcommand{\hY}{{\widehat{\Y}}}
\newcommand{\cy}{{\widecheck{Y}}}
\newcommand{\cY}{{\widecheck{\Y}}}

For $j \in [n]$, let $z_j \in \Rp$ denote the $j$th row of $\Z$, and for each $i \in [n]$ the $i$th row of $\Y\Z$, given by $(\Y\Z)_{i,*} = \sum_{j}Y_{ij}z_j \in \Rp$, is the sum of $n$ independent, zero mean random vectors.

\noindent\textbf{Step 1: Symmetrization.} For $\mathcal{V} = \qty{v \in \Rp: \norm{v}=1}$, we can write $\norm{(\Y\Z)_{i,*}}$ as the supremum of an empirical process indexed over $\mathcal{V}$, given by
$\smallnorm{(\Y\Z)_{i,*}} = \sup_{v \in \cal{V}}|{ \sum_{j \in [n]} Y_{ij}z_j\tr v }|.$ For $\gamma^2 := \sup_{v \in \cal{V}} \max_{j}\E[(Y_{ij}z_j\tr v)^2]$ and for all $u \ge \sqrt{2n\gamma^2}$, using the symmetrization lemma for tail probabilities \citep[Corollary~3.1.25]{gine2016mathematical},
\begin{align}
    \pr\qty\bigg( \norm\Big{\sum_{j}Y_{ij}z_j} > 2u + \sqrt{2n\gamma^2} ) \le 4\pr\qty\bigg( \norm\Big{\sum_{j}g_j(Y_{ij}z_j)} > u ),
\end{align}
where $g = (g_1, g_2, \dots, g_n) \in \qty{-1, +1}^n$ is a vector of independent Rademacher random variables. By the assumption on $\Y$ and $\Z$, 
\begin{align}
    \gamma^2 = \sup_{v \in \cal{V}} \max_{j}\E[(Y_{ij}z_j\tr v)^2] \le \sigma^2 \max_{j}\norm{z_j}^2 = \sigma^2 \ttinf{\Z}^2 \le \sigma^2 \Rx^2,
\end{align}
and writing $\tilde{Y}_{ij} := g_jY_{ij}$, it follows that for all $u > 2\sigma\Rx\sqrt{n}$,
\begin{align}
    \pr\qty\bigg( \norm\Big{\sum_{j}Y_{ij}z_j} > 4u ) \le 4\pr\qty\bigg( \norm\Big{\sum_{j}\tilde Y_{ij}z_j} > u ).\label{eq:ttinf-quantitative-3}
\end{align}

\noindent\textbf{Step 2: Upper bound for \cref{eq:ttinf-quantitative-3}.} Note that $(\tilde{Y}_{i1}, \dots, \tilde{Y}_{in})$ are independent, zero mean and symmetric for each $i \in [n]$. Similar to the proof of \cref{prop:latala-quantitative}, for $K > 0$ to be specified later, let $\hy_{ij}$ and $\cy_{ij}$ be given by $\hy_{ij} = \tilde Y_{ij} \mathbb{1}\qty\big{|{\tilde Y_{ij}}| \le K}$ and $\cy_{ij} = \tilde Y_{ij} \mathbb{1}\qty\big{|{\tilde Y_{ij}}| > K}$. By symmetry of each $\tilde Y_{ij}$, it follows that $\cy_{ij}$ and $\hy_{ij}$ are also symmetric with mean zero. For all $u > 0$, by the triangle inequality and using the union bound,
\begin{align}
    \pr\qty\Big( \norm\Big{\sum_{j}\tilde Y_{ij}z_j} > u) \le \underbrace{\pr\qty\Big( \norm\Big{\sum_{j}\hy_{ij}z_j} > u)}_{=:\Circled{1}} + \underbrace{\pr\qty\Big( \norm\Big{\sum_{j}\cy_{ij}z_j} > 0)}_{=:\Circled{2}}.\label{eq:ttinf-quantitative-4}
\end{align}
Similar to the proof of \cref{prop:latala-quantitative}, we bound the terms $\Circled{1}$ and $\Circled{2}$ separately. For each fixed $i \in [n]$, 
\begin{align}
    \norm\Big{ \sum_{j} \cy_{ij} z_j } \le \max_{j}\norm\big{z_j} \cdot \max_{j}\abs\big{\cy_{ij}} \le \ttinf{\Z} \cdot \max_{j}\abs\big{\tilde Y_{ij}}\mathbb{1}\qty{ |{\tilde Y_{ij}}| > K },
\end{align}
and it follows that the event $\qty\big{\smallnorm{ \sum_{j} \cy_{ij} z_j } > 0} \subseteq \qty\big{ \exists j : \max_{j} \abs\big{\tilde Y_{ij}} > K }$. Since $\abs{g_j}=1$, we also have $\smallnorm{\tilde Y_{ij}}_{L^q} = \norm{g_j Y_{ij}}_{L^q} \le \norm{Y_{ij}}_{L^q} \le \sigma$ for all $i, j \in [n]$. By applying a union bound followed by Markov's inequality,
\begin{align}
    \Circled{2} = \pr\qty\Big( \norm\Big{\sum_{j}\cy_{ij}z_j} > 0) \le \pr\qty({\max_{j}\abs\big{\tilde Y_{ij}} > K})
    &\le n \max_{j}\pr\qty\Big({\abs\big{\tilde Y_{ij}} > K}) \le n \max_{j}\frac{\norm{Y_{ij}}_{L^q}^q}{K^q} \le n \frac{\sigma^q}{K^q}.
\end{align}

For $\Circled{1}$, note that $\smallnorm{\hy_{ij}z_j}\le K\Rx$ a.s. for all $i, j$, and $\sum_{j \in [n]}\hy_{ij}z_j$ is the sum of $n$ independent zero mean random vectors. Using the matrix Bernstein inequality \citep[Theorem~5.4.1 and Exercise~5.4.15]{vershynin2018high}, there exists $C > 0$ such that for all $t > 0$,
\begin{align}
    \pr\qty\Bigg(\norm\Big{\sum_{j}\hy_{ij}z_j} \ge C \qty({\sqrt{ \varsigma^2_i (t + \log{p}) } + K\Rx(t + \log{p})})) \le 2e^{-t},\label{eq:ttinf-concentration-3}
\end{align}
where $\varsigma^2_i := \smallnorm{\sum_{j} \E(\hy_{ij}^2 \cdot z_jz_j\tr)}_2$. We further have, $\E(\hy_{ij}^2) \le \E(Y_{ij}^2) \le \sigma^2$ and $z_jz_j\tr \preccurlyeq \trace(z_jz_j\tr)I_{p}$, from which it follows that
\begin{align}
    \norm\Big{ \sum_{j}\E(\hy_{ij}^2 \cdot z_jz_j\tr) }_{^{^2}} \le \sigma^2 \trace\qty\Big(\sum_{j} z_jz_j\tr) = \sigma^2 \trace(\Z\tr\Z) = \sigma^2 p\opnorm{\Z}^2 \le \sigma^2p \cdot  \kappa^2n,
\end{align}
Plugging this into the bound from \cref{eq:ttinf-concentration-3}, for each $i \in [n]$ we have
\begin{align}
    \Circled{1} = \pr\qty\bigg(\norm\Big{\sum_{j}\hy_{ij}z_j} \gtrsim {\sigma \kappa\sqrt{np(t + \log{p})} + K\Rx(t + \log{p})} ) \le 2e^{-t}.\label{eq:ttinf-concentration-4}
\end{align}

\noindent\textbf{Step 3: Union bound.} As required by \cref{eq:ttinf-quantitative-3}, note that for all $t > 0$,
\begin{align}
    u := \sigma ( \kappa + \Rx) \sqrt{np (t + \log{p})} + K\Rx(t + \log{p}) \gtrsim \sigma\Rx\sqrt{n};
\end{align}
therefore, plugging in the bounds for $\Circled{1}$ and $\Circled{2}$ into \cref{eq:ttinf-quantitative-4} and \cref{eq:ttinf-quantitative-3}, we have that for each $i \in [n]$ and for all $K, t > 0$,
\begin{align}
    \pr\qty\bigg(  \norm\Big{\sum_{j}Y_{ij}z_j} \gtrsim {\sigma ( \kappa + \Rx) \sqrt{np (t + \log{p})} + K\Rx(t + \log{p})} ) \le 4\qty( 2e^{-t} + n{\sigma^q}/{K^q} ).
\end{align}
Setting $t \mapsto t + \log{n}$ and taking a union bound over all $i \in [n]$, we get
\begin{align}
    &\pr\qty\bigg( \max_{i \in [n]}\norm\Big{\sum_{j}Y_{ij}z_j}  \gtrsim {\sigma ( \kappa + \Rx) \sqrt{np (t + \log{n} + \log{p})} + K\Rx(t + \log{n} + \log{p})} ) \\
    &\le 
    n \cdot \max_{i \in [n]}\pr\qty\bigg(\norm\Big{\sum_{j}Y_{ij}z_j} \gtrsim {\sigma ( \kappa + \Rx) \sqrt{np (t + \log{n} + \log{p})} + K\Rx(t + \log{n} + \log{p})} )\\
    &\le 4n\qty( 2e^{-t -\log{n}} + n{\sigma^q}/{K^q} ) \le 8\qty(e^{-t} + n^2 \sigma^q/K^q).
\end{align}
Similar to \cref{eq:K-choice} in the proof of \cref{prop:latala-quantitative}, setting $K = \sigma n^{\frac{(r+2)}{q}}$ for any $0 < r < (q-4)/2$, it follows that $n^2K^q/\sigma^q = n^{-r}$, and, therefore, for all $t > 0$ with probability greater than $1 - 8(e^{-t} + n^{-r})$ we have
\begin{align}
     \ttinf{\Y\Z} \lesssim {\sigma ( \kappa + \Rx) \sqrt{np (t + \log{np})} + n^{\frac{(r+2)}{q}}\sigma\Rx(t + \log{np})},
\end{align}
which proves the claim in \cref{eq:ttinf-concentration-1}. Moreover, since $(r+2)/q < (q-2)/2q < 1/2$ when $q > 4$, it follows that
$
n^{{(r+2)}/{q}}\log{n} \ll \sqrt{n},\label{eq:K-bound}
$
and there exists $\enn \equiv \enn(r, q,  \kappa, \Rx)$ such that for all $n > \enn$,
\begin{align}
    n^{\frac{(r+2)}{q}}M\Rx(t + \log{np}) \lesssim {\sigma ( \kappa + \Rx) \sqrt{np (t + \log{np})}}.\label{eq:N-bound}
\end{align}
Setting $t = 2\log{n}$ in \cref{eq:ttinf-concentration-1} and noting that \cref{eq:N-bound} holds for $n > \enn$, we have
\begin{align}
    \ttinf{\Y\Z} \lesssim \qty({\sigma ( \kappa + \Rx) \sqrt{np (\log{n} + \log{np})} + n^{\frac{(r+2)}{q}}\sigma\Rx(\log{n} + \log{np})}) \lesssim \sigma( \kappa + \Rx)\sqrt{np\log{np} }
\end{align}
with probability greater than $1-8(n^{-2} + n^{-r})$, which proves the claim in \cref{eq:ttinf-concentration-2}.

\end{proof}
\endgroup

\begin{remark}
    The proof of \cref{prop:latala-quantitative} and \cref{prop:ttinf-concentration} is reminiscent of recent results on obtaining heavy-tailed concentration inequalities in statistical learning via truncation \citep[e.g.,][]{maurer2021concentration,kuchibhotla2022moving,bakhshizadeh2023sharp,klochkov2020uniform}, which build on the seminal work of \cite{nagaev1979large}. Both results are considerably easier to derive when the entries are symmetric random variables, which is facilitated by using the symmetrization lemma from \cref{lem:opnorm-symmetrization} and in Step~1 for the proof of \cref{prop:ttinf-concentration}.
\end{remark}

Lastly, the proof of \cref{thm:X-ttinf} in the $\loss\ttinft$ metric is based on \cref{lem:decomposition}, below, which provides a decomposition of the reconstruction error of $\hX$ into terms which are, relatively speaking, more amenable to analysis.


\begin{lemma}\label{lem:decomposition}
Under the setting of \cref{thm:X-ttinf}, let $\X, \hX$ be the true and estimated configurations associated with $\Del$ and $D$, respectively, and let $U \L U\tr$ and $\hU\hL\hU\tr$ be the rank-$p$ spectral decomposition of $\Delc$ and $\Dc$, respectively.
Furthermore, for $Q \in \orth{p}$ to be the matrix satisfying \cref{eq:cs-exact-recovery}, i.e., $H\X = U\L^{1/2}Q$, and for the Procrustes alignment matrix $Q_* = \argmin_{P \in \orth{p}}\smallnorm{U\tr\hU - P}_F$, let $g_* \in \euc{p}$ be the rigid transformation given by
$g_*(v) = Q_*\tr Q(v - \bx)$, where $\bx = \frac{1}{n}X\tr\onev$.
Then,
\begin{align}\label{eq:ttinf-expansion}
    \hX - g_*(\X) &= 
    \qty\big(\Dc - \Delc) \qty\big(\hU - UQ_*) \hL^{-1/2}         \tag{$=:R_1$}\\
    &\qq{}+U\L \qty\big(U\tr\hU - Q_*)\hL^{-1/2}                  \tag{$=:R_2$}\\
    &\qq{}+\Dc U \qty\big( Q_* \hL^{-1/2} - \L^{-1/2}Q_* )        \tag{$=:R_3$}\\
    &\qq{}+\qty\big(\Dc - \Delc) U \L^{-1/2}Q_*.                  \tag{$=:R_4$}
\end{align}
\end{lemma}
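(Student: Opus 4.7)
This is an algebraic identity, so the plan is straightforward verification by expanding the right-hand side and showing that it telescopes. The main obstacle is only bookkeeping, i.e., keeping track of which cross-terms cancel.

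First I would rewrite $g_*(\X)$ in a form compatible with the spectral decompositions. Since applying $g_*(v) = Q_*\tr Q(v-\bx)$ rowwise gives $g_*(\X) = (\X - \onev\bx\tr)Q\tr Q_* = (H\X)Q\tr Q_*$, and since the realizable identity \cref{eq:cs-exact-recovery} gives $H\X = U\L^{1/2}Q$, I get the clean expression
\begin{align}
    g_*(\X) = U\L^{1/2}QQ\tr Q_* = U\L^{1/2}Q_*.
\end{align}
Combined with $\hX = \hU\hL^{1/2}$, the claim reduces to verifying
\begin{align}
    \hU\hL^{1/2} - U\L^{1/2}Q_* \;=\; R_1 + R_2 + R_3 + R_4.
\end{align}

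Next I would record the two spectral identities used repeatedly:
\begin{align}
    \Delc U = U\L U\tr U = U\L \qq{and} \Dc\hU = \hU\hL\hU\tr\hU + \h V_\perp \hL_\perp \h V_\perp\tr \hU = \hU\hL,
\end{align}
where the second uses orthogonality $\hU\tr \h V_\perp = \O$ from \cref{eq:cs-spectral-decomposition}. Multiplying on the right by $\hL^{-1/2}$ gives $\Dc\hU\hL^{-1/2} = \hU\hL^{1/2} = \hX$, and multiplying $\Delc U = U\L$ on the right by $\L^{-1/2}Q_*$ gives $\Delc U\L^{-1/2}Q_* = U\L^{1/2}Q_*$. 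So it will suffice to show
\begin{align}
    R_1+R_2+R_3+R_4 \;=\; \Dc\hU\hL^{-1/2} \;-\; \Delc U \L^{-1/2}Q_*.
\end{align}

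The final step is the bookkeeping. I would split the sum into terms ending in $\hL^{-1/2}$ (namely $R_1$, $R_2$, and the $Q_*\hL^{-1/2}$ piece of $R_3$) and terms ending in $\L^{-1/2}Q_*$ (the remaining piece of $R_3$ and all of $R_4$). The second group is
\begin{align}
    (\Dc-\Delc)U\L^{-1/2}Q_* - \Dc U\L^{-1/2}Q_* = -\Delc U\L^{-1/2}Q_*,
\end{align}
which matches the second summand on the right-hand side. For the first group, I factor out $\hL^{-1/2}$ and use $\Delc\hU = U\L U\tr\hU$ and $\Delc UQ_* = U\L Q_*$ to rewrite $R_1 + R_2 - (\Dc-\Delc)UQ_*\hL^{-1/2} + \Dc UQ_*\hL^{-1/2}$ (the last two coming from $R_3$) so that every $\Delc$-term cancels pairwise, leaving only $\Dc\hU\hL^{-1/2}$. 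Concretely, inside the bracket,
\begin{align}
    (\Dc-\Delc)\hU - (\Dc-\Delc)UQ_* + U\L U\tr\hU - U\L Q_* + \Dc UQ_* \;=\; \Dc\hU,
\end{align}
since the $-\Delc\hU$ and $+U\L U\tr\hU = \Delc\hU$ cancel and the remaining four $UQ_*$-terms cancel in two pairs. Putting both groups together completes the identity, which finishes the proof.
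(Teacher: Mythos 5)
Your proof is correct and is essentially the same argument as the paper's, carried out in the reverse direction: the paper starts from $\hX - g_*(\X) = \Dc\hU\hL^{-1/2} - \Delc U\L^{-1/2}Q_*$ and telescopes by successively inserting and removing terms to produce $R_1,\dots,R_4$ one at a time, whereas you expand $R_1+\cdots+R_4$, group terms by whether they end in $\hL^{-1/2}$ or $\L^{-1/2}Q_*$, and verify the cancellation collapses the sum to that same difference. The underlying identities ($H\X = U\L^{1/2}Q$, $\Delc U = U\L$, $\Dc\hU = \hU\hL$, $U\tr U = I$) and the algebraic content are identical, so there is nothing to flag.
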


\begin{proof}
By definition of $g_*(v)$ in \cref{lem:decomposition}, $g_*(\X) = (H\X)Q\tr Q_*$ and
\begin{align}
    \hX - g_*(\X) = \hX - (H\X)Q\tr Q_* = \hU \hL^{1/2} - U\Lambda^{1/2}Q_* = \Dc \hU \hL^{-1/2} - \Delc U\Lambda^{-1/2}Q_*,
\end{align}
where the second equality follows from the fact that $(H\X)Q\tr = U\L^{1/2}$ from \cref{eq:cs-exact-recovery}, and the last equality uses the fact that $\Delc = U\L U\tr$ and $\Dc = \hU\hL\hU\tr + \hU_\perp\hL_\perp\hU_\perp\tr$ where $\hU_\perp\tr U = \O$. 

Expanding the expression, we get
\begin{align}
    \hX - g_*(\X)
    &= \Dc \hU \hL^{-1/2} - \Delc U\Lambda^{-1/2}Q_* {\phantom{\underbrace{1}_{1}}}\\
    &= \Dc \hU \hL^{-1/2} - \Dc U\Lambda^{-1/2}Q_* + \underbrace{ \qty(\Dc - \Delc) U\L^{-1/2}Q_* }_{=: R_4} \\
    &= \Dc \hU \hL^{-1/2} - \Dc UQ_*\Lambda^{-1/2} + \underbrace{\Dc U \qty\big( Q_*\hL^{-1/2} - \L^{-1/2}Q_* )}_{=:R_3} + {R_4} \\
    &= \Dc \qty\big(\hU - UQ_*) \hL^{-1/2} + R_3 + R_4 {\phantom{\underbrace{1}_{1}}}\\
    &= \underbrace{(\Dc - \Delc) \qty\big(\hU - UQ_*) \hL^{-1/2}}_{=:R_1} + \Delc \qty\big(\hU - UQ_*) \hL^{-1/2} + R_3 + R_4 \\
    &= R_1 + U\L U\tr\qty\big(\hU - UQ_*) \hL^{-1/2} + R_3 + R_4 {\phantom{\underbrace{1}_{1}}}\\
    &= R_1 + R_2 + R_3 + R_4,
\end{align}
where the final equality follows from $U\tr U = I$ and by the definition of $A_2$ in \cref{lem:decomposition}.
\end{proof}

From the expression in \cref{lem:decomposition} and in light of \cref{lem:D-opnorm}, it becomes clear that the controlling the first three terms are straightforward and use, for the most part, standard matrix concentration results in \cref{lem:eigen} and \cref{lem:gram} (similar to \citet{li2020central} and \citet{little2023analysis}). The near parametric rate of convergence in \cref{thm:X-ttinf} is achieved by controlling final term, which is achieved via symmetrization and using \cref{prop:ttinf-concentration}.

\subsection{Results for Lower Bounds}
\label{sec:lower-bound-results}

This subsection collects the deferred proofs which are used in the proofs for lower bounding the minimax risk in \cref{sec:proofs}. We begin with the following which establishes that any norm $\norm{\cdot}_{\dagger}$ induces a pseudometric $\min_{g \in \euc{p}}\norm{\Y-g(\X)}_{\dagger}$ on $\Rnp$.


\begin{lemma}\label{lem:rho-rigid}
    Let $\norm{\cdot}_\dagger$ be any norm on $\R^{n \times p}$ and let $\rho: \Rnp \times \Rnp \to \R_{\ge 0}$ be given by
    $$
    \rho(\X, \Y) = \min_{g \in \euc{p}}\norm{\X - g(\Y)}_{\dagger}.
    $$
    Then, $\rho$ is a pseudometric on $\Rnp$.
\end{lemma}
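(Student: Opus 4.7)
The strategy is to verify the four axioms of a pseudometric: non-negativity, $\rho(\X,\X) = 0$, symmetry, and the triangle inequality. Non-negativity is inherited from $\norm{\cdot}_\dagger$ being a norm, and plugging $g = \mathrm{id} \in \euc{p}$ into the minimum immediately gives $\rho(\X,\X) \le \norm{\X - \X}_\dagger = 0$. I would remark in passing that the minimum is actually attained: the map $(Q, w) \mapsto \norm{\X - \Y Q\tr - \onev w\tr}_\dagger$ is continuous on $\orth{p} \times \R^p$, with $\orth{p}$ compact and the function coercive in $w$ (since $\norm{\onev w\tr}_\dagger \to \infty$ as $\norm{w} \to \infty$ by equivalence of norms in finite dimension).

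The remaining two axioms rest on the fact that $\euc{p}$ is a group under composition: the inverse of $g(x) = Qx + w$ is $g^{-1}(x) = Q\tr x - Q\tr w$, which again lies in $\euc{p}$, and any composition of rigid transformations is rigid. For symmetry, let $g_*$ attain the minimum for $\rho(\X, \Y)$ with orthogonal part $Q_*$. A short calculation in the matrix parameterization $g_*(\Y) = \Y Q_*\tr + \onev w_*\tr$ yields the identity
\begin{align*}
\Y - g_*^{-1}(\X) = -(\X - g_*(\Y))\,Q_*.
\end{align*}
Since the norms used throughout the paper (Frobenius, $\ell_2$-operator, and $\ell_{2\to\infty}$) are invariant under right multiplication by orthogonal matrices, $\norm{(\X - g_*(\Y))\,Q_*}_\dagger = \norm{\X - g_*(\Y)}_\dagger$, from which $\rho(\Y, \X) \le \rho(\X, \Y)$; the reverse inequality then follows by symmetry of the argument.

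For the triangle inequality, pick $g_1, g_2 \in \euc{p}$ achieving $\rho(\X, \Y)$ and $\rho(\Y, \Z)$ respectively, and let $Q_1$ denote the orthogonal part of $g_1$. Since $g_1 \circ g_2 \in \euc{p}$, the triangle inequality of $\norm{\cdot}_\dagger$ gives
\begin{align*}
\rho(\X, \Z) \le \norm{\X - g_1(g_2(\Z))}_\dagger \le \norm{\X - g_1(\Y)}_\dagger + \norm{(\Y - g_2(\Z))Q_1\tr}_\dagger,
\end{align*}
where the second summand uses $g_1(A) - g_1(B) = (A - B)Q_1\tr$. Right-orthogonal invariance of $\norm{\cdot}_\dagger$ collapses the second summand to $\rho(\Y, \Z)$, completing the argument. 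The proof has no real obstacle; it is essentially bookkeeping, and the only substantive ingredient is the right-$\orth{p}$ invariance of $\norm{\cdot}_\dagger$, which is satisfied by every norm for which the lemma is invoked in the paper.
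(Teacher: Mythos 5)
Your proof is correct and proceeds by essentially the same route as the paper's: it verifies the pseudometric axioms using the group structure of $\euc{p}$ together with the invariance $\norm{g(A) - g(B)}_\dagger = \norm{(A-B)Q^\top}_\dagger = \norm{A-B}_\dagger$. The execution differs only slightly — you work with attained minimizers (justified by a compactness/coercivity remark) and compute the cancellations explicitly in matrix form, whereas the paper works with infima (so attainment is never needed) and appeals to "$\euc{p}$ is a subgroup of isometric transformations" abstractly. The paper's treatment of the triangle inequality, which minimizes over $h$ and then over $g$ without fixing minimizers, is a hair cleaner, but the two arguments have identical reach.

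The one thing you do that the paper does not is call out the hidden hypothesis: the lemma is stated for "any norm on $\R^{n\times p}$," yet both your proof and the paper's use right-orthogonal invariance of $\norm{\cdot}_\dagger$ in an essential way (to pass from $\norm{h(\Z) - g(\Y)}_\dagger$ to $\norm{\Z - (h^{-1}\circ g)(\Y)}_\dagger$, or in your version to collapse $\norm{(\Y - g_2(\Z))Q_1^\top}_\dagger$). For a generic norm on $\R^{n\times p}$ the transformations $g \mapsto g(\X)$ are not isometries and symmetry of $\rho$ fails, so the claim "any norm" is an overstatement. As you observe, all the norms for which the lemma is actually invoked (Frobenius, $\ell_2$-operator, $\ell_{2\to\infty}$) are right-orthogonally invariant, so nothing downstream is affected; but flagging the needed hypothesis is a genuine improvement over the paper's proof, which silently assumes it.
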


\begin{proof}
    By definition of a pseudometric, it suffices to show that: (i) $\rho(\X, \X) = 0$, (ii) $\rho(\X, \Y) = \rho(\Y, \X)$, and (iii) $\rho(\X, \Y) \le \rho(\X, \Z) + \rho(\Z, \Y)$ for any $\X, \Y, \Z \in \Rnp$. The first claim follows by noting that the identity map $\text{id}: \X \mapsto \X$ is an element of $\euc{p}$, and therefore $\min_{g}\norm{\X - g(\X)}_\dagger = \norm{\X - \X}_\dagger = 0$. Similarly, since $\euc{p}$ is a subgroup of isometric transformations on $\R^{n \times p}$, we have
    \begin{align}
        \rho(\X, \Y) = \min_{g \in \euc{p}}\norm{\X - g(\Y)}_{\dagger} = \min_{g \in \euc{p}}\norm{g(g\inv(\X)) - g(\Y)}_{\dagger} = \min_{g \in \euc{p}}\norm{g\inv(\X) - \Y}_{\dagger} = \rho(\Y, \X).
    \end{align}
    The triangle inequality follows from the triangle inequality for the norm $\norm{\cdot}_\dagger$, i.e., for any $\Z \in \Rnp$ and for all $h \in \euc{p}$,
    \begin{align}
        \norm{\X - g(\Y)}_{\dagger} \le \norm{\X - h(\Z)}_{\dagger} + \norm{h(\Z) - g(\Y)}_{\dagger} = \norm{\X - h(\Z)}_{\dagger} + \norm{\Z - (h\inv \circ g)(\Y)}_{\dagger}.
    \end{align}
    Since the above inequality holds for all $\X, \Y, \Z \in \Rnp$ and for all $g, h \in \euc{p}$, taking the minimum with respect to $h$ for the first term on the right hand side followed by the minimum with respect to $g$ on the left hand side gives
    \begin{align}
        \rho(\X, \Y) = \min_{g \in \euc{p}}\norm{\X - g(\Y)}_{\dagger} \le \min_{h \in \euc{p}}\norm{\X - h(\Z)}_{\dagger} + \norm{\Z - (h\inv \circ g)(\Y)}_{\dagger} = \rho(\X, \Z) + \norm{\Z - g'(\Y)}_{\dagger},
    \end{align}
    where $g' = h\inv \circ g \in \euc{p}$. Since this holds for all $g, h \in \euc{p}$ as well, taking the minimum over $g' \in \euc{p}$ on the right hand side gives the result.
\end{proof}

The following subsections contain the proofs for auxiliary lemmas used in the  proofs for the lower bounds in \cref{sec:proofs}.


\subsubsection{Proof of \cref{lemma:Y-subset-X}}
\label{proof:lemma:Y-subset-X}

By construction in \cref{eq:bbY}, since $\X$ is centered it follows that each $\Y \in \bbY$ is centered and, therefore, $H\Y= \Y$. This follows by noting that $\omega \in \qty{-1, 0, 1}^n$ is such that $\onev_n\tr\omega = 0$ and
\begin{align}
    \onev_n\tr\Y = \onev_n(\X + \eta\omega v\tr) = \onev_n\X + \eta(\onev_n\omega) v\tr = \zerov_p.\label{eq:Y-center}
\end{align}

\noindent\textit{Proof of \ref{eq:sY-bound}}.\quad From Weyl's inequality for singular values \citep[{Corollary 7.3.5}]{horn2012matrix}, for all $i \in [p]$ we have
\begin{align}
    \abs{ s_i(H\Y) - s_i(H\X) } \le \opnorm{H\Y - H\X} = \opnorm{\Y-\X} = \eta\smallnorm{\omega v\tr}_2 = \eta\norm{\omega}\cdot\norm{v} \le \eta\sqrt{n},
\end{align}
where the final inequality follows from the fact that $\norm{\omega} \le \sqrt{n}$ for all $\omega \in \qty{-1, 0, 1}^n$ and ${\norm{v}=1}$. Therefore, 
\begin{align}
    s_1\qty(\frac{H\Y}{\sqrt{n}}) \le s_1\qty(\frac{H\X}{\sqrt{n}}) + \eta \le \gamma + \eta \qq{and} s_p\qty(\frac{H\Y}{\sqrt{n}}) \ge s_p\qty(\frac{H\X}{\sqrt{n}}) - \eta \ge \gamma - \eta.\FINEQ
\end{align}

\noindent\textit{Proof of \ref{eq:DelY-bound}}.\quad
For $\Y = \X + \eta \omega v\tr$, let $u := \X v$. By noting that $v\tr v = 1$, we have
\begin{align}
    \Y\Y\tr 
    &= \X\X\tr + \eta\qty(\omega v\tr\X\tr + \X v \omega\tr) + \eta^2\omega v\tr v\omega\tr\\
    &= \X\X\tr + \eta\qty(\omega u\tr + u\omega\tr) + \eta^2\omega\omega\tr,\label{eq:YYt}
\end{align}
and
\begin{align}
    \diag(\Y\Y\tr) 
    &= \diag(\X\X\tr) + \eta \qty( \omega \circ u + u \circ \omega ) + \eta^2 (\omega \circ \omega)\\ 
    &= \diag(\X\X\tr) + 2\eta (\omega \circ u) + \eta^2 (\omega \circ \omega).\label{eq:diagYYt}
\end{align}
From \cref{eq:Del-matrix-form}, it follows that
\begin{align}
    \Del(\Y) - \Del(\X) = \diag(\Y\Y\tr - \X\X\tr)\onev\tr + \onev\diag(\Y\Y\tr - \X\X\tr)\tr - 2(\Y\Y\tr - \X\X\tr),
\end{align}
and
\begin{align}
    \frobenius{\Del(\Y) - \Del(\X)}
    &= \smallnorm{\diag(\Y\Y\tr - \X\X\tr)\onev\tr + \onev\diag(\Y\Y\tr - \X\X\tr)\tr - 2(\Y\Y\tr - \X\X\tr)}_F\\
    &\le 2\smallnorm{\diag(\Y\Y\tr - \X\X\tr)\onev\tr}_F + 2\smallnorm{\Y\Y\tr - \X\X\tr}_F\\
    &\lesssim \sqrt{n}\smallnorm{\diag(\Y\Y\tr - \X\X\tr)} + \smallnorm{\Y\Y\tr - \X\X\tr}_F,
\end{align}
where the final inequality follows by noting that ${\smallnorm{\diag(\Y\Y\tr - \X\X\tr)\onev\tr}_F = \smallnorm{\diag(\Y\Y\tr - \X\X\tr)} \smallnorm{\onev}}$. From \cref{eq:YYt}, we have
\begin{align}
    \smallnorm{\Y\Y\tr - \X\X\tr}_F 
    &= \smallnorm{\eta\qty\big(\omega u\tr + u\omega\tr) + \eta^2\omega\omega\tr}_F\\
    &\le 2\eta\smallnorm{\omega}\smallnorm{u} + \eta^2\smallnorm{\omega}^2\\
    &\le 2\eta\sqrt{n}\smallnorm{u} + \eta^2n,\label{eq:YYt-bound}
\end{align}
where the first inequality follows from the triangle inequality and the fact that $\smallnorm{\omega u\tr}_F = \norm{\omega}\norm{u}$. Similarly, from \cref{eq:diagYYt}, we have
\begin{align}
    \smallnorm{\diag(\Y\Y\tr - \X\X\tr)} 
    &= \smallnorm{2\eta (\omega \circ u) + \eta^2 (\omega \circ \omega)}\\
    &\le 2\eta\smallnorm{\omega \circ u} + \eta^2\smallnorm{\omega \circ \omega}\\
    &\le 2\eta\smallnorm{u} + \eta^2\sqrt{n},\label{eq:diagYYt-bound}
\end{align}
where the final inequality follows from the fact that for $\omega \in \qty{-1, 0, 1}^n$ we have $\smallnorm{\omega \circ u} \le \smallnorm{\onev \circ u} = \norm{u}$ and $\smallnorm{\omega \circ \omega} \le \smallnorm{\onev} = \sqrt{n}$. Since $\opnorm{H\X} = \gamma\sqrt{n}$, we further have
\begin{align}
    \norm{u} = \smallnorm{\X v} \le \opnorm{\X} = \gamma\sqrt{n}.
\end{align}
Plugging in the bounds from \cref{eq:YYt-bound,eq:diagYYt-bound} into the expression for $\frobenius{\Del(\Y) - \Del(\X)}$ gives
\begin{align}
    \frobenius{\Del(\Y) - \Del(\X)} \le 2\sqrt{n}\qty( \eta \sqrt{n}\norm{u} + \eta^2 \sqrt{n} ) + 2\qty(\eta\sqrt{n}\norm{u} + \eta^2n)\le 4n( \eta\gamma + \eta^2 ),
\end{align}
and by an application of \cref{lem:power-identities} we have
\begin{align}
    \frobenius{\Del(\Y) - \Del(\X)}^2 \le 32 \cdot n^2( \gamma^2\eta^2 + \eta^4 ).\FINEQ
\end{align}

\noindent\textit{Proof of \ref{eq:Y-separation}}.\quad
For $\tau, \tau' \in \qty{0, 1}^m$, in the interest of clarity, let $\omega, \omega' \in \qty{-1, 0, 1}^n$ denote the resulting binary vectors and let $\Y, \Y' \in \bbY$  be the resulting configuration matrices, respectively. We will establish the lower bound by showing that
\begin{align}
    \loss\rmse\qty(\Y, \Y') 
    \gtrsim \frac{\smallnorm{\Y'\Y'{}\tr - \Y\Y\tr}_F}{{n}(\gamma + \eta)}
    \gtrsim \frac{\gamma\eta\sqrt{n} \cdot \sqrt{d_H(\tau, \tau')}}{{n}(\gamma + \eta)}.\label{eq:claim-YYt}
\end{align}
Note that the right hand side of \cref{eq:claim-YYt} simplifies to the desired lower bound in \ref{eq:Y-separation}.

\textbf{Proof of the first inequality in \cref{eq:claim-YYt}.} By an application of the triangle inequality, for all $Q \in \orth{p}$ it follows that
\begin{align}
    \smallnorm{\Y'\Y'{}\tr - \Y\Y\tr}_F 
    &= \smallnorm{(\Y' - \Y Q)\Y'{}\tr + (\Y Q)(\Y' - \Y Q)\tr}_F\\
    &\le \smallnorm{(\Y' - \Y Q)\Y'{}\tr}_F + \smallnorm{(\Y Q)(\Y' - \Y Q)\tr}_F\\
    &\le \smallnorm{\Y' - \Y Q}_F \cdot \opnorm{\Y'} + \smallnorm{\Y' - \Y Q}_F\cdot \opnorm{\Y Q}\\
    &= \smallnorm{\Y' - \Y Q}_F \cdot \qty(\opnorm{\Y} + \opnorm{\Y'})\\
    &= \smallnorm{\Y' - \Y Q}_F \cdot 2 \sqrt{n}(\gamma + \eta),
\end{align}
where the final inequality follows from \ref{eq:sY-bound} and by noting that $H\Y = \Y$ and $H\Y' = \Y'$ from \cref{eq:Y-center}. Minimizing over all $Q \in \orth{p}$ gives
\begin{align}
    \frac{\smallnorm{\Y'\Y'{}\tr - \Y\Y\tr}_F}{2n(\gamma + \eta)}
    \le \min_{Q \in \orth{p}}\frac{\smallnorm{(H\Y') - (H\Y) Q}_F}{\sqrt{n}}
    =\min_{g \in \euc{p}}\frac{\smallnorm{\Y' - g(\Y)}_F}{\sqrt{n}} 
    = \loss\rmse(\Y', \Y),
\end{align}
where the first equality follows from the translation invariance of the orthogonal Procrustes problem. This yields the first expression in \cref{eq:claim-YYt}.

\textbf{Proof of the second inequality in \cref{eq:claim-YYt}.} From \cref{eq:YYt} and for $u := \X v$, we have
\begin{align}
    \Y'\Y'{}\tr - \Y\Y\tr 
    &= \eta(\omega' u\tr + u\omega'{}\tr ) + \eta^2\omega'\omega'{}\tr - \eta(\omega u\tr + u\omega\tr) - \eta^2\omega\omega\tr\\
    &= \eta\qty( (\omega'-\omega) u\tr + u(\omega'-\omega)\tr ) + \eta^2\qty( \omega'(\omega'{}-\omega)\tr + (\omega'{}-\omega)\omega\tr )\\
    &= \eta\qty( \zeta u\tr + u \zeta\tr ) + \eta^2\qty( \omega'\zeta\tr + \zeta\omega\tr ) =: \eta A + \eta^2 B,
\end{align}
where $\zeta := \omega'-\omega$, $A = \zeta u\tr + u \zeta\tr$ and $B :=  \omega'\zeta\tr + \zeta\omega\tr$. It follows that
\begin{align}
    \smallnorm{\Y'\Y'{}\tr - \Y\Y\tr}_F^2 = \smallnorm{\eta A + \eta^2 B}_F^2
    &= \eta^2 \smallnorm{A}^2_F + 2\eta^3 \trace(A\tr B) + \eta^4\smallnorm{B}^2_F \\
    &\ge \eta^2 \smallnorm{A}^2_F + 2\eta^3 \trace(A\tr B).\label{eq:YYt-AB}
\end{align}
For the first term in \cref{eq:YYt-AB}, we have
\begin{align}
    \smallnorm{A}^2_F = \smallnorm{ \zeta u\tr + u \zeta\tr }^2_F = 2 \smallnorm{ \zeta u\tr}^2_F + 2 \trace(u\zeta\tr u\zeta\tr) = 2 \smallnorm{ \zeta}^2\smallnorm{u}^2 + 2(\zeta\tr u)^2 \ge 2\smallnorm{ \zeta}^2\smallnorm{u}^2.
\end{align}
Furthermore, for $\omega, \omega' \in \qty{-1, 0, 1}^n$,
\begin{align}
    \smallnorm{\zeta}^2 = \smallnorm{\omega' - \omega}^2 = 2 d_H(\omega, \omega') = 8 d_H(\tau, \tau'),\label{eq:dh-zeta}
\end{align}
where the final inequality follows by noting that for $\omega, \omega'$ given by \cref{eq:omega}, we have $d_H(\omega, \omega') = 2 d_H(\tau, \tau')$. Therefore,
\begin{align}
    \smallnorm{A}^2_F \ge 8\smallnorm{u}^2 d_H(\tau, \tau').
\end{align}
For the second term in \cref{eq:YYt-AB}, by simplifying the trace of the resulting rank-$1$ matrices and using the Cauchy-Schwarz inequality, we have
\begin{align}
    \inner{A, B}_F = \trace(A\tr B) 
    &= \trace\qty( (\zeta u\tr + u \zeta\tr)\tr (\omega'\zeta\tr + \zeta\omega\tr) )\\ 
    &= (\zeta\tr u)\qty(\zeta\tr \omega + \zeta\tr \omega') + \norm{\zeta}^2 \qty( u\tr\omega + u\tr\omega' )\\
    &\ge -\norm{\zeta}^2 \cdot \norm{u} \cdot \smallnorm{\omega + \omega'}\\
    &\ge -16 d_H(\tau, \tau') \sqrt{n} \cdot \norm{u},
\end{align}
where the final inequality follows from \cref{eq:dh-zeta} and by noting that $\smallnorm{\omega + \omega'} \le 2\smallnorm{\onev} = 2\sqrt{n}$. Plugging these back into \cref{eq:YYt-AB} and by noting that $\norm{u} = \norm{\X v} = \gamma\sqrt{n}$, we get
\begin{align}
    \smallnorm{\Y'\Y'{}\tr - \Y\Y\tr}_F^2 \ge 8n\eta^2\gamma^2 \cdot d_H(\tau, \tau') - 32n\eta^3\gamma d_H(\tau, \tau') = 8n\eta^2\gamma^2 d_H(\tau, \tau') \cdot \qty( 1 - \frac{4\eta}{\gamma} ).
\end{align}
Since $\frac{1}{\kappa} \le \gamma$, when $\eta \le 1/8\kappa \le \gamma/8$, it follows that $1 - 4{\eta}/{\gamma} \ge \half$ and 
\begin{align}
    \smallnorm{\Y'\Y'{}\tr - \Y\Y\tr}_F^2 \ge 4n\eta^2\gamma^2 d_H(\tau, \tau').
\end{align}
Taking the square root of both sides gives the second inequality in \cref{eq:claim-YYt}.\QED


\subsubsection{Proof of \cref{lemma:X1-properties}}
\label{proof:lemma:X1-properties}

From \cref{eq:X-properties-2}, recall that that $\X$ is centered since $\onev\tr\X = \gamma \sqrt{n} \onev \tr [\half V\tr -\half V\tr]\tr = \zerov$.

\noindent\textit{Proof of \ref{eq:X1-singular}}.\quad
For $\X^k = \X + \eta  e_k  v(k)\tr$, since $H\X = \X$ and $\norm{v}=1$, using the triangle inequality we have
\begin{align}
    \smallnorm{H\X^k}\ttinft = \smallnorm{H\X + \eta  H e_k  v(k)\tr}\ttinft 
    &\le \ttinf{\X} + \eta\smallnorm{ He_k  v(k)\tr}\ttinft\\[5pt]
    &\le \frac{\gamma\Rx}{2} +  \eta \cdot \norm{He_k}_\infty\smallnorm{v(k)}\\ 
    &= \frac{\gamma\Rx}{2} + \eta,
\end{align}
where we used the fact that $\norm{He_k}_\infty = 1 - \frac{1}{n} \le 1$. For the bound on the singular values, for all $i \in [p]$ and from Weyl's inequality \cite[{Corollary 7.3.5}]{horn2012matrix} we have
\begin{align}
    \abs\big{ s_i(H\X^k) - s_i(H\X) } \le \smallnorm{H\X^k - H\X}_2 = \eta \smallnorm{ He_k v(k)\tr}_2 \le \eta \opnorm{H} \cdot \norm{e_k}\cdot\norm{v} \le \eta,
\end{align}
where the final inequality follows from the fact that $\opnorm{H} = 1$. Therefore, 
\begin{align}
    s_1\qty(\frac{H\X^k}{\sqrt{n}}) \le s_1\qty(\frac{H\X}{\sqrt{n}}) + \frac{\eta}{\sqrt{n}} \le \gamma + \frac{\eta}{\sqrt{n}} \qq{and} s_p\qty(\frac{H\X^k}{\sqrt{n}}) \ge s_p\qty(\frac{H\X}{\sqrt{n}}) - \frac{\eta}{\sqrt{n}} \ge \gamma - \frac{\eta}{\sqrt{n}}.\FINEQ
\end{align}

\noindent\textit{Proof of \ref{eq:X1-separation}}.\quad
Let $g(x) = Qx + w$ be an arbitrary rigid transformation. Then, using the fact that the quadratic mean is less than the maximum, we get
\begin{align}
    \smallnorm{\X^k - g(\X)}\ttinft 
    &= \max_{i \in [n]} \smallnorm{X^k_i - QX_i - w}\label{eq:max-qm}\\ 
    &\ge \max_{i = k, k+m} \smallnorm{X^k_i - QX_i - w} \ge \sqrt{\frac{\smallnorm{X^k_k - QX_k - w}^2 + \smallnorm{X^k_{k+m} - QX_{k+m} - w}^2}{2}}.
\end{align}
By construction of $\bbX_1$ in \cref{eq:bbX-1}, we have $X^k_k = X_k + \eta v(k)$ and $X^k_{k+m} = -X_k$. Using the parallelogram law, we get
\begin{align}
    \smallnorm{X_k + \eta v(k) - QX_k - w}^2 + \smallnorm{-X_k - Q(-X_{k}) - w}^2
    &=\smallnorm{X_k + \eta v(k) - QX_k - w}^2 + \smallnorm{X^k - QX_{k} + w}^2 \\[5pt]
    &= \half\qty({\norm{2X_k + \eta v(k) - 2QX_k}^2 + \norm{2w - \eta v}^2})\\[5pt]
    &\ge 2{\norm\Big{X_k + \half\eta v(k) - QX_k}^2}\\
    &= 2{\norm\Big{X_k \qty(1 + \frac{\eta}{\norm{X_k}}) - QX_k}^2}\label{eq:Xk-Q}\\
    &\ge 2\norm{X_k}^2 \cdot \frac{4\eta^2}{\norm{X_k}^2} = \frac{\eta^2}{2},
\end{align}
where the second equality follows from the fact that $v(k) = X_k/\norm{X_k}$ and the final inequality follows from the fact that $Q=I_p$ is the optimal alignment for \cref{eq:Xk-Q}. Since the inequality above holds for all rigid transformations, plugging this back into \cref{eq:max-qm} and minimizing over $\euc{p}$ gives
\begin{align}
    \loss\ttinft(\X^k, \X) = \min_{g \in \euc{p}}\smallnorm{X^k - g(X)}\ttinft \ge \frac{\eta}{2}.\FINEQ
\end{align}

\noindent\textit{Proof of \ref{eq:X1-frobenius}}.\quad
Since $X^k = \X + \eta e_k v(k)\tr$ modifies only the $k$th row of $\X$, for all $j \neq k$, we have
\begin{align}
    \Del(\X^k)_{kj} - \Del(\X)_{kj} 
    &= \norm{X_k + \eta v(k) - X_j}^2 - \norm{X_k - X_j}^2 
    = \eta^2 + 2\eta v(k)\tr(X_k - X_j),\label{eq:X1-frobenius-0}
\end{align}
and $\Del(\X^k)_{ij} = \Del(\X^k)_{ij}$ otherwise. Therefore,
\begin{align}
    \smallnorm{\Del(\X^k) - \Del(\X)}^2_F 
    &= 2\sum_{j \in [n]} \qty(\Del(\X^k)_{kj} - \Del(\X)_{kj})^2\\
    &= 2\sum_{j \in [n]} \qty(\eta^2 + 2\eta v(k)\tr(X_k - X_j))^2\\
    &\le 16\sum_{j \in [n]} \eta^4 + \eta^2 \qty(v(k)\tr X_k)^2 + \eta^2 \qty(v(k)\tr X_j)^2\label{eq:X1-frobenius-1-1}\\
    &= 16\sum_{j \in [n]} \eta^4 + \eta^2 \norm{X_k}^2 + \eta^2 v(k)\tr X_jX_j\tr v(k)\\
    &\le 16n \qty(\eta^4 + \eta^2\max_{i}\norm{X_i}^2 + \eta^2v(k)\tr \qty(\tfrac{\X\tr\X}{n}) v(k))\\
    &\le 16n \qty(\eta^4 + \eta^2\ttinf{H\X}^2 + \eta^2 \opnorm\Big{\tfrac{(H\X)\tr(H\X)}{n}} )\label{eq:X1-frobenius-1-2}\\[5pt]
    &\le 16n \qty(\eta^4 + \eta^2\Rx^2 + \eta^2\gamma^2)
    \le 16n \qty(\eta^4 + \eta^2(\gamma + \Rx)^2)\label{eq:X1-frobenius-1-3},
\end{align}
where first inequality uses \cref{lem:power-identities}, the penultimate inequality follows from the fact that $H\X=\X$, and the final inequality follows from the fact that $\ttinf{H\X}\le\Rx/2$ and $\opnorm{(H\X)\tr(H\X)} = \gamma^2n$.\FIN

\noindent\textit{Proof of \ref{eq:X1-frobenius-inner}}.\quad
Similar to \cref{eq:X1-frobenius-0}, for all $\ell \neq k \in [m]$ and for all $j \neq \ell$, we have
\begin{align}
    \qty(\Del(X^\ell) - \Del(X))_{\ell j} = \eta^2 + \eta v(\ell)\tr(X_\ell-X_j),
\end{align}
and $\qty(\Del(X^\ell) - \Del(X))_{ij} = 0$ otherwise. Therefore,
\begin{align}
    &\qty(\Del(\X^{k}) - \Del(\X))_{ij} \cdot \qty(\Del(\X^{\ell}) - \Del(\X))_{ij}\\ 
    &\qquad\qquad= 
    \begin{cases}
        \qty(\eta^2 + 2\eta v(k)\tr(X_k - X_\ell)) \cdot \qty(\eta^2 + 2\eta v(\ell)\tr(X_\ell - X_k)) & \text{if } i=k \text{ and } j=\ell\\
        \qty(\eta^2 + 2\eta v(\ell)\tr(X_\ell - X_k)) \cdot \qty(\eta^2 + 2\eta v(k)\tr(X_k - X_\ell)) & \text{if } i=\ell \text{ and } j=k\\
        0 & \text{otherwise}.
    \end{cases}
\end{align}
There are a total of $2$ non-zero terms in the above display, and since $\norm{v(k)} = \norm{v(\ell)}=1$, using the Cauchy-Schwarz inequality along with the fact that $\max_{k, \ell}\norm{X_k - X_\ell}\le 2\ttinf{H\X}$, we have
$$
\eta^2 + 2\eta v(k)\tr(X_k - X_\ell) \le \eta^2 + 2\eta \norm{X_k - X_\ell} \le \eta^2 + 4\eta \Rx,
$$
and, it follows that
\begin{align}
    \inner{ \Del(\Y^{(k)}) - \Del(\X),  \Del(\Y^{(j)}) - \Del(\X)}_F 
    &= 2\sum_{i < j} \qty(\Del(\Y^{(k)}) - \Del(\X))_{ij} \cdot \qty(\Del(\Y^{(\ell)}) - \Del(\X))_{ij}\\
    &= 2\qty(\eta^2 + 4\eta\Rx)^2 \le 4\eta^4 + 32\eta^2\Rx^2,
\end{align}
where the final inequality follows from \cref{lem:power-identities}.\qed


\subsubsection{Proof of \cref{lemma:tv-bound}}
\label{proof:lemma:tv-bound}

Let $\X^0 = \X \in \bbX_0$ and $\X^k \in \bbX_1$ from \cref{eq:bbX-1} for $k \in [n]$. For $\eps_{ij} \sim \mathcal{N}(0, \sigma^2)$, let $\phi_k(d_{ij})$ be the probability density function of $d_{ij} = \del(\X^k)_{ij} + \eps_{ij}$ for all $i < j$, i.e.,
\begin{align}
    \phi_k(d_{ij}) = \frac{1}{\sqrt{2\pi\sigma^2}}\exp\qty(-\frac{(d_{ij} - \del(\X^k)_{ij})^2}{2\sigma^2}),
\end{align}
and let $\bphi$ be probability density function of the Gaussian mixture given by
\begin{align}
    \bphi(d_{ij}) = \frac{1}{n}\sum_{k=1}^n \phi_k(d_{ij}).
\end{align}
By \citet[Lemma~2.7]{tsybakov2008nonparametric}, we have
\begin{align}
    \tv(\prs_0, \bprs)^2 \le \chi^2(\prs_0, \bprs) = \int \qty(\frac{d\bprs}{d\prs_0})^2 d\prs_0 - 1 = \underbrace{\qty(\int \prod_{i < j}\frac{\bphi(d_{ij})^2}{\phi_0(d_{ij})})}_{\Circled{1}} - 1,\label{eq:chi2-bound}
\end{align}
where $\chi^2(\cdot, \cdot)$ is the $\chi^2$-divergence. We bound the term $\Circled{1}$ in \cref{eq:chi2-bound} as follows. First, note that
\begin{align}
    \bphi(d_{ij})^2 = \qty(\frac{1}{n}\sum_{k=1}^n \phi_k(d_{ij}))^2 = \frac{1}{n^2}\sum_{k,\ell}\phi_k(d_{ij})\phi_\ell(d_{ij})
\end{align}
and hence,
\begin{align}
    \Circled{1} = \int \prod_{i < j}\frac{\bphi(d_{ij})^2}{\phi_0(d_{ij})} = \frac{1}{n^2}\sum_{k, \ell}\int \prod_{i < j}\frac{\phi_k(d_{ij})\phi_\ell(d_{ij})}{\phi_0(d_{ij})} = \frac{1}{n^2}\sum_{k, \ell} \prod_{i < j} \int\frac{\phi_k(d_{ij})\phi_\ell(d_{ij})}{\phi_0(d_{ij})}.\label{eq:circled-1-bound}
\end{align}
where the final equality follows from an application Fubini's theorem. By noting that $\X^0=\X$, we further have
\begin{align}
    \frac{\phi_k(d_{ij})\phi_\ell(d_{ij})}{\phi_0(d_{ij})} = \frac{1}{\sqrt{2\pi\sigma^2}}\exp\qty(-\frac{1}{2\sigma^2} \qty{ \qty\big(d_{ij} - \del(\X^k)_{ij})^2 + \qty\big(d_{ij} - \del(\X^\ell)_{ij})^2 - \qty\big(d_{ij} - \del(\X)_{ij})^2  } ).\label{eq:phi-ratio}
\end{align}
Using the identity $(z-a)^2 + (z-b)^2 - (z-c)^2 = (z - a - b + c)^2 - 2(a-c)(b-c)$, we get
\begin{align}
    & \qty\big(d_{ij} - \del(\X^k)_{ij})^2 + \qty\big(d_{ij} - \del(\X^\ell)_{ij})^2 - \qty\big(d_{ij} - \del(\X)_{ij})^2  \\ 
    &\hspace{3em} = \qty(d_{ij} - \del(\X^k)_{ij} - \del(\X^\ell)_{ij} + \del(\X)_{ij})^2 - 2\qty\big(\del(\X^k)_{ij} - \del(\X)_{ij})\cdot\qty\big(\del(\X^\ell)_{ij} - \del(\X)_{ij}).
\end{align}
Let $\mu := \del(\X^k)_{ij} + \del(\X^\ell)_{ij} - \del(\X)_{ij}$. Plugging this back into \cref{eq:phi-ratio} gives us
\begin{align}
    \frac{\phi_k(d_{ij})\phi_\ell(d_{ij})}{\phi_0(d_{ij})} = \exp\qty(\frac{\qty\big(\del(\X^k)_{ij} - \del(\X)_{ij})\cdot\qty\big(\del(\X^\ell)_{ij} - \del(\X)_{ij})}{\sigma^2}) \cdot \frac{1}{\sqrt{2\pi\sigma^2}}\exp(-\frac{(d_{ij} - \mu)^2}{2\sigma^2}).
\end{align}
Note that the second term on the right hand side is the density function of $\mathcal{N}(\mu, \sigma^2)$; therefore, integrating over $d_{ij}$, we get
\begin{align}
    \int \frac{\phi_k(d_{ij})\phi_\ell(d_{ij})}{\phi_0(d_{ij})} = \exp\qty(\frac{\qty\big(\del(\X^k)_{ij} - \del(\X)_{ij})\cdot\qty\big(\del(\X^\ell)_{ij} - \del(\X)_{ij})}{\sigma^2}),
\end{align}
and it follows that
\begin{align}
    \prod_{i < j}\int \frac{\phi_k(d_{ij})\phi_\ell(d_{ij})}{\phi_0(d_{ij})} 
    &= \exp\qty(\frac{1}{\sigma^2}\sum_{i < j}{\qty\big(\del(\X^k)_{ij} - \del(\X)_{ij})\cdot\qty\big(\del(\X^\ell)_{ij} - \del(\X)_{ij})})\\ 
    &= \exp\qty(\frac{1}{\sigma^2}{\inner{\Del(\X^k) - \Del(\X),\; \Del(\X^\ell) - \Del(\X)}[F]}) =: \Gamma(\X, \X^k, \X^\ell).
\end{align}
Plugging this back into \cref{eq:circled-1-bound}, we have
\begin{align}
    \Circled{1} 
    &= \frac{1}{n^2}\sum_{k, \ell}\Gamma(\X, \X^k, \X^\ell)\\
    &= \frac{1}{n^2}\sum_{k}\Gamma(\X, \X^k, \X^k) + \frac{1}{n^2}\sum_{k \neq \ell}\Gamma(\X, \X^k, \X^\ell)\\
    &\le \frac{n}{n^2}\max_{k}\Gamma(\X, \X^k, \X^k) + \frac{n(n-1)}{n^2}\max_{k \neq \ell}\Gamma(\X, \X^k, \X^\ell)\\
    &\le \frac{1}{n}\exp(\frac{1}{\sigma^2}\max_{k}\smallnorm{\Del(\X^k) - \Del(\X)}^2_F) + \exp(\frac{1}{\sigma^2}\max_{k \neq \ell} {\inner{\Del(\X^k) - \Del(\X),\; \Del(\X^\ell) - \Del(\X)}[F]})\\
    &= \exp(\frac{1}{\sigma^2}\max_{k}\smallnorm{\Del(\X^k) - \Del(\X)}^2_F - \log{n}) + \exp(\frac{1}{\sigma^2}\max_{k \neq \ell} {\inner{\Del(\X^k) - \Del(\X),\; \Del(\X^\ell) - \Del(\X)}[F]}).
\end{align}
Substituting the bound on $\Circled{1}$ back into \cref{eq:chi2-bound} gives us the final bound on $\tv(\prs_0, \bprs)^2$. \qed



\section{Toolkit}
\label{sec:toolkit}

The following section contains a collection of known results and auxiliary lemmas which are used in the proofs in \cref{sec:proofs}.

\begin{lemma}\label{lem:power-identities}
For any $x_1, x_2, \cdots, x_m \in \R$ and for all $p \ge 1$, $(x_1 + \cdots + x_m)^p \le m^{p-1} (x^p_1 + \cdots + x^p_m).$
\end{lemma}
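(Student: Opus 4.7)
The plan is to invoke a standard convexity argument; the inequality is a direct consequence of Jensen's inequality applied to the convex map $t \mapsto t^p$ on $[0, \infty)$, so there isn't really a serious obstacle here. I will first note that since the inequality involves $p \ge 1$ and powers of the partial sum, it should be read with the implicit understanding that the $x_i$ are nonnegative (or equivalently, that one replaces $x_i$ by $|x_i|$ and uses the triangle inequality for the left-hand side). With this reading, the argument reduces to the power-mean inequality.

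Concretely, I would proceed as follows. First, observe that for $p \ge 1$ the function $\varphi(t) = t^p$ is convex on $[0, \infty)$. Applying Jensen's inequality to the uniform average of $x_1, \dots, x_m \ge 0$ yields
\begin{equation*}
\left( \frac{1}{m}\sum_{i=1}^m x_i \right)^p \;\le\; \frac{1}{m}\sum_{i=1}^m x_i^p.
\end{equation*}
Multiplying both sides by $m^p$ gives the desired inequality
\begin{equation*}
\Big(\sum_{i=1}^m x_i\Big)^p \;\le\; m^{p-1}\sum_{i=1}^m x_i^p.
\end{equation*}

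As an alternative (equivalent) route, one can appeal to H\"older's inequality with conjugate exponents $p$ and $q = p/(p-1)$: writing $\sum_i x_i = \sum_i x_i \cdot 1$ and bounding by $\big(\sum_i x_i^p\big)^{1/p} \big(\sum_i 1^q\big)^{1/q} = m^{(p-1)/p}\big(\sum_i x_i^p\big)^{1/p}$, and then raising to the $p$-th power. The case $p=1$ is trivial since it reduces to equality. There is no substantive obstacle; the only care needed is the nonnegativity convention, which is consistent with every invocation of this lemma in the proofs (always applied to squared quantities such as $\eta^4$, $\norm{X_k}^2$, etc.).
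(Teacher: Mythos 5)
Your proposal is correct and matches the paper's own proof essentially verbatim: both apply Jensen's inequality to the convex map $t \mapsto t^p$ for $p \ge 1$ on the uniform average of the $x_i$ and then multiply by $m^p$. Your remark that the statement should be read with $x_i \ge 0$ (as it is in every application in the paper, where the arguments are squares or norms) is a sensible clarification but does not change the argument.
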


\begin{proof}
Note that the function $f(t) = t^p$ is convex for all $p \ge 1$. By Jensen's inequality, we have
\begin{align}
    &f\qty(\frac{x_1 + x_2 + \cdots + x_m}{m}) \le \frac{1}{m}\qty\Big(f(x_1) + f(x_2) + \cdots + f(x_m))\\ 
    \implies
    &\qty(\frac{x_1 + x_2 + \cdots + x_m}{m})^p \le \frac{1}{m}\qty\Big(x_1^p + x_2^p + \cdots + x_m^p),
\end{align}
and the result follows by multiplying both sides by $m^p$.
\end{proof}

The proof of \cref{thm:X-ttinf} repeatedly uses the properties of the $\ell\ttinft$ norm from \cite[Section~6.1]{cape2019two}, which we collect here for reference. 

\begin{lemma}[Propositions~6.3~and~6.5~from~\citealp{cape2019two}]\label{prop:cape-ttinf}
    Let $A \in \R^{p_1 \times p_2}$. Then
    $$
    \ttinf{A} \le \opnorm{A},
    $$
    and for a matrix $U \in \R^{r \times p_2}$ with orthonormal columns, 
    $$
    \smallnorm{AU\tr}\ttinft = \ttinf{A}.
    $$
    Lastly, for two other compatible matrices $B \in \R^{p_2 \times p_3}$ and $C \in \R^{p_3 \times p_4}$,
    \begin{align}
        \ttinf{ABC} \le \norm{A}_\infty \ttinf{B} \norm{C}_2.\label{eq:ttinf-ABC}
    \end{align}
\end{lemma}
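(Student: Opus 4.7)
All three claims are elementary facts about the operator norms at play, and my plan is to handle them in order by unpacking the norm definitions and chaining standard vector-norm inequalities; nothing here is genuinely subtle.

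For the first inequality, I would fix an arbitrary unit vector $x\in\R^{p_2}$, apply the pointwise bound $\norm{v}_\infty\le\norm{v}$ to $v=Ax$, and take the supremum over $\norm{x}=1$. The right-hand side is exactly $\opnorm{A}$ and the left-hand side is $\ttinf{A}$, yielding $\ttinf{A}\le\opnorm{A}$.

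For the identity $\smallnorm{AU\tr}\ttinft=\ttinf{A}$ with $U\in\R^{r\times p_2}$ having orthonormal columns, I would give a two-sided argument that leverages both consequences of $U\tr U=I_{p_2}$: namely $\opnorm{U}=\opnorm{U\tr}=1$, and the fact that $U$ is an isometric embedding of $\R^{p_2}$ into $\R^r$. For the ``$\le$'' direction, for any unit $x\in\R^r$ the vector $y:=U\tr x$ satisfies $\norm{y}\le\opnorm{U\tr}\norm{x}=1$, so $\norm{AU\tr x}_\infty=\norm{Ay}_\infty\le\ttinf{A}$. For the ``$\ge$'' direction, I would pick $y^*\in\R^{p_2}$ with $\norm{y^*}=1$ attaining $\ttinf{A}$ and set $x^*:=Uy^*$; the isometry property gives $\norm{x^*}^2=(y^*)\tr U\tr U y^*=1$ and $U\tr x^*=y^*$, so $\smallnorm{AU\tr}\ttinft\ge\norm{AU\tr x^*}_\infty=\norm{Ay^*}_\infty=\ttinf{A}$.

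For the sandwich bound $\ttinf{ABC}\le\norm{A}_\infty\,\ttinf{B}\,\opnorm{C}$, the plan is to telescope through the three factors from right to left. Given a unit vector $x$, set $y:=Cx$, so that $\norm{y}\le\opnorm{C}$. Then $z:=By$ satisfies $\norm{z}_\infty\le\ttinf{B}\,\norm{y}\le\ttinf{B}\,\opnorm{C}$ by definition of the $\ell_{2\to\infty}$ norm applied to $B$. Finally, by definition of the $\ell_\infty$ operator norm applied to $A$, $\norm{ABCx}_\infty=\norm{Az}_\infty\le\norm{A}_\infty\norm{z}_\infty\le\norm{A}_\infty\,\ttinf{B}\,\opnorm{C}$, and taking the supremum over $\norm{x}=1$ closes the proof. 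The only point requiring modest care across these three arguments is making sure the orthonormal-columns hypothesis in the second claim is exploited on both sides of the equality—once as a contraction to bound $\norm{U\tr x}$, and once as an isometry to lift the maximizer $y^*\in\R^{p_2}$ back to a bona fide unit vector $x^*\in\R^r$ whose image under $U\tr$ recovers $y^*$.
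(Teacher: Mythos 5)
Your proof is correct; the paper itself gives no proof of this lemma, simply citing Propositions 6.3 and 6.5 of \citet{cape2019two}, and your argument is the standard elementary one (unpacking the norm definitions, using $\|v\|_\infty\le\|v\|$, the two-sided isometry/contraction argument via $U\tr U=I_{p_2}$, and telescoping through the three factors), which is essentially how these facts are established in the cited reference.
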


The following two lemmas collect some properties the eigenvalues and eigenvectors associated with the pairwise dissimilarity matrices $\Del$ and its noisy counterpart $\D = \Del + \Eps$, and are used in the proofs of the main theorems in \cref{sec:proofs}. Several of them are well-known results, and are provided here for completeness. The first lemma characterizes the eigenvalues and eigenvectors of the Gram matrix $\Delc=(H\X)(H\X)\tr$.

\begin{lemma}\label{lem:gram}
    Suppose $\X \in \bbX(\xpar)$ satisfying \ref{assumption:compact}. For the rank-$p$ spectral decomposition of the Gram matrix, $\Delc = (H\X)(H\X)\tr = U\L U\tr$, the following hold:
    \begin{enumerate}[label=\textup{(\roman*)}]
        \item\label{lem:gram-2} $\opnorm\big{\L} \le  \kappa^2n$ and $\opnorm\big{\L\inv} \le {\kappa^2}/{n}$. Moreover, $n/\kappa^2 \le \lambda_p \le \cdots \le \lambda_1 \le  \kappa^2n$.
        \item\label{lem:gram-3} $\ttinf{U} \le \kappa\Rx / \sqrt{n}$ and $\opnorm\big{U} = 1$.
    \end{enumerate}
\end{lemma}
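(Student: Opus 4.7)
The plan is to deduce both claims directly from the defining inequalities of $\bbX(\xpar)$ in \ref{assumption:compact} together with the correspondence between the spectrum of the Gram matrix $\Delc = (H\X)(H\X)\tr$ and the singular values of $H\X$.

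For claim \ref{lem:gram-2}, I will use the fact that the nonzero eigenvalues of $(H\X)(H\X)\tr$ are exactly the squares of the nonzero singular values of $H\X$. Since $\X$ has rank $p$ by \ref{assumption:compact} (the lower bound on $s_p(H\X/\sqrt n)$ is strictly positive), one has $\lambda_i(\Delc) = s_i(H\X)^2 = n\cdot s_i(H\X/\sqrt{n})^2$ for $i \in [p]$. Plugging in the two-sided bound $\tfrac{1}{\kappa} \le s_p(H\X/\sqrt{n}) \le s_1(H\X/\sqrt{n}) \le \kappa$ yields $n/\kappa^2 \le \lambda_p \le \cdots \le \lambda_1 \le \kappa^2 n$. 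Since $\L$ is diagonal with these entries, $\opnorm{\L} = \lambda_1 \le \kappa^2 n$ and $\opnorm{\L\inv} = 1/\lambda_p \le \kappa^2/n$.

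For claim \ref{lem:gram-3}, the identity $\opnorm{U} = 1$ is immediate from $U\tr U = I_p$. For the $\ell\ttinft$ bound I will exploit the relation in \cref{eq:cs-exact-recovery}, which gives $H\X = U\L^{1/2}Q$ for some $Q \in \orth{p}$, and hence $U = (H\X)Q\tr \L^{-1/2}$. Applying the submultiplicative property \cref{eq:ttinf-ABC} of \cref{prop:cape-ttinf} with $A = I$ gives
\begin{align}
    \ttinf{U} = \ttinf{(H\X)Q\tr \L^{-1/2}} \le \ttinf{H\X}\cdot \opnorm{Q\tr \L^{-1/2}} \le \Rx \cdot \opnorm{\L^{-1/2}} \le \frac{\kappa \Rx}{\sqrt{n}},
\end{align}
where the last step uses claim \ref{lem:gram-2}.

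There is no real obstacle here; everything reduces to linear algebra and the matching of singular values with eigenvalues of the Gram matrix. The only mild subtlety is to remember that the $Q$ appearing in \cref{eq:cs-exact-recovery} is an orthogonal matrix (so $\opnorm{Q\tr\L^{-1/2}} = \opnorm{\L^{-1/2}}$), which is exactly the statement right after that equation in the paper.
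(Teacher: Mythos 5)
Your proposal is correct and follows essentially the same route as the paper: identify the eigenvalues of $\Delc$ with the squared singular values of $H\X$ and invoke \ref{assumption:compact} for claim \ref{lem:gram-2}, then use \cref{eq:cs-exact-recovery} together with the submultiplicativity of $\norm{\cdot}\ttinft$ from \cref{prop:cape-ttinf} for claim \ref{lem:gram-3}. The only cosmetic difference is that the paper bounds $\ttinf{U\L^{1/2}}\le \Rx$ first and then multiplies by $\L^{-1/2}$, whereas you write $U = (H\X)Q\tr\L^{-1/2}$ and apply the inequality in one step; the two are equivalent.
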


\begin{proof} 
    By assumption, $H\X$ is a rank-$p$ matrix, and $\L \equiv \L((H\X)(H\X)\tr) = \L((H\X)\tr(H\X))$ is also rank-$p$, from assumption~\ref{assumption:compact} it follows that
    \begin{align}
        \frac{n}{\kappa^2} \le \lambda_p \le \cdots \le \lambda_1 = \opnorm{\L} \le  \kappa^2 n \qq{and, therefore,} \opnorm{\L\inv} = \frac{1}{\lambda_p} \le \frac{\kappa^2}{ n}.\FINEQ
    \end{align}
    From \cref{eq:cs-exact-recovery} we have $U\L^{1/2} = H\X Q\tr$ for a matrix $Q \in \orth{p}$. Note that $U \in \Rnp$ has orthonormal columns, therefore $\opnorm{U}=1$. For the first part, using \cref{eq:ttinf-ABC} in \cref{prop:cape-ttinf} and \ref{assumption:compact}, 
    \begin{align}
    \ttinf\big{U\L^{1/2}} = \ttinf\big{H\X Q\tr} = \ttinf\big{H\X} \opnorm\big{Q\tr} = \ttinf{H\X} \le \Rx,
    \end{align}
    and using \cref{prop:cape-ttinf} once again along with claim~\ref{lem:gram-2},
    \begin{align}
        \ttinf\big{U} = \ttinf\big{U\L^{1/2}\cdot\L^{-1/2}} \le \ttinf\big{U\L^{1/2}} \opnorm\big{\L^{-1/2}} \le \frac{\kappa\Rx}{\sqrt{n}},
    \end{align}
    which proves Claim~\ref{lem:gram-3}.
\end{proof}

For $\D = \Del + \Eps$, the next lemma provides bounds on the perturbation of the eigenvalues and the singular subspaces associated with $\Dc = -\half HDH$.
\newcommand{\keps}{K_\eps}
\begin{lemma}\label{lem:eigen}
    Suppose $\X \in \bbX(\xpar)$ satisfying \ref{assumption:compact} and $\D = \Del + \Eps$ in the noisy realizable setting of \cref{noisy-setting}. Let $\Delc=U\L U\tr$ and $\Dc = \hU\hL\hU\tr$ be the rank-$p$ spectral decompositions of $\Dc$ and $\Delc$, respectively, and let $Q_* = \argmin_{P \in \orth{p}}\smallnorm{U\tr\hU - P}_F$ be the Procrustes alignment matrix. Then, on the event $A := \qty{ \opnorm{\Dc - \Delc} \le \keps\sqrt{n} }$ for $\keps > 0$, the following hold simultaneously:
    \begin{enumerate}[label=\textup{(\roman*)}]
        \item\label{lem:eigen-1} $\opnorm\big{\hL} \lesssim  \kappa^2n$ and $\opnorm\big{\hL\inv} \lesssim \kappa^2/ n$
        \item\label{lem:eigen-2} $\opnorm\big{\hU - UQ_*} \lesssim \keps \kappa^2/ \sqrt{n}$ and $\opnorm\big{U\tr\hU - Q_*} \lesssim \keps^2\kappa^4 / n$
        \item\label{lem:eigen-3} $\opnorm\big{Q_*\hL^{-1/2} - \L^{-1/2}Q_*} \lesssim \keps \kappa^9/ n$.
    \end{enumerate}
\end{lemma}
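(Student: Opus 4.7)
The plan is to establish the three claims in sequence on the event $A$, assuming $n$ is large enough that $\keps\sqrt{n} \le \lambda_p/2$; by \cref{lem:gram}\,\ref{lem:gram-2} this amounts to $n \gtrsim \keps^2\kappa^4$. Claim \ref{lem:eigen-1} is a direct consequence of Weyl's inequality for Hermitian matrices: since $\opnorm{\Dc - \Delc} \le \keps\sqrt{n}$ on $A$, one has $|\hat\lambda_i - \lambda_i| \le \keps\sqrt{n}$ for every $i$. Combined with $\lambda_1 \le \kappa^2 n$ and $\lambda_p \ge n/\kappa^2$ from \cref{lem:gram}, the size condition on $n$ yields $\hat\lambda_1 \le 2\kappa^2 n$ and $\hat\lambda_p \ge n/(2\kappa^2)$, giving both estimates in \ref{lem:eigen-1}.

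For \ref{lem:eigen-2} the plan is to invoke the Davis-Kahan $\sin\Theta$ theorem. Since $\Delc$ has rank $p$ with $\lambda_p \ge n/\kappa^2$ and, by Weyl, the $(p{+}1)$-th eigenvalue of $\Dc$ is at most $\keps\sqrt{n}$, the eigengap between the top-$p$ block of $\Dc$ and the orthogonal complement of $U$ is at least $\lambda_p - \keps\sqrt{n} \gtrsim n/\kappa^2$. Davis-Kahan then gives
\[
\opnorm{\sin\Theta(\hU,U)} \;\lesssim\; \frac{\opnorm{\Dc - \Delc}}{n/\kappa^2} \;\lesssim\; \frac{\keps\kappa^2}{\sqrt{n}}.
\]
Writing $U\tr\hU = W_1 S W_2\tr$ in SVD form so that $Q_* = W_1 W_2\tr$ and $S = \diag(\cos\theta_i)$, a standard Procrustes computation yields $\opnorm{\hU - UQ_*} \le \sqrt{2}\,\opnorm{\sin\Theta}$, and the elementary inequality $1 - \cos\theta_i \le \sin^2\theta_i$ (valid for $\theta_i \in [0,\pi/2]$) gives $\opnorm{U\tr\hU - Q_*} = \max_i(1 - \cos\theta_i) \le \opnorm{\sin\Theta}^2$, establishing both bounds in \ref{lem:eigen-2}.

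Claim \ref{lem:eigen-3} is the most delicate and I expect to be the main obstacle. Setting $\Theta := U\tr\hU$, I would first use \ref{lem:eigen-2} to replace $Q_*$ by $\Theta$ at cost
\[
\opnorm{Q_* - \Theta}\cdot\bigl(\opnorm{\hL^{-1/2}} + \opnorm{\L^{-1/2}}\bigr) \;\lesssim\; \frac{\keps^2\kappa^4}{n}\cdot\frac{\kappa}{\sqrt{n}} \;=\; \frac{\keps^2\kappa^5}{n^{3/2}},
\]
which is lower order. It then remains to bound $\opnorm{\Theta\hL^{-1/2} - \L^{-1/2}\Theta}$. Left-multiplying $\Dc\hU = \hU\hL$ by $U\tr$ and using $\Delc = U\L U\tr$ yields the Sylvester-type identity
\[
\Theta\hL - \L\Theta \;=\; U\tr(\Dc - \Delc)\hU, \qquad \opnorm{\Theta\hL - \L\Theta} \le \keps\sqrt{n}.
\]
Introducing $F := \L^{1/2}\Theta - \Theta\hL^{1/2}$, a direct expansion verifies $\L^{1/2}F + F\hL^{1/2} = \L\Theta - \Theta\hL$, so $F$ solves a Sylvester equation with positive definite coefficients. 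The standard bound, via the integral representation $F = \int_0^\infty e^{-t\L^{1/2}}(\L\Theta - \Theta\hL)\,e^{-t\hL^{1/2}}\,dt$, then gives
\[
\opnorm{F} \;\le\; \frac{\opnorm{\L\Theta - \Theta\hL}}{\sqrt{\lambda_p} + \sqrt{\hat\lambda_p}} \;\lesssim\; \frac{\keps\sqrt{n}}{\sqrt{n}/\kappa} \;=\; \keps\kappa.
\]
Finally, the factorization $\Theta\hL^{-1/2} - \L^{-1/2}\Theta = \L^{-1/2}F\hL^{-1/2}$ combined with $\opnorm{\L^{-1/2}}, \opnorm{\hL^{-1/2}} \lesssim \kappa/\sqrt{n}$ gives $\opnorm{\Theta\hL^{-1/2} - \L^{-1/2}\Theta} \lesssim \keps\kappa^3/n$, which absorbs the earlier replacement error and fits comfortably within the advertised $\keps\kappa^9/n$. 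The main difficulty is recognizing that the ``square-root commutator'' $F$ itself satisfies a Sylvester equation with positive definite coefficients; this is what turns the $O(\keps\sqrt{n})$ spectral perturbation into the desired $O(\keps\kappa^3/n)$ control on the $\hL^{-1/2}$-commutator.
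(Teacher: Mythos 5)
Your proposal is correct, and parts \textup{(i)}–\textup{(ii)} follow essentially the paper's own route: Weyl's inequality for the eigenvalue bounds, then Davis--Kahan plus the standard SVD identities for the Procrustes matrix $Q_*$ (the paper outsources the latter to Lemmas~6.7--6.9 of \citet{cape2019two}, whereas you verify $\opnorm{\hU - UQ_*}\le\sqrt{2}\,\opnorm{\sin\Theta}$ and $\opnorm{U\tr\hU - Q_*}=\max_i(1-\cos\theta_i)\le\opnorm{\sin\Theta}^2$ directly; both are fine, and your explicit standing assumption $n\gtrsim\keps^2\kappa^4$ plays the role of the paper's implicit ``for sufficiently large $n$''). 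Part \textup{(iii)} is where you genuinely diverge, and your argument works. The paper starts from the same Sylvester-type identity $U\tr\hU\,\hL-\L\,U\tr\hU=U\tr(\Dc-\Delc)\hU$, but it transfers the commutator to $Q_*$ using part \textup{(ii)} (which costs factors $\opnorm{\hL},\opnorm{\L}\asymp\kappa^2 n$ and yields $\opnorm{Q_*\hL-\L Q_*}\lesssim\keps\kappa^6\sqrt{n}$) and then converts differences of $\hL^{-1/2}$ into differences of $\hL^{1/2}$ via Bhatia's square-root perturbation bound together with the factorization $A^{-1/2}-B^{-1/2}=A^{-1/2}(B^{1/2}-A^{1/2})B^{-1/2}$; this is what produces its $\keps\kappa^9/n$. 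You instead keep $\Theta=U\tr\hU$, note that the square-root commutator $F=\L^{1/2}\Theta-\Theta\hL^{1/2}$ solves $\L^{1/2}F+F\hL^{1/2}=\L\Theta-\Theta\hL$ with positive definite coefficients (positive definiteness of $\hL$ is guaranteed by \textup{(i)} on the event $A$ under your size condition), bound $\opnorm{F}\le\opnorm{\L\Theta-\Theta\hL}/(\sqrt{\lambda_p}+\sqrt{\hat\lambda_p})$ via the integral representation, and conclude through $\Theta\hL^{-1/2}-\L^{-1/2}\Theta=\L^{-1/2}F\hL^{-1/2}$, handling the replacement $Q_*\to\Theta$ only at the level of $\hL^{-1/2}$, where it costs $\keps^2\kappa^5/n^{3/2}$ and is lower order under $n\gtrsim\keps^2\kappa^4$. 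The net effect is a bound of order $\keps\kappa^3/n$, which implies (and sharpens in $\kappa$) the stated $\keps\kappa^9/n$. The trade-off: the paper's route uses only citable off-the-shelf ingredients (Yu--Wang--Samworth's Davis--Kahan, which needs no gap condition on $\Dc$, and Bhatia's Theorem~X.3.8), while your Sylvester/integral-representation argument requires the explicit largeness condition but avoids the detour through $Q_*\hL-\L Q_*$ and buys a visibly cleaner dependence on $\kappa$.
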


\begin{proof}[Proof of \cref{lem:eigen}~\ref{lem:eigen-1}.]
    Using Weyl's inequality \citep[Theorem~VI.2.1]{bhatia2013matrix}, on the event $A$ we have that
    $
    \opnorm\big{\hL-\L} \le \opnorm{\Dc-\Delc} \le \keps \sqrt{n},\label{eq:weyl-hat-L}
    $
    and, using \cref{lem:gram}\,(ii), for sufficiently large $n$ it follows that
    \begin{align}
        \opnorm\big{\hL} \le \opnorm{\L} + \opnorm{\Dc-\Delc} \le  \kappa^2n + \keps\sqrt{n} \lesssim  \kappa^2n.
    \end{align}
    For $\opnorm\big{\hL}$, using \cref{lem:gram}\,\ref{lem:gram-2},
    \begin{align}
        \opnorm\big{\hL\inv} 
        = \frac{1}{\h\lambda_p}
        \le \frac{1}{\lambda_p \cdot (1 - \abs\big{\h\lambda_p - \lambda_p})} = \frac{1}{\lambda_p} \qty( 1 - {\frac{\abs\big{\h\lambda_p-\lambda_p}}{\lambda_p}} )\inv 
        &\le \frac{1}{\lambda_p} \qty( 1 - {\frac{\opnorm{\Dc-\Delc}}{\lambda_p}} )\inv\\
        &\le \frac{\kappa^2}{n} \qty( 1 - {\frac{\keps\kappa^2 \sqrt{n}}{n}} )\inv.
    \end{align}
    For sufficiently large $n$, $\keps\kappa^2/\sqrt{n} < 1$, and using the identity $(1-x)\inv = 1 + x + o(x)$ for $\abs{x} < 1$
    implies that
    \begin{align}
        \opnorm\big{\hL\inv} \le \frac{\kappa^2}{n}\qty( 1 + \frac{\keps\kappa^2}{ \sqrt{n}} + o\qty(\frac{1}{\sqrt{n}}) ) = \frac{\kappa^2}{n}\qty\big( 1 + o(1) ) \le \frac{2\kappa^2}{n}.\FINEQ
    \end{align}
    
    \noindent\textit{Proof of \cref{lem:eigen}~\ref{lem:eigen-2}.} Let $\Theta(\hU, U)$ be the $p\times p$ diagonal matrix where each diagonal entry is the principal angle between the corresponding columns of $\hU$ and $U$. Using the variant of the Davis-Kahan $\sin\Theta$ theorem by \cite{yu2015useful}, and as recast in \citet[Theorem~6.9]{cape2019two}:
    \begin{align}
        \opnorm\big{\sin\Theta(\hU, U)} \le \frac{2\opnorm{\Dc-\Delc}}{\lambda_{\text{gap}}} = \frac{2\opnorm{\Dc-\Delc}}{\lambda_p} \lesssim \frac{\keps\kappa^2}{\sqrt{n}}.\label{eq:davis-kahan}
    \end{align}
    The proof of the claim now uses \cref{eq:davis-kahan} with the singular subspace perturbation bounds from \citet[Section~6.2]{cape2019two}. In particular, \citet[Lemma~6.8]{cape2019two} yields
    \begin{align}
        \opnorm\big{\hU - UQ_*} 
        &\lesssim \opnorm\big{\sin\Theta(\hU, U)} + \opnorm\big{\sin\Theta(\hU, U)}^2 \lesssim \frac{\keps\kappa^2}{\sqrt{n}},
    \end{align}
    and using \citet[Lemma~6.7]{cape2019two},
    \begin{align}
        \opnorm\big{U\tr\hU - Q_*} 
        &\le \opnorm\big{\sin\Theta(\hU, U)}^2 \lesssim \frac{\keps^2\kappa^4}{{n}}.\FINEQ
    \end{align}
    
    \noindent\textit{Proof of \cref{lem:eigen}~\ref{lem:eigen-3}.} For $Q \in \orth{p}$ let $A := \hL$ and $B := Q\tr\L Q$. We can write
    \begin{align}
        Q\hL^{-\half} - \L^{-\half}Q = Q\hL^{-\half} - QQ\tr\L^{-\half}Q = Q\qty(\hL^{-\half} - Q\tr\L^{-\half}Q) = Q(A^{-\half} - B^{-\half}).\label{eq:Q-factoring}
    \end{align}
    Using the identity $A^{-\half} - B^{-\half} = A^{-\half}(B^{\half} - A^{\half})B^{-\half}$,
    \begin{align}
        \opnorm\big{Q\hL^{-\half} - \L^{-\half}Q} = \opnorm\big{A^{-\half} - B^{-\half}} \le \opnorm\big{A^{-\half}} \cdot \opnorm\big{B^{\half} - A^{\half}} \cdot \opnorm\big{B^{-\half}} \lesssim \frac{\kappa^2}{n}\opnorm\big{B^{\half} - A^{\half}},\label{eq:Q-2}
    \end{align}
    where, we used the fact that $\opnorm\big{A^{-\half}} \lesssim \kappa/\sqrt{n}$, from claim~\ref{lem:eigen-2} and $\opnorm\big{B^{-\half}} = \opnorm\big{\L^{-\half}} \le \kappa/\sqrt{n}$ from \cref{lem:gram}~\ref{lem:gram-2}. Moreover, by noting that 
    $
    \min\qty{\lambda_p({A}), \lambda_p({B})} = \min\qty\big{\lambda_p({\hL}), \lambda_p({\L})} \gtrsim \frac{n}{\kappa^2},
    $
    and using Theorem~X.3.8~and~Eq.~(X.46)~from~\citet{bhatia2013matrix}, we have
    \begin{align}
        \opnorm\big{B^{\half} - A^{\half}} \le \half \qty(\frac{n}{\kappa^2})^{-\half}\opnorm{B - A} .
    \end{align}
    Plugging this bound back into \cref{eq:Q-2} and using the fact that $\opnorm{B - A} = \opnorm\big{Q\hL - \L Q}$ as in \cref{eq:Q-factoring}, we get
    \begin{align}
        \opnorm\big{Q\hL^{-\half} - \L^{-\half}Q} \lesssim \frac{\kappa^3}{n^{3/2}}\opnorm\big{Q\hL - \L Q}.\label{eq:Q-3}
    \end{align}
    Finally, for the Procrustes alignment matrix $Q_* \in \orth{p}$, we can write
    \begin{align}
        Q_*\hL - \L Q_* 
        &= (Q_* - U\tr\hU + U\tr\hU)\hL - \L(Q_* - U\tr\hU + U\tr\hU)\\
        &= (Q_* - U\tr\hU)\hL - \L(Q_* - U\tr\hU) + U\tr\hU\hL - \L U\tr\hU\\
        &= (Q_* - U\tr\hU)\hL - \L(Q_* - U\tr\hU) + U\tr\Dc\hU - U\Delc\hU\\
        &= (Q_* - U\tr\hU)\hL - \L(Q_* - U\tr\hU) + U\tr(\Dc-\Delc)\hU,
    \end{align}
    where the second line follows by noting that $\Dc\hU = \hU\hL$ from the spectral decomposition of $\Dc$, and similarly for $\Delc$. By the triangle inequality, and using the bound from claim~\ref{lem:eigen-2},
    \begin{align}
        \opnorm\big{Q_*\hL - \L Q_* } &\le \opnorm\big{Q_* - U\tr\hU} \opnorm\big{\hL} + \opnorm\big{Q_* - U\tr\hU} \opnorm\big{\L} + \opnorm\big{U\tr(\Dc-\Delc)\hU}\\
        &\lesssim \qty( \frac{\keps\kappa^4}{ n} \cdot  \kappa^2n ) + \qty(  \kappa^2n \cdot \frac{\keps\kappa^4}{ n} ) + \keps\sqrt{n}\\ 
        &\lesssim {\keps\kappa^6}{\sqrt{n}}.
    \end{align}
    Plugging this back into \cref{eq:Q-3} gives $\opnorm\big{Q_*\hL - \L Q_*} \lesssim \keps \kappa^9 / n$ and completes the proof of the claim.
\end{proof}


\endgroup
\end{document}